\theoremstyle{plain}
\numberwithin{equation}{section}
\newcommand{\im}{{\sqrt{-1}}}
\newtheorem{definition}{Definition}[section]
\newtheorem{theorem}[definition]{Theorem}
\newtheorem*{theorem*}{Theorem}
\newtheorem{remark}[definition]{Remark}
\newtheorem*{remark*}{Remark}
\newtheorem*{sideremark*}{Side Remark}
\newtheorem*{claim*}{Claim}
\newtheorem*{q*}{Question}
\newtheorem{lemma}[definition]{Lemma}
\newtheorem{corollary}[definition]{Corollary}
\newtheorem*{corollary*}{Corollary}
\newtheorem{proposition}[definition]{Proposition}
\newtheorem{notation}[definition]{Notation}
\newcommand{\red}[1]{\textcolor{red}{#1}}
\newcommand{\CC}{{\mathscr{C}}}
\newcommand{\R}{\mathbb{R}}
\newcommand{\na}{\nabla}
\newcommand{\emb}{\hookrightarrow}
\newcommand{\p}{\partial}
\newcommand{\loc}{{\rm loc}}
\newcommand{\weak}{\rightharpoonup}
\newcommand{\e}{\varepsilon}
\newcommand{\C}{\mathbb{C}}
\newcommand{\dd}{{\rm d}}
\newcommand{\dvg}{{\,\dd {\rm vol}_g^X}}
\newcommand{\G}{\Gamma}
\newcommand{\two}{{\rm II}}
\newcommand{\M}{{\mathcal{M}}}
\newcommand{\bra}{\left\langle}
\newcommand{\ket}{\right\rangle}
\newcommand{\proj}{\mathbf{\Pi}^{\dd}}
\newcommand{\poly}{{\mathscr{P}}}
\newcommand{\sol}{{\mathfrak{S}}}
\newcommand{\so}{{\mathfrak{so}}}
\newcommand{\sonk}{{\so(N+k)}}
\newcommand{\bara}{{\overline{\alpha}}}
\newcommand{\barb}{{\overline{\beta}}}
\newcommand{\ppl}{{\sigma_{\bf ppl}}}
\newcommand{\bart}{{\overline{\theta}}}
\newcommand{\rnk}{{\R^{N+k}}}
\def\XXint#1#2#3{{\setbox0=\hbox{$#1{#2#3}{\int}$ }
\vcenter{\hbox{$#2#3$ }}\kern-.6\wd0}}
\title{A wedge product theorem of compensated compactness with critical exponents on Riemannian manifolds} 
\author{Xiaojin Bai}
\address{Xiaojin Bai: School of Mathematical Sciences, Shanghai Jiao Tong University, No.~6 Science Buildings,
800 Dongchuan Road, Minhang District, Shanghai, China (200240)}
\email{\texttt{baileymoon@sjtu.edu.cn}}
\author{Siran Li}
\address{Siran Li: School of Mathematical Sciences $\&$ CMA-Shanghai, Shanghai Jiao Tong University, No.~6 Science Buildings,
800 Dongchuan Road, Minhang District, Shanghai, China (200240)}
\email{\texttt{siran.li@sjtu.edu.cn}}
\author{Xiangxiang Su}
\address{Xiangxiang Su: School of Mathematical Sciences, Shanghai Jiao Tong University, No.~6 Science Buildings,
800 Dongchuan Road, Minhang District, Shanghai, China (200240)}
\email{\texttt{sjtusxx@sjtu.edu.cn}}
\keywords{Compensated compactness; weak continuity; differential form; div-curl lemma; critical exponents; isometric immersion.}
\subjclass[2020]{58C07; 53C42}
\date{\today}
\begin{document}

\begin{abstract}
We formulate and prove compensated compactness theorems concerning the limiting behaviour of wedge products of weakly convergent differential forms on closed Riemannian manifolds \emph{\`{a} la} Robbin--Rogers--Temple [\textit{Trans. Amer. Math. Soc.} \textbf{303} (1987), 609--618]. The case of critical regularity exponents is considered, which generalises the div-curl lemma in Briane--Casado-D\'{i}az--Murat [\textit{J. Math. Pures Appl.} \textbf{91} (2009), 476--494] for vectorfields, thus going beyond the regularity regime entailed by H\"{o}lder's inequality. Implications on the weak continuity of Gauss--Codazz--Ricci equations and $L^p$-extrinsic geometry of isometric immersions of Riemannian manifolds are discussed.
\end{abstract}
\maketitle

\section{Introduction}\label{sec: intro}

We establish compensated compactness theorems with critical exponents for wedge products of differential forms on closed manifolds. Our work generalises and is, in fact, primarily motivated by the works of Robbin--Rogers--Temple \cite{key1} and Briane--Casado-D\'{i}az--Murat \cite{bcm}. Throughout, by closed manifold we mean a compact Riemannian manifold with no boundary.

Compensated compactness has played an important role in nonlinear analysis, especially in nonlinear PDEs arising from fluid mechanics (\emph{cf}. DiPerna \cite{dip}, C. Dafermos \cite{dafermos}), nonlinear elasticity (\emph{cf}. Re\u{s}etnjak \cite{res}, Ball \cite{ball}, Murat \cite{mur1}, M\"{u}ller \cite{mul}), and geometric analysis (\emph{cf}. H\'{e}lein \cite{h}), as well as calculus of variations (\emph{cf}. Tartar \cite{tar1, tar2}, Ball--Currie--Olver \cite{bco},  Fonseca--Leoni--Mal\'{y} \cite{flm}). The above list of references is by no means exhaustive; we refer to the monograph by Evans \cite{evans} and the recent survey \cite{chen} by Chen for further details.

The \emph{div-curl lemma} introduced by Murat \cite{mur1, mur2, mur3} and Tartar \cite{tar1, tar2} marks a cornerstone of the compensated compactness theory. It ascertains that a nonlinear functional of weakly convergent sequences will converge in coarse topologies when specific derivatives of these sequences are compact in natural function spaces. The nonlinear functional and differential constraints in consideration need to satisfy some conditions that are essentially algebraic in nature. In the simplest and original form, the div-curl lemma reads as follows:
\begin{lemma}\label{lem: div-curl}
Let $\{u^n\}$, $\{v^n\}$ be sequences of vectorfields in $L^2_\loc(\R^3,\R^3)$ such that $u^n \weak \bar{u}$ and $v^n \weak \bar{v}$  in $L^2_\loc(\R^3,\R^3)$. Assume  $\left\{{\rm div}(u^n)\right\}$ and $\left\{{\rm curl}(v^n)\right\}$ are precompact in $W^{-1,2}_\loc(\R^3,\R)$ and  $W^{-1,2}_\loc(\R^3,\R^3)$, respectively. Then $u^n \cdot v^n$ converge to $\bar{u} \cdot \bar{v}$ in the sense of distributions. 
\end{lemma}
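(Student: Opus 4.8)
The plan is to give the classical Fourier-analytic argument going back to Tartar, in the three steps of localisation, passage to Fourier variables, and a high/low frequency split.

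\emph{Step 1 (localisation).} Since distributional convergence only requires testing against $\phi\in C^\infty_c(\R^3)$, and $\int_{\R^3}u^n\cdot v^n\,\phi\,\dd x=\int_{\R^3}u^n\cdot(\phi v^n)\,\dd x$, I would first replace $(u^n,v^n)$ by $(\chi u^n,\phi v^n)$, where $\chi\in C^\infty_c$ equals $1$ on $\mathrm{supp}\,\phi$. This preserves all hypotheses: $\phi v^n\weak\phi\bar v$ in $L^2_\loc$, and $\mathrm{curl}(\phi v^n)=\phi\,\mathrm{curl}(v^n)+\na\phi\times v^n$, whose first summand is still precompact in $W^{-1,2}$ (multiplication by $\phi$ is bounded on $W^{-1,2}$, hence carries precompact sets to precompact sets) and whose second summand is bounded in $L^2$ and compactly supported, hence precompact in $W^{-1,2}$ because $L^2\emb W^{-1,2}$ is compact on bounded domains; similarly for $\chi u^n$ with $\mathrm{div}$ in place of $\mathrm{curl}$. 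Thus I may assume henceforth that $u^n,v^n\in L^2(\R^3,\R^3)$ are bounded in $L^2$, supported in one fixed ball, with $\mathrm{div}(u^n)$ and $\mathrm{curl}(v^n)$ precompact in $W^{-1,2}(\R^3)$, and the goal becomes $\int_{\R^3}u^n\cdot v^n\,\dd x\to\int_{\R^3}\bar u\cdot\bar v\,\dd x$.

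\emph{Step 2 (Fourier variables).} By Plancherel's theorem, up to a fixed normalising constant $\int_{\R^3}u^n\cdot v^n\,\dd x=\int_{\R^3}\langle\widehat{u^n}(\xi),\widehat{v^n}(\xi)\rangle_{\C^3}\,\dd\xi$ with $\langle a,b\rangle_{\C^3}=\sum_j a_j\overline{b_j}$. For $\xi\neq 0$ decompose $\widehat{u^n}(\xi)=\alpha^n(\xi)\tfrac{\xi}{|\xi|}+\beta^n(\xi)$ and $\widehat{v^n}(\xi)=\gamma^n(\xi)\tfrac{\xi}{|\xi|}+\delta^n(\xi)$ with $\beta^n,\delta^n$ orthogonal to $\xi$; since $\xi/|\xi|$ is real the cross terms vanish, and the integrand equals $\alpha^n\overline{\gamma^n}+\langle\beta^n,\delta^n\rangle_{\C^3}$. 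Now $\alpha^n(\xi)$ is $|\xi|^{-1}\widehat{\mathrm{div}(u^n)}(\xi)$ up to a unimodular factor and $|\xi|\,|\delta^n(\xi)|=|\widehat{\mathrm{curl}(v^n)}(\xi)|$; since $f\mapsto(1+|\xi|^2)^{-1/2}\widehat f$ is, up to a fixed constant, an isometry of $W^{-1,2}(\R^3)$ onto $L^2$, the two compactness hypotheses say exactly that $\tfrac{|\xi|}{(1+|\xi|^2)^{1/2}}\alpha^n$ and $\tfrac{|\xi|}{(1+|\xi|^2)^{1/2}}\delta^n$ are precompact in $L^2$. Meanwhile $\widehat{u^n}\weak\widehat{\bar u}$, $\widehat{v^n}\weak\widehat{\bar v}$ weakly in $L^2$, so — multiplication by a bounded function being weak-to-weak continuous — $\alpha^n,\beta^n,\gamma^n,\delta^n$ also converge weakly in $L^2$, say to $\alpha,\beta,\gamma,\delta$.

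\emph{Step 3 (high and low frequencies).} On $\{|\xi|\geq 1\}$ the weight $\tfrac{|\xi|}{(1+|\xi|^2)^{1/2}}$ is bounded below, so $\alpha^n$ and $\delta^n$ are themselves precompact in $L^2(\{|\xi|\geq1\})$; passing to a subsequence along which they converge strongly there and pairing against the weakly convergent $\gamma^n,\beta^n$, the integral over $\{|\xi|\geq 1\}$ tends to $\int_{\{|\xi|\geq1\}}(\alpha\overline\gamma+\langle\beta,\delta\rangle_{\C^3})\,\dd\xi$. On $\{|\xi|\leq 1\}$ I exploit the compact support: differentiating under the integral sign gives $|\widehat{u^n}(\xi)|\leq\|u^n\|_{L^1}\leq C$ and $|\na_\xi\widehat{u^n}(\xi)|\leq C$, and likewise for $v^n$, so $\{\widehat{u^n}\},\{\widehat{v^n}\}$ are uniformly bounded and uniformly Lipschitz; by the Arzel\`{a}--Ascoli theorem they converge uniformly on $\{|\xi|\leq1\}$, and the limits are forced to be $\widehat{\bar u},\widehat{\bar v}$ (the weak $L^2$ limits), hence they converge strongly in $L^2(\{|\xi|\leq1\})$ and the integral over $\{|\xi|\leq1\}$ passes to the limit outright. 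Adding the two contributions and invoking the orthogonal decomposition once more identifies the limit as (the same constant times) $\int_{\R^3}\langle\widehat{\bar u},\widehat{\bar v}\rangle_{\C^3}\,\dd\xi=\int_{\R^3}\bar u\cdot\bar v\,\dd x$. As the argument applies verbatim to any subsequence of the original sequence and always yields this limit, the full sequence converges, which proves the lemma.

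\emph{Main obstacle.} The one genuinely delicate point is the low-frequency region: $\mathrm{div}$ and $\mathrm{curl}$ control $\widehat{u^n}$ and $\widehat{v^n}$ only after multiplication by $|\xi|$ and thus carry no information as $\xi\to0$, so the ``weak times strong'' mechanism powering the rest of the proof is unavailable there. The localisation of Step 1 is precisely what rescues this, through the equicontinuity of the Fourier transforms of uniformly $L^2$-bounded, uniformly compactly supported fields — the very same observation that, applied to scalar functions, gives the compact embedding $L^2\emb W^{-1,2}$ on bounded domains used in Step 1. Everything else is soft: Plancherel, an orthogonal splitting, and the elementary passage to the limit in the pairing of a bounded sequence with a strongly convergent one.
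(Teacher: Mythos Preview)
Your argument is correct. The paper does not actually supply a proof of this lemma: it is quoted as the prototype div-curl result, and the surrounding discussion merely surveys the known approaches --- Murat--Tartar via Fourier transforms, Coifman--Lions--Meyer--Semmes via harmonic analysis, and Robbin--Rogers--Temple via the identity $u^n=(\mathrm{grad}\circ\mathrm{div}-\mathrm{curl}\circ\mathrm{curl})\Delta^{-1}u^n$. What you have written is precisely the first of these, carried out cleanly: the localisation, the orthogonal splitting of $\widehat{u^n},\widehat{v^n}$ along and across $\xi$, and the high/low frequency treatment (Arzel\`{a}--Ascoli for low frequencies being the standard remedy for the loss of the $|\xi|$ weight there) are all in order.

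It is worth noting the contrast with the route the paper itself takes for its generalisations (Theorems~\ref{thm: wedge} and~\ref{thm: endpoint}). There the Fourier transform is unavailable on a closed manifold, and the role of your frequency decomposition is played by the Hodge decomposition $\alpha^n=\dd\gamma_n+\xi^n$ of Proposition~\ref{prop: decomposition of diff forms in wedge prod}: the coexact piece $\xi^n$ converges strongly (this is the analogue of your ``$\alpha^n$ strong in $L^2$ from the div constraint''), while the exact piece $\dd\gamma_n$ is merely weakly convergent but pairs well after an integration by parts (your ``$\beta^n$ weak, $\delta^n$ strong''). Your approach is shorter and entirely self-contained for the Euclidean $L^2$ case; the Hodge route is what is needed once one leaves $\R^N$ or moves to the critical exponents $\frac{1}{p}+\frac{1}{q}=1+\frac{1}{N}$ where concentration compactness enters.
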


This result has been proved, reproved, and generalised since its first appearance. Murat and Tartar's  arguments \cite{mur1, mur2, mur3, tar1, tar2} utilised Fourier transforms. Two harmonic analytic proofs were later provided in the seminal paper by Coifman--Lions--Meyer--Semmes \cite{clms}, which furthermore established that $\left\{u^n \cdot v^n\right\}$ in Lemma~\ref{lem: div-curl} is bounded in the Hardy space $\mathcal{H}^1_\loc$. From another perspective, Robbin--Rogers--Temple \cite{key1} first observed that, by writing 
\begin{align*}
u^n = \Delta \Delta^{-1}u^n = \left({\rm grad} \circ {\rm div} - {\rm curl} \circ {\rm curl} \right)\Delta^{-1}u^n,
\end{align*}
one may decompose $u^n$, and similarly for $v^n$, into a weakly convergent part and a strongly convergent part. The pairing of the two weakly convergent parts is shown to converge in the sense of distributions by employing the ellipticity of $\Delta$, the commutativity of $\Delta^{-1}$ with divergence and curl, as well as the first-order differential constraints in the assumption\footnote{For simplicity of presentations, here we do not state carefully the boundary conditions.}. This idea has been exploited to extend the div-curl lemma to more general domains (\emph{e.g.}, Riemannian manifolds) and differential operators (\emph{e.g.}, exterior differential $\dd$ and codifferential $\dd^*$, or elliptic complexes in greater generality), based on which a functional analytic framework for div-curl lemma has been developed. See Kozono--Yanagisawa \cite{ky}, Chen--Li \cite{cl1, cl2}, Waurick \cite{wau}, and Pauly \cite{pau}. 

In fact, Robbin--Rogers--Temple established a more general result \cite[Theorem~1.1]{key1} than Lemma~\ref{lem: div-curl}, formulated in terms of wedge products of weakly convergent differential forms. 

\begin{lemma}[(Multilinear) wedge product theorem; Theorem~1.1 in \cite{key1}]\label{lemma: RRT}
Let $\{\alpha_1^n, \ldots, \alpha_L^n\}_{n \in \mathbb{N}}$ be sequences of differential forms on $\Omega \subset \R^N$ of degrees $s_1, \ldots, s_L$, respectively; $\sum_{i=1}^L s_i \leq N$. Assume for each $i \in \{1,\ldots,L\}$ that
\begin{align*}
\alpha_i^n \weak \overline{\alpha_i} \text{ in } L^{p_i} \text{ with } \sum_{i=1}^L\frac{1}{p_i}=1
\end{align*}
and that
\begin{align*}
\left\{\dd \alpha_i^n\right\} \text{ lies in a compact subset of } W^{-1,p_i}_\loc.
\end{align*}
Then we have
\begin{align*}
\alpha_1^n \wedge \cdots \wedge \alpha_L^n \longrightarrow \overline{\alpha}_1 \wedge \cdots \wedge \overline{\alpha}_L \qquad \text{ in } \mathcal{D}'.
\end{align*}
\end{lemma}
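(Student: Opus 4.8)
The plan is to reduce the claim to a local one, decompose each $\alpha_i^n$ into a closed weakly convergent part plus an $L^{p_i}$-strongly convergent part via the Newtonian potential, expand the wedge product multilinearly, and dispatch the surviving ``all-closed'' term by an integration by parts together with an induction on $L$. To localize, fix a smooth compactly supported test form $\zeta$ of degree $N-\sum_i s_i$ and a cutoff $\rho\in C^\infty_c(\Omega)$ equal to $1$ near $\mathrm{supp}\,\zeta$; since $\bigwedge_i(\rho\alpha_i^n)=\rho^L\bigwedge_i\alpha_i^n$ agrees with $\bigwedge_i\alpha_i^n$ on $\mathrm{supp}\,\zeta$, I may assume each $\alpha_i^n$ is supported in a fixed ball --- the hypotheses persist, because $\dd(\rho\alpha_i^n)=\dd\rho\wedge\alpha_i^n+\rho\,\dd\alpha_i^n$ stays precompact in $W^{-1,p_i}_{\loc}$ ($\{\alpha_i^n\}$ bounded in $L^{p_i}_{\loc}$ is precompact in $W^{-1,p_i}_{\loc}$ by the compact embedding dual to Rellich--Kondrachov, and multiplication by $\rho$ is bounded on $W^{-1,p_i}$). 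Then, with the flat constant-coefficient operators $\dd,\dd^*$ and the componentwise Newtonian potential $\Delta^{-1}$ on $\R^N$ (which commute), I write
\begin{align*}
\rho\alpha_i^n=\phi_i^n+\psi_i^n,\qquad \phi_i^n:=\dd\dd^*\Delta^{-1}(\rho\alpha_i^n),\quad \psi_i^n:=\dd^*\Delta^{-1}\dd(\rho\alpha_i^n).
\end{align*}
Here $\phi_i^n$ is exact, so $\dd\phi_i^n=0$; and since $\dd^*\Delta^{-1}$ is a pseudodifferential operator of order $-1$, it sends the precompact set $\{\dd(\rho\alpha_i^n)\}\subset W^{-1,p_i}$ into a precompact set of $L^{p_i}$, so $\psi_i^n\to\overline{\psi_i}:=\dd^*\Delta^{-1}\dd(\rho\overline{\alpha_i})$ strongly in $L^{p_i}$, whence $\phi_i^n\weak\overline{\phi_i}:=\rho\overline{\alpha_i}-\overline{\psi_i}$ weakly in $L^{p_i}$.

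Expanding $\int_\Omega\zeta\wedge\bigwedge_i(\phi_i^n+\psi_i^n)$ gives a signed sum over $S\subseteq\{1,\dots,L\}$ of terms $\int_\Omega\zeta\wedge\big(\bigwedge_{i\notin S}\psi_i^n\big)\wedge\big(\bigwedge_{i\in S}\phi_i^n\big)$. If $S\neq\{1,\dots,L\}$ there is a strong factor: by the generalized H\"older inequality $\bigwedge_{i\notin S}\psi_i^n$ converges strongly in $L^q$, $1/q=\sum_{i\notin S}1/p_i$, while $\bigwedge_{i\in S}\phi_i^n$ is an at-most-$(L-1)$-fold wedge of \emph{closed} weakly convergent forms, to which the induction hypothesis applies (closed forms have $\{\dd(\cdot)\}=\{0\}$ precompact; append a constant $0$-form if $\sum_{i\in S}1/p_i<1$), and being bounded in $L^{q'}$ it converges weakly in $L^{q'}$; the strong--weak pairing makes this term converge to $\int_\Omega\zeta\wedge\big(\bigwedge_{i\notin S}\overline{\psi_i}\big)\wedge\big(\bigwedge_{i\in S}\overline{\phi_i}\big)$.

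The remaining term $\int_\Omega\zeta\wedge\phi_1^n\wedge\cdots\wedge\phi_L^n$, all $\phi_i^n$ closed, is the crux. With $\eta_1^n:=\dd^*\Delta^{-1}(\rho\alpha_1^n)$ one has $\phi_1^n=\dd\eta_1^n$ and, since $\dd^*\Delta^{-1}$ has order $-1$, $\{\eta_1^n\}$ is bounded in $W^{1,p_1}_{\loc}$, hence $\eta_1^n\to\overline{\eta_1}:=\dd^*\Delta^{-1}(\rho\overline{\alpha_1})$ strongly in $L^{p_1}_{\loc}$ by Rellich--Kondrachov. Using $\dd\phi_i^n=0$ ($i\geq2$) and the Leibniz rule in low regularity (justified by mollification, since $\dd$ of a mollified closed form vanishes), $\phi_1^n\wedge\cdots\wedge\phi_L^n=\dd(\eta_1^n\wedge\phi_2^n\wedge\cdots\wedge\phi_L^n)$, so Stokes' theorem gives $\int_\Omega\zeta\wedge\phi_1^n\wedge\cdots\wedge\phi_L^n=\pm\int_\Omega\dd\zeta\wedge\eta_1^n\wedge\phi_2^n\wedge\cdots\wedge\phi_L^n$. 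Here $\dd\zeta\wedge\eta_1^n\to\dd\zeta\wedge\overline{\eta_1}$ strongly in $L^{p_1}$ with compact support, while $\phi_2^n\wedge\cdots\wedge\phi_L^n$, an $(L-1)$-fold wedge of closed weakly convergent forms, converges in $\mathcal{D}'$ by induction and, being bounded in $L^{p_1'}$, converges weakly in $L^{p_1'}$; the strong--weak pairing converges, and unwinding via $\overline{\phi_1}=\dd\overline{\eta_1}$ and closedness of the $\overline{\phi_i}$ identifies the limit as $\int_\Omega\zeta\wedge\overline{\phi_1}\wedge\cdots\wedge\overline{\phi_L}$. Summing over $S$ and using $\bigwedge_i(\overline{\phi_i}+\overline{\psi_i})=\bigwedge_i(\rho\overline{\alpha_i})$ with $\rho\equiv1$ on $\mathrm{supp}\,\zeta$ closes the induction; the base case $L=1$ is immediate.

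I expect the genuine obstacle to be the \emph{endpoint exponents} $p_i\in\{1,\infty\}$, where Calder\'on--Zygmund bounds fail, so $\dd^*\Delta^{-1}$ no longer maps $L^1\to W^{1,1}$ or $W^{-1,\infty}\to L^\infty$ and one must substitute Hardy-space ($\mathcal{H}^1$) and $\mathrm{BMO}$ estimates (in the spirit of \cite{clms}) or argue by duality; moreover ``weak convergence in $L^1$'' has to be taken in the Dunford--Pettis sense, which luckily also furnishes the equi-integrability needed to upgrade $\mathcal{D}'$-convergence of the intermediate wedge products to testing against merely continuous compactly supported forms (required when $p_i'=1$). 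A lesser point is making the low-regularity Leibniz/Stokes step in the all-closed term rigorous without spoiling the closedness it relies on. In the open range $1<p_i<\infty$ all of this is routine.
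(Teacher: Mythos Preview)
Your proof is correct and follows essentially the same strategy as the original Robbin--Rogers--Temple argument, which the paper cites rather than reproves: decompose each form via $\Delta^{-1}$ into an exact weakly convergent part plus a strongly convergent part, expand multilinearly, and kill the all-exact term by pulling out a potential and integrating by parts. The paper's own Proposition~\ref{prop: decomposition of diff forms in wedge prod} and \S\ref{subsec: subcrit} carry out exactly this decomposition (on closed manifolds, for $L=2$), and the induction on $L$ you run is the standard device from \cite{key1}; your final paragraph on endpoints $p_i\in\{1,\infty\}$ is extraneous to the lemma as stated but your instincts there are sound.
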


In terms of applications, we find that such formulation is particularly convenient for weak continuity considerations of isometric immersions of (semi-)Riemannian manifolds. See \cite{cl1, cl2}, in which some generalisations of Lemma~\ref{lemma: RRT} are proved and exploited. This observation has been further extended to study weak continuity  of gauge equations \cite{cg, ls}.

The theory of compensated compactness, as well as its descendants and relatives of the div-curl lemma, remain an important topic of research in nonlinear analysis today. Some recent endeavours in this field are devoted to developing nonhomogeneous, endpoint, and fractional versions of div-curl lemmas (see \cite{daf, ls, bc, bcm, ms, brezis, grs}), refined characterisations of the structures --- especially, algebraic structures --- of compensated compactness quantities (see \cite{mur3, lmzw, zhou, io, mm, rin1}), and connections with broader contexts in geometric measure theory (see, \emph{e.g.}, \cite{sil}). The above list of references is by no means exhaustive.

\emph{Trente ans après} the introduction of the div-curl lemma by Murat--Tartar \cite{mur1, tar1}, Briane--Casado-D\'{i}az--Murat further proved in \cite{bcm} several endpoint div-curl-type theorems and applied them to study the $G$-convergence for unbounded monotone operators. The setting is as follows: Let $\{u^n\} \subset L^p$ and $\{v^n\}\subset L^q$ be two sequences of weakly convergent vectorfields on $\Omega \subset \R^N$ to limits $\bar{u}$ and $\bar{v}$, respectively, such that $|u^n - \bar{u}|^p$ and $|v^n - \bar{v}|^q$ both converge weakly-$\star$ as Radon measures. In addition, assume the strong convergence of ${\rm div} (u^n)$ and ${\rm curl} (v^n)$ in $W^{-1,q'}$ and $W^{-1,p'}$, respectively.\footnote{Note the ``twist'' of indices $p$, $q$ here.} Then the dot products $u^n \cdot v^n$ converge to $\bar{u} \cdot \bar{v} + \varpi$ in the sense of distributions, where $\varpi$ is a defect Radon measure that can be characterised in detail. A crucial point here is that the range of indices $(p,q)$ is $\frac{1}{p} + \frac{1}{q} \leq 1+\frac{1}{N}$, which goes beyond the ``subcritical'' case of $(p,q)$ entailed by H\"{o}lder's inequality as in Lemma~\ref{lemma: RRT}. This motivates us to generalise the wedge product theorem to ``critical'' indices.

Our main Theorem~\ref{thm: wedge} below extends Lemma~\ref{lemma: RRT}, the wedge product theorem of compensated compactness \emph{\`{a} la} Robbin--Rogers--Temple \cite{key1}, in the spirit of Briane--Casado-D\'{i}az--Murat \cite{bcm}. Here we focus on the bilinear case $L=2$. See Theorem~\ref{thm: multiple wedge prod} below for an extension for general $L$, namely the multilinear case. Meanwhile, Theorem~\ref{thm: wedge} also generalises \cite{bcm}, by way of extending the results therein from vectorfields on Euclidean domains to differential forms of arbitrary degree on closed manifolds.

Before stating the theorem, we first introduce some notations used throughout this work.
 
 \begin{notation}
For a closed (\emph{i.e.}, compact and boundaryless) smooth Riemannian manifold $X$, denote by $\dvg$ the Riemannian volume measure on $X$, and by $\M(X)$ the space of Radon measures on $X$. In view of the Riesz representation theorem, $\M(X)$ equipped with the total variation norm is isometrically isomorphic to $\left[\CC^0(X)\right]^*$ as a Banach space, topologised by the weak-$\star$ topology on $\left[\CC^0(X)\right]^*$. Write the limit of a sequence of Radon measures as $\M-\lim_{n \to \infty}$.

We write $\dd$ ($\wedge$, resp.) for the exterior differential (wedge product, resp.) on the space of differential forms, and $\dd^*$ for the codifferential, namely that the formal $L^2$-adjoint of $\dd$ taken with respect to the Riemannian metric. We reserve the symbol $\delta$ for Dirac delta measures. Also denote $\Delta:=\dd\dd^*+\dd^*\dd$, which is the Laplace--Beltrami operator. It differs from the Hodge Laplacian by a sign; for example, $\Delta = - \sum_{j=1}^N \p_{jj}$ on $\R^N$.

For a regularity scale $\mathcal{R} = W^{k,p}, \CC^{0,\alpha}, \CC^\infty, \M, \mathcal{D}' \cdots$ (as usual, $\mathcal{D}'$ denotes distributions), we shall designate $\mathcal{R}\left(X, \bigwedge^\ell T^*X\right)$ for the space of differential $\ell$-forms of regularity $\mathcal{R}$ on $X$. Lengths/moduli and inner products on $\bigwedge^\bullet T^*X$, unless specified otherwise, are always taken with respect to the Riemannian metric on $X$. A differential form $\alpha \in \mathcal{R}\left(X, \bigwedge^\bullet T^*X\right)$ is said to be exact (coexact, resp.) if and only if $\dd\alpha=0$ ($\dd^*\alpha=0$, resp.) in the sense of distributions.

We write $A \lesssim_{c_1, \ldots, c_n} B$ to mean the inequality $A \leq CB$, where the constant $C$ depends only on the parameters $c_1, \ldots, c_n$. Einstein's summation convention is adopted throughout.

\end{notation}

Now we may state our generalised bilinear wedge product theorem:
\begin{theorem}\label{thm: wedge}
Let $\left(X,g\right)$ be an $N$-dimensional closed Riemannian manifold; $N \geq 2$. Let $1<p,q<\infty$ be such that
\begin{equation}\label{p,q condition}
1 \leq \frac{1}{p} + \frac{1}{q} \leq 1+\frac{1}{N}.
\end{equation}
Consider sequences of differential forms $\left\{ \alpha^n \right\}\subset L^p\left(X, \bigwedge^{\ell_1} T^*X \right)$ and $\left\{ \beta^n \right\}\subset L^q\left(X, \bigwedge^{\ell_2} T^*X \right)$ satisfying the following conditions:
\begin{enumerate}
\item
$\alpha^n \weak \bara$ weakly in $L^p$;
\item
$\beta^n \weak \barb$ weakly in $L^q$;
\item
$\left| \alpha^n - \bara \right|^p\dvg \weak \mu$ weakly-$\star$ in $\M$;
\item
$\left| \beta^n - \barb \right|^q\dvg \weak \nu$ weakly-$\star$ in $\M$;
\item
$\dd\alpha^n \to \dd\bara$ strongly in $W^{-1,q'}$;
\item
$\dd \beta^n \to \dd\barb$ strongly in $W^{-1,p'}$. 
\end{enumerate}
Then we have the following convergence (modulo subsequences) in the sense of  distributions:
\begin{equation}\label{conclusion of wedge prod thm}
\alpha^n \wedge \beta^n \longrightarrow  \bara \wedge \barb + \sum_{k=1}^\infty \dd \left(v^k\delta_{x^k}\right) \qquad \text{in } \mathcal{D}',
\end{equation}
for some sequences of points $\left\{x^k\right\} \subset X$ and $(\ell_1+\ell_2-1)$-covectors $\left\{v^k\right\}$. Moreover,
\begin{align*}
\left|v^k\right| \lesssim_{p,q,N,X} \left[\mu\left(\left\{x^k\right\}\right)\right]^{\frac{1}{p}}\left[\nu\left(\left\{x^k\right\}\right)\right]^{\frac{1}{q}}.
\end{align*} 

If, in addition, $\frac{1}{p}+\frac{1}{q}<1+\frac{1}{N}$ in \eqref{p,q condition}, then all $v^k$ are zero.
 \end{theorem}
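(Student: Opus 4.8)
The plan is to adapt the Robbin--Rogers--Temple decomposition to the critical range, tracking carefully the concentration defect that arises. First I would use the Hodge decomposition on the closed manifold $X$ to write each form as an exact part, a coexact part, and a harmonic part; since $X$ is closed, the harmonic part is finite-dimensional and hence converges strongly, contributing nothing to the defect. For the coexact parts, applying $\Delta^{-1}$ (which commutes with $\dd$ and $\dd^*$ by ellipticity of the Hodge Laplacian) lets me write, schematically, $\alpha^n = \dd\dd^*\Delta^{-1}\alpha^n + \dd^*\dd\Delta^{-1}\alpha^n + (\text{harmonic})$, and similarly for $\beta^n$. The hypotheses (5)--(6) on the strong convergence of $\dd\alpha^n$, $\dd\beta^n$ in the negative Sobolev spaces, together with elliptic regularity, force the ``$\dd^*\dd\Delta^{-1}$'' pieces to converge strongly (in $L^p$, $L^q$ respectively, by the Calder\'on--Zygmund / Sobolev mapping properties of $\dd^*\dd\Delta^{-1}$), while the ``$\dd\dd^*\Delta^{-1}$'' pieces carry the weak convergence. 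In the pairing $\alpha^n\wedge\beta^n$, every product involving at least one strongly convergent factor passes to the limit in $\mathcal D'$ with no defect, by the usual weak-times-strong argument (here one does need $\frac1p+\frac1q\le 1+\frac1N$ so that, after the Sobolev gain from $\Delta^{-1}$, the relevant products are at least $L^1_{\loc}$ and the pairing against test forms makes sense). The only surviving term is $\dd\phi^n\wedge\dd\psi^n$ where $\phi^n=\dd^*\Delta^{-1}\alpha^n$, $\psi^n=\dd^*\Delta^{-1}\beta^n$, which equals $\dd(\phi^n\wedge\dd\psi^n)$ since $\dd\dd\psi^n=0$; thus $\alpha^n\wedge\beta^n - \bara\wedge\barb = \dd(\text{something}) + o(1)$ in $\mathcal D'$, and it remains to analyze the weak-$\star$ limit of $\phi^n\wedge\dd\psi^n$.

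Next I would extract the defect. Replacing $\alpha^n$ by $\alpha^n-\bara$ and $\beta^n$ by $\beta^n-\barb$ (the ``$\bara$ paired with $\beta^n$'' and ``$\alpha^n$ paired with $\barb$'' cross terms are weak-times-fixed and produce no defect), one is reduced to sequences converging weakly to zero, with $|\alpha^n|^p\dvg\weak\mu$ and $|\beta^n|^q\dvg\weak\nu$. The key structural input is that $\phi^n = \dd^*\Delta^{-1}\alpha^n$ is \emph{one derivative more regular} than $\alpha^n$: by Sobolev embedding $W^{1,p}\hookrightarrow L^{p^*}$ with $\frac{1}{p^*} = \frac1p - \frac1N$, the sequence $\{\phi^n\}$ is compact in $L^p$ — in fact in $L^r$ for every $r<p^*$ — so $\phi^n$ converges \emph{strongly} in $L^p$ to some limit $\phi$, and by the weak convergence $\alpha^n\weak 0$ one gets $\phi=0$, i.e. $\phi^n\to 0$ strongly in $L^p$ (and all subcritical $L^r$). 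Hence $\phi^n\wedge\dd\psi^n$ — a product of an $L^p$-strongly-null sequence with an $L^q$-bounded sequence $\dd\psi^n$ (note $\dd\psi^n$ is bounded in $L^q$ because $\dd\dd^*\Delta^{-1}$ is an $L^q$-bounded pseudodifferential operator of order $0$) — would converge to $0$ \emph{strongly} in $L^1$ in the strictly subcritical case $\frac1p+\frac1q<1$. In the critical band $1\le\frac1p+\frac1q\le 1+\frac1N$, however, $L^p\cdot L^q$ need not embed in $L^1$, and the product can only be controlled as a bounded sequence of Radon measures: the compactness of $\{\phi^n\}$ in $L^r$ for $r<p^*$ together with the uniform $L^q$ bound on $\dd\psi^n$ shows $\{\phi^n\wedge\dd\psi^n\}$ is bounded in $\M(X)$ provided $\frac{1}{p^*}+\frac1q = \frac1p+\frac1q-\frac1N \le 1$, which is exactly the right endpoint condition \eqref{p,q condition}. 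So $\phi^n\wedge\dd\psi^n\weak\sigma$ weakly-$\star$ in $\M$ along a subsequence, and $\alpha^n\wedge\beta^n \to \bara\wedge\barb + \dd\sigma$ in $\mathcal D'$.

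Then I would show $\sigma$ is atomic and bound the atoms. The point is that $\phi^n\to 0$ strongly in $L^r$ for all $r<p^*$, so the only way $\phi^n\wedge\dd\psi^n$ can fail to converge to $0$ in $\M$ is through concentration of $\dd\psi^n$ — equivalently, through concentration of the defect measures $\mu,\nu$. Quantitatively, for a ball $B_\rho(x)$, H\"older with the interpolation exponents gives
\begin{align*}
\left|\int_{B_\rho(x)} \varphi\,(\phi^n\wedge\dd\psi^n)\right| \lesssim \|\varphi\|_{\linf}\,\|\phi^n\|_{L^p(B_\rho(x))}\,\|\dd\psi^n\|_{L^q(B_\rho(x))}^{\theta}\,\|\dd\psi^n\|_{L^q(X)}^{1-\theta}
\end{align*}
for a suitable $\theta\in(0,1]$ dictated by $\frac1p+\frac1q-\frac1N\le1$, together with the Sobolev-gain estimate $\|\phi^n\|_{L^p(B_\rho(x))}\lesssim \rho^{\gamma}\|\alpha^n\|_{L^p(B_{2\rho}(x))} + (\text{lower order})$ coming from the fact that $\dd^*\Delta^{-1}$ gains a derivative and $\rho^{\gamma}$ with $\gamma = N(\frac1p-\frac1{p^*})=1$ absorbs the extra integrability. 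Passing $n\to\infty$ and then $\rho\to0$ controls $\sigma$ on each point $x$ by $[\mu(\{x\})]^{1/p}[\nu(\{x\})]^{1/q}$, up to a constant depending on $p,N,X$; in particular $\sigma$ is supported on the (at most countable) common atomic set of $\mu$ and $\nu$, so $\sigma=\sum_k v^k\delta_{x^k}$ with $|v^k|\lesssim_{p,N,X}[\mu(\{x^k\})]^{1/p}[\nu(\{x^k\})]^{1/q}$, and $\dd\sigma=\sum_k\dd(v^k\delta_{x^k})$. Finally, if $\frac1p+\frac1q<1+\frac1N$ strictly, then the exponent $\gamma$ above is $>1$ (equivalently the localization picks up a genuinely positive power of $\rho$ with no borderline cancellation, or: the product $L^{p^*}\cdot L^q$ embeds into $L^s$ for some $s>1$, giving equi-integrability), so the $\rho\to0$ limit of the local bound is $0$ and all $v^k$ vanish — recovering Lemma~\ref{lemma: RRT}.

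\textbf{Main obstacle.} The delicate point I expect to be hardest is the concentration estimate in the last paragraph: making rigorous that $\phi^n=\dd^*\Delta^{-1}\alpha^n$ localizes with the \emph{sharp} power of $\rho$, because $\Delta^{-1}$ is nonlocal and one must commute it past cutoffs, controlling the commutator (a smoothing operator whose kernel decays but is not compactly supported) well enough that it contributes only to the strongly-convergent remainder and does not spoil the scaling $\rho^{\gamma}$. On a general closed manifold this requires care with the heat-kernel / Green-operator parametrix for $\Delta$ and with patching local Euclidean estimates via a finite atlas; the endpoint bookkeeping of exponents (tracking exactly when $\gamma=1$ versus $\gamma>1$) is where the hypothesis \eqref{p,q condition} is used to its limit and must be handled with precision.
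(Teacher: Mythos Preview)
Your overall architecture is correct and matches the paper's: Hodge-decompose $\alpha^n=\dd\gamma_n+\xi^n$, $\beta^n=\dd\zeta_n+\eta^n$ with $\xi^n,\eta^n$ strongly convergent (coexact + harmonic parts), observe that all cross terms in $\alpha^n\wedge\beta^n$ except $\dd\gamma_n\wedge\dd\zeta_n$ pass to the limit as weak--strong pairings, and rewrite the remaining term as $\dd(\gamma_n\wedge\dd\zeta_n)$. This is exactly Robbin--Rogers--Temple and is what the paper does.

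The genuine gap is in your concentration step. Your scaling heuristic is internally inconsistent: you compute $\gamma=N(\tfrac1p-\tfrac1{p^*})=1$ and then assert that in the strictly subcritical case $\gamma>1$, which is a contradiction. The actual $\rho$-power in the localized H\"older bound is $N\big(1+\tfrac1N-\tfrac1p-\tfrac1q\big)$, which vanishes exactly at the critical endpoint and is positive only subcritically. So in the critical case there is \emph{no} gain from shrinking $\rho$, and your argument as written yields nothing. Moreover the ``Sobolev-gain estimate'' $\|\phi^n\|_{L^p(B_\rho)}\lesssim\rho\,\|\alpha^n\|_{L^p(B_{2\rho})}$ is false as stated: $\dd^*\Delta^{-1}$ is nonlocal, and while commutators with smooth cutoffs are smoothing (so your stated ``main obstacle'' is not actually the obstacle), there is no local pointwise control of $\phi^n$ by $\alpha^n$.

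What the paper does instead --- and what you are missing --- is to invoke P.-L.~Lions' second concentration compactness lemma. Since $\gamma_n-\gamma$ is bounded in $W^{1,p}$ and $|\dd(\gamma_n-\gamma)|^p\dvg\weak\mu$, Lions' lemma gives that the weak-$\star$ limit $\lambda'$ of $|\gamma_n-\gamma|^{q'}\dvg$ (recall $q'=p^*$ at criticality) is \emph{purely atomic}, $\lambda'=\sum_j c_j\delta_{x^j}$, with the sharp bound $(c_j)^{p/q'}\lesssim\mu(\{x^j\})$. Combining this with an elementary H\"older-type bound on the defect of a product (the paper's Lemma~\ref{lem: appendix}: if $|u_n|^r\weak\lambda$, $|u_n'|^{r'}\weak\lambda'$ then the defect $\varpi$ of $u_nu_n'$ satisfies $|\varpi|(E)\le\lambda(E)^{1/r}\lambda'(E)^{1/r'}$) immediately gives the atomic structure of $\eth$ and the estimate $|v^k|\lesssim[\mu(\{x^k\})]^{1/p}[\nu(\{x^k\})]^{1/q}$. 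The subcritical case is then handled separately (and more simply) by noting that $W^{1,p}\emb\emb L^{q'}$ is compact when $q'<p^*$, so $\gamma_n\wedge\dd\zeta_n$ becomes a strong--weak pairing with no defect. Your attempt to bypass Lions via direct scaling is essentially trying to reprove that lemma by hand, and the sketch does not do so.
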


 We shall refer to $\sum_{k=1}^\infty \dd \left(v^k\delta_{x^k}\right)$ in \eqref{conclusion of wedge prod thm} as the \emph{concentration distribution} in the sequel. The indices $(p,q)$ satisfying \eqref{p,q condition} are said to be in the \emph{critical} range.

\begin{remark}
More precisely, \eqref{conclusion of wedge prod thm} means the following:
\begin{align*}
\bra\alpha^n \wedge \beta^n, \Xi\ket \longrightarrow  \bra \bara \wedge \barb, \Xi\ket + \sum_{k=1}^\infty \bra v^k, \dd^*\Xi \ket \delta_{x^k} \qquad \text{for any } \Xi \in \CC^\infty\left(X;\bigwedge^{\ell_1+\ell_2} T^*X\right).
\end{align*}
The pairings $\bra\bullet,\bullet\ket$, as before, are taken with respect to the metric $g$. In geometric measure theoretic terminologies,  the convergence in \eqref{conclusion of wedge prod thm} should be understood as weak convergence of $(\ell_1+\ell_2)$-dimensional currents on $(X,g)$. The same convention is adopted throughout this paper. 
\end{remark}

If $\frac{1}{p}+\frac{1}{q}< 1$ is assumed instead of \eqref{p,q condition}, Theorem~\ref{thm: wedge} is then reduced to the classical wedge product theorem of Robbin--Rogers--Temple
(Lemma~\ref{lemma: RRT} above with $L=2$). Thus, throughout this paper we  focus solely on the range of indices  in \eqref{p,q condition}. In this case, however, notice below:

\begin{remark}\label{rem: low reg wedge}
The definition of the  wedge products $\alpha^n \wedge \beta^n$ and $\bara \wedge \barb$ requires further  clarification: \emph{a priori} it is unclear if they are well defined distributions for $p^{-1}+q^{-1}>1$. We resort to Hodge decomposition for this purpose. See \S\ref{subsec: weak weak pairing} below for details.
\end{remark}

The proof of Theorem~\ref{thm: wedge} occupies \S\ref{sec: proof of wedge}. A crucial assumption in this theorem is that $1<p,q<\infty$, which shall be referred to as the non-endpoint case --- in contrast to the \emph{endpoint} case tackled in \S\ref{sec: endpoint critical case}, \emph{i.e.}, either $p=1$ or $q=1$. Our strategies of the proof follow  Briane--Casado-D\'{i}az--Murat \cite{bcm}, with the key tool for tackling critical exponents being P.-L. Lions' theory of concentrated compactness  \cite{lions}.

Next, in \S\ref{sec: consequences of wedge prod thm}, we discuss several corollaries of the wedge product Theorem~\ref{thm: wedge}. It will be explained that  Theorem~\ref{thm: wedge} encompasses the classical div-curl lemma of Murat--Tartar \cite{mur1, mur2, tar1, tar2} (\emph{cf}. Coifman--Lions--Meyer--Semmes \cite{clms} too), as well as the generalisation by Briane--Casado-D\'{i}az--Murat \cite{bcm}. We also present a multilinear version of the wedge product theorem, Theorem~\ref{thm: multiple wedge prod}, which addresses the distributional convergence of $\alpha_1^n \wedge \cdots \wedge \alpha_L^n$ for sequences of differential forms $\{\alpha^n_i\}_{i \in \{1,2,\ldots,L\};\,n \in \mathbb{N}}$ of $L^{p_i}$-regularity, respectively, provided that $1\leq \sum_{i=1}^L \frac{1}{p_i} \leq 1+\frac{1}{\dim X}$. Consequences of the exactness  of the concentration measure will be discussed too.

The following section \S\ref{sec: endpoint critical case} deals with the ``endpoint critical case'' of Theorem~\ref{thm: wedge}. By ``critical'' we mean $\frac{1}{p} + \frac{1}{q} = 1+\frac{1}{N}$, and by ``endpoint'' we further mean that either $p$ or $q$ equals $1$. The main result in this case, which generalises \cite[\S\S 3$\&$4]{bcm}, is as follows:

\begin{theorem}\label{thm: endpoint}
Let $\left(X^N,g\right)$ be an $N$-dimensional closed Riemannian manifold; $N \geq 2$. 
Consider two sequences of differential forms $\left\{ \alpha^n \right\}\subset \M\left(X, \bigwedge^{\ell_1} T^*X \right)$ and $\left\{ \beta^n \right\}\subset L^N\left(X, \bigwedge^{\ell_2} T^*X \right)$ satisfying the following conditions:
\begin{enumerate}
\item
$\alpha^n \weak \bara$ weakly-$\star$ in $\M$;
\item
$\beta^n \weak \barb$ weakly in $L^N$;
\item
$\left| \alpha^n - \bara \right| \weak \mu$ weakly-$\star$ in $\M$;
\item
$\left| \beta^n - \barb \right|^N  \weak \nu$ weakly-$\star$ in $\M$;
\item
$\dd\alpha^n \to \dd \bara$ strongly in $W^{-1,N'}$;
\item
$\dd\beta^n \to \dd \barb$ strongly in $L^N$.
\end{enumerate}
Then we have the following convergence (modulo subsequences) in the sense of  distributions:
\begin{equation}
\alpha^n \wedge \beta^n \longrightarrow  \bara \wedge \barb + \sum_{k=1}^\infty \dd \left(v^k\delta_{x^k}\right) \qquad \text{in } \mathcal{D}'
\end{equation}
for some sequences of points $\left\{x^k\right\} \subset X$ and $(\ell_1+\ell_2-1)$-covectors $\left\{v^k\right\}$. Moreover,
\begin{equation*}
\left|v^k\right| \lesssim_{N,X}
\left[\mu\left(\left\{x^k\right\}\right)\right]\left[\nu\left(\left\{x^k\right\}\right)\right]^{\frac{1}{N}}.
\end{equation*}
\end{theorem}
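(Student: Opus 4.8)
The plan is to follow the same architecture as the proof of Theorem~\ref{thm: wedge} (the non-endpoint critical case), adapting each ingredient to the situation where $\alpha^n$ is only a sequence of Radon-measure-valued forms rather than an $L^p$ sequence. First I would reduce to the case $\bara = 0$ and $\barb = 0$ by replacing $\alpha^n$ with $\alpha^n - \bara$ and $\beta^n$ with $\beta^n - \barb$; since $\dd\bara$ and $\dd\barb$ are fixed, conditions (5)--(6) become $\dd\alpha^n \to 0$ in $W^{-1,N'}$ and $\dd\beta^n \to 0$ in $L^N$, and the cross term $\bara\wedge\barb$ in the conclusion is accounted for by bilinearity together with the fact that $\alpha^n\wedge\barb \weak \bara\wedge\barb$ and $\bara\wedge\beta^n\weak\bara\wedge\barb$ (these linear weak-continuity statements follow from the Hodge-decomposition pairing of \S\ref{subsec: weak weak pairing}, using only conditions (5)--(6) on one factor and weak convergence of the other). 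So it suffices to show $\alpha^n\wedge\beta^n \to \sum_k \dd(v^k\delta_{x^k})$ with the stated bound on $|v^k|$.

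Next I would invoke P.-L.~Lions' concentration-compactness machinery, exactly as used for Theorem~\ref{thm: wedge}. The measures $\mu$ (the weak-$\star$ limit of $|\alpha^n|$, a \emph{finite} positive measure since $X$ is closed and $\{\alpha^n\}$ is bounded in $\M$) and $\nu$ (the weak-$\star$ limit of $|\beta^n|^N$) are atomic-plus-diffuse; write $\mu = \sum_k \mu(\{x^k\})\delta_{x^k} + \mu_{\rm diff}$ and similarly for $\nu$, with a common countable atom set $\{x^k\}$. The heart of the argument is the local analysis near a point $x_0 \in X$: test $\alpha^n\wedge\beta^n$ against $\varphi\,\Xi$ where $\Xi\in\CC^\infty(X;\bigwedge^{\ell_1+\ell_2}T^*X)$ and $\varphi$ is a cutoff localising near $x_0$, and use the Hodge decomposition on $\alpha^n$ and $\beta^n$: write each as (exact) $+$ (coexact) $+$ (harmonic), with the exact/coexact parts obtained by applying $\dd\Delta^{-1}$, $\dd^*\Delta^{-1}$. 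Conditions (5)--(6) force the exact part of $\alpha^n$ (resp.\ the part on which $\dd$ acts nontrivially) to be small in the appropriate negative Sobolev space, hence the potentially dangerous pairing is between a coexact piece of one form and a piece of the other. Here is where the ``twist'' of indices $(N',N)$ versus $(N,N')$ in the hypotheses is used: the strong convergence $\dd\alpha^n\to 0$ in $W^{-1,N'}$ pairs against $\beta^n$ bounded in $L^N$, and $\dd\beta^n\to 0$ in $L^N \hookrightarrow W^{-1,N'}$ pairs against $\alpha^n$ bounded in $\M \hookrightarrow W^{-1,N'}$ (using $N' < N/(N-1)$... more precisely the Sobolev embedding $L^N\hookrightarrow W^{-1,N'}$ and $\M\hookrightarrow W^{-1,s}$ for $s< N/(N-1)$, which holds for $s=N'$ exactly when we are at the critical threshold). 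The concentration-function estimate of Lions then yields that, after passing to a subsequence, the defect in the pairing is concentrated at the atoms $\{x^k\}$ and at each atom equals $\bra v^k,\dd^*\Xi\ket$ for some covector $v^k$, i.e.\ the limit distribution is $\sum_k \dd(v^k\delta_{x^k})$.

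For the quantitative bound $|v^k|\lesssim_{N,X}\mu(\{x^k\})\cdot\nu(\{x^k\})^{1/N}$, I would argue locally at a fixed atom $x^k$: choosing nested cutoffs $\varphi_\rho$ supported in balls $B_\rho(x^k)$ and testing the convergence, the contribution is controlled by $\liminf_n \|\alpha^n\|_{\M(B_\rho)}\,\|\beta^n\|_{L^N(B_\rho)}$ up to a lower-order term coming from the part where $\dd$ is controlled (which vanishes as $n\to\infty$ by (5)--(6), or as $\rho\to 0$). Letting $n\to\infty$ gives $\mu(\overline{B_\rho(x^k)})^{1}\,\nu(\overline{B_\rho(x^k)})^{1/N}$ and then letting $\rho\to 0$ isolates the atoms. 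The only subtlety is extracting a genuine covector $v^k$ (not just a bound on the pairing) and checking it is well-defined independent of the exhaustion; this follows because the pairing $\Xi\mapsto$ (defect at $x^k$) is linear and, by the estimate just described, bounded by $\mu(\{x^k\})\nu(\{x^k\})^{1/N}\|\dd^*\Xi\|_{\linf}$ while depending on $\Xi$ only through $\dd^*\Xi(x^k)$ --- this last locality being exactly what distinguishes $\dd(v^k\delta_{x^k})$ from $v^k\delta_{x^k}$ and is forced by the differential-constraint hypotheses.

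The main obstacle I anticipate is the low regularity of $\alpha^n$: since $\alpha^n\in\M$ only, the Hodge decomposition and the operator $\dd\Delta^{-1}$ must be interpreted via elliptic regularity for measure data (so $\Delta^{-1}\alpha^n\in W^{1,s}$ for every $s<N/(N-1)$, and $\dd\Delta^{-1}\alpha^n$, $\dd^*\Delta^{-1}\alpha^n$ land in $L^s$ for such $s$), and one must verify that the wedge product $\alpha^n\wedge\beta^n$ itself makes sense as a distribution --- this requires the Hodge-decomposition definition of \S\ref{subsec: weak weak pairing} extended to the $\M\times L^N$ pairing, using that $\M\hookrightarrow W^{-1,N'}$ and $L^N = (L^{N'})^*$ are in duality modulo the exact/coexact splitting. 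Checking that this definition is consistent and that all the negative-Sobolev pairings close at the critical exponent $N'$ (rather than strictly below it) is the delicate point; everything else is a faithful transcription of the $L^p\times L^q$ argument with $p=1$, $q=N$.
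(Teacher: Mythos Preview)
Your architecture is right, and you correctly identify the crux: closing the pairings at the exponent $N'=\frac{N}{N-1}$ rather than strictly below it. But you stop precisely where the real work begins. You write ``$\M\hookrightarrow W^{-1,s}$ for $s<N/(N-1)$, which holds for $s=N'$ exactly when we are at the critical threshold'' --- this is where the argument breaks. The embedding $\M\hookrightarrow W^{-1,N'}$ is \emph{false} in general (this is the well-known failure of $L^1$ Calder\'on--Zygmund theory), and your fallback of taking $\Delta^{-1}\alpha^n\in W^{1,s}$ only for $s<N'$ does not pair with $L^N$ objects since then $\frac1s+\frac1N>1$. So as stated, the weak--weak pairing does not close and the proof has a genuine gap.

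The missing ingredient is the endpoint elliptic estimate of Bourgain--Brezis and Brezis--Van~Schaftingen (recorded in the paper as Lemma~\ref{lem: brezis-vs}): if $\xi\in\M$ is \emph{coexact} (i.e.\ $\dd^*\xi=0$), then $\Delta\sigma=\xi$ admits a solution with $\dd\sigma\in L^{N'}$, with the quantitative bound $\|\dd\sigma\|_{L^{N'}}\lesssim\|\xi\|_{\M}+\|\dd^*\xi\|_{W^{-2,N'}}$. The coexactness of the piece $\xi^n$ coming from the Hodge decomposition is exactly the extra structure that buys you the endpoint exponent. Concretely, the paper isolates the weak--weak term $\xi^n\wedge\dd\zeta^n$, integrates by parts against a testform to reduce to $\int\xi^n\wedge\zeta^n\wedge\dd\varrho$, writes $\xi^n=\Delta\sigma^n$ with $\sigma^n\in W^{1,N'}$ via Brezis--Van~Schaftingen, and manipulates this into an honest $L^{N'}\times L^N$ pairing $\int\dd\sigma^n\wedge\dd^*[\zeta^n\wedge\dd\varrho]$; only then does Lions' concentration compactness apply (to the potentials $\sigma^n$, not to $\alpha^n$ directly). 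The reverse-H\"older inequality linking $\lambda'=\M\text{-}\lim|\dd(\sigma^n-\sigma)|^{N'}$ back to $\mu$ --- your quantitative step --- again requires the endpoint estimate, not the naive cutoff bound $\|\alpha^n\|_{\M(B_\rho)}\|\beta^n\|_{L^N(B_\rho)}$ you propose, which does not by itself force the defect to be $\dd$-exact at atoms.
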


In this work, any theorem concerning $L^1$ will always be formulated for $\M$, with the $L^1$-norm of integrable functions replaced by the total variation of Radon measures.

\begin{remark}
For the endpoint critical wedge product Theorem~\ref{thm: endpoint}, one may suspect that Condition~(6) appears too strong. However, even replacing this by ``$\dd\beta^n \weak \dd \barb$ weakly in $L^N$'' would lead to counterexamples. See \cite[Example~3.4]{bcm}.
\end{remark}

Our proof of Theorem~\ref{thm: endpoint}  follows the strategies in \cite[\S 3]{bcm}, for which more delicate analysis for endpoint Sobolev embeddings are needed. Key tools include the endpoint estimates for elliptic  systems \emph{\`{a} la} Bourgain--Brezis \cite{bb04} and Brezis--Van Schaftingen \cite{bv}.

The two endpoint critical div-curl lemmas  by Briane--Casado-D\'{i}az--Murat \cite[Theorems~3.1 and 4.1]{bcm}, which were formulated and proved in rather different ways in \cite{bcm}, 
are generalised to the wedge product Theorem~\ref{thm: endpoint} in a unified manner. This is because our formulation, as per Robbin--Rogers--Temple \cite{key1}, circumvents  the asymmetry between divergence and curl by way of exploiting of the  instead (super)symmetric  wedge product.

\begin{remark}
All the results in this paper remain valid if we drop the compactness assumption of $(X,g)$ and, meanwhile, change all the relevant function spaces $\mathcal{X}$ into the local spaces $\mathcal{X}_{\rm loc}$.
\end{remark}

In the final section \S\ref{sec: isom imm} of this paper, we apply the theoretical results established in previous sections to study the weak continuity of the compatibility equations for curvatures --- named after Gauss, Codazzi(--Mainardi), and Ricci --- of isometric immersions of Riemannian manifolds with extrinsic geometry of $L^p$-regularity.

More precisely, consider a family of (approximate, resp.) isometric immersions of a fixed Riemannian surface $(\Sigma,g)$ with $W^{2,p}$-regularity into $\R^3$. Then the associated second fundamental forms $\{\two^\e\}$, which are $2$-tensorfields on $\Sigma$ with $L^p$-regularity, are (approximate, resp.) weak solutions to the Gauss--Codazzi equations. One hopes to find the smallest possible index $p$ which ensures that any $L^p$-weak limit of $\{\two^\e\}$ remains a weak solution to the Gauss--Codazzi equations. The weak continuity property has been used to prove the existence of isometric immersions with weak regularity of certain negatively curved surfaces. See S. Mardare \cite{mar}, Chen--Slemrod--Wang \cite{csw0}, and the subsequent works \cite{csw, chw1, chw2, liarma, cg}. The aforementioned investigations have potential  applications to nonlinear elasticity. See \cite{elas1, elas2, elas3} and the many references therein.

We apply the generalised wedge product Theorem~\ref{thm: wedge} to establish the following:
\begin{theorem}\label{thm: isom imm, 2D}
Let $\left(\Sigma,g\right)$ be an immersed Riemannian surface in $\R^3$. Consider a family $\left\{\two^\e\right\} \subset L^p_\loc\left(\Sigma; \left[T\Sigma \otimes T\Sigma\right]^*\right)$  of weak solutions to the Gauss--Codazzi equations which converges to $\overline{\two}$ in the weak-$L^p_\loc$-topology, where $p \in \left]\frac{4}{3},\infty\right[$. Suppose that the coexact parts (\emph{i.e.}, projection onto the $\dd^*$-image via Hodge decomposition) of $\left\{\two^\e\right\}$ are precompact in the $L^{p'}_\loc$-topology. Then $\overline{\two}$ is still a weak solution to the Gauss--Codazzi equations. 
 \end{theorem}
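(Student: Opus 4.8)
The plan is to recast the Gauss--Codazzi system, locally on $\Sigma$, as a bilinear wedge-product relation between two $1$-forms built linearly from $\two^\e$, and then to apply Theorem~\ref{thm: wedge} with $\ell_1=\ell_2=1$ and $N=\dim\Sigma=2$, $q=p$. Since being a weak solution to the Gauss--Codazzi equations is a local condition, it suffices to verify the conclusion on an arbitrary relatively compact coordinate neighbourhood; after multiplying by a cutoff $\chi\in\CC_c^\infty$ with $\chi\equiv1$ near the point in question and extending the Riemannian metric, we may and do work on a \emph{closed} surface $X$. In a local frame I would assemble the components of $\two^\e$ into $1$-forms $\alpha^\e,\beta^\e$ (the ``rows'' of $\two^\e$, in the fixed smooth gauge furnished by $g$ and its Christoffel symbols), in the classical way so that: (i) the two Codazzi(--Mainardi) equations become $\dd\alpha^\e=\mathcal A(\two^\e)$ and $\dd\beta^\e=\mathcal B(\two^\e)$, where $\mathcal A,\mathcal B$ are $2$-form valued, \emph{linear} in $\two^\e$, with coefficients smooth in $g$; and (ii) the Gauss equation becomes $\alpha^\e\wedge\beta^\e=K\dvg$, a fixed $2$-form independent of $\e$ ($K$ the Gauss curvature of $g$). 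For $p<2$ the wedge $\alpha^\e\wedge\beta^\e$ is understood in the distributional sense of \S\ref{subsec: weak weak pairing}; the hypothesis that the coexact parts of $\{\two^\e\}$ --- equivalently of $\{\alpha^\e\},\{\beta^\e\}$ --- are precompact in $L^{p'}_{\loc}$ is precisely what legitimises this wedge product and secures the passage to the limit of the one Hodge component (``coexact $\wedge$ coexact'') that is not governed by the differential constraints (strong $\times$ weak $\to$ product). The case $\tfrac1p+\tfrac1q=\tfrac2p<1$, i.e. $p>2$, lies outside \eqref{p,q condition} and reduces to the classical wedge product theorem (Lemma~\ref{lemma: RRT}; cf.\ the discussion following Theorem~\ref{thm: wedge}), so henceforth I assume $\tfrac43<p\le2$.

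Next I would check the six hypotheses of Theorem~\ref{thm: wedge} for the pair $(\chi\alpha^\e,\chi\beta^\e)$ on $X$. Conditions (1)--(2) follow from $\two^\e\weak\overline{\two}$ in $L^p_{\loc}$ by linearity, the weak-$L^p(X)$ limits being $\chi\,\bara$ and $\chi\,\barb$ with $\bara,\barb$ the corresponding linear assemblies of $\overline{\two}$; conditions (3)--(4) hold after passing to a subsequence, by weak-$\star$ compactness of bounded sequences in $\M(X)$ --- admissible since the conclusion concerns only the (subsequence-independent) weak limit $\overline{\two}$. For (5)--(6): by (i), $\dd(\chi\alpha^\e)$ differs from $\chi\,\mathcal A(\two^\e)$ by the term $\dd\chi\wedge\alpha^\e$, and hence $\{\dd(\chi\alpha^\e)\}$ is bounded in $L^p(X)$; since $p>4/3$, the embedding $L^p(X)\hookrightarrow W^{-1,p'}(X)$ is compact --- indeed its adjoint $W^{1,p}_0(X)\hookrightarrow L^{p'}(X)$ is compact exactly when $p'<p^{*}=\tfrac{2p}{2-p}$, i.e.\ when $p>4/3$ --- so $\{\dd(\chi\alpha^\e)\}$ is precompact in $W^{-1,p'}(X)$, and being weakly convergent there to $\dd(\chi\bara)$ (distributional derivatives are weakly continuous) it converges strongly; likewise for $\beta^\e$. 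Condition \eqref{p,q condition} reads $1\le\tfrac2p\le\tfrac32$, satisfied for $\tfrac43\le p\le2$.

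Then I would conclude. Since $p>4/3$ \emph{strictly}, $\tfrac1p+\tfrac1q=\tfrac2p<1+\tfrac1N$ strictly, so the final clause of Theorem~\ref{thm: wedge} forces all $v^k=0$: the concentration distribution vanishes and $\chi\alpha^\e\wedge\chi\beta^\e\to\chi^2(\bara\wedge\barb)$ in $\mathcal D'$. But by (ii), $\chi\alpha^\e\wedge\chi\beta^\e=\chi^2K\dvg$ for every $\e$; hence $\chi^2(\bara\wedge\barb)=\chi^2K\dvg$, so on $\{\chi=1\}$ we obtain $\bara\wedge\barb=K\dvg$ --- the Gauss equation for $\overline{\two}$. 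Meanwhile $\dd\bara=\mathcal A(\overline{\two})$ and $\dd\barb=\mathcal B(\overline{\two})$ were already identified in the previous step, which are the Codazzi equations for $\overline{\two}$. As the chart was arbitrary, $\overline{\two}$ is a weak solution of the Gauss--Codazzi system on all of $\Sigma$.

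The routine ingredients are the index arithmetic and the linear-in-$\two^\e$ structure of the reformulation. I expect the real difficulties to be twofold. First, the \textbf{localisation}: transplanting the $L^p_{\loc}$ problem on a possibly non-compact $\Sigma$ into the closed-manifold framework of Theorem~\ref{thm: wedge} while ensuring that both the differential constraints and --- above all --- the wedge relation (ii) survive the cutoff (the extra term $\dd\chi\wedge\alpha^\e$ is harmless for (5)--(6) since it is only $L^p$-bounded, but one must confirm that the Hodge-theoretic definition of the wedge product of \S\ref{subsec: weak weak pairing} is compatible with the cutoff and with restriction to $\{\chi=1\}$, in particular that the coexact-precompactness on $\Sigma$ transfers to the cut-off forms on $X$). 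Second, \textbf{matching the notion of weak solution}: for $p<2$ the Gauss equation $h_{11}h_{22}-h_{12}^2=K|g|$ does not typecheck naively (its left side is only $L^{p/2}_{\loc}$), so one must argue that the distributional wedge product of \S\ref{subsec: weak weak pairing} is the correct interpretation of ``weak solution to the Gauss--Codazzi equations'' in the statement, and pin down exactly where the coexact-precompactness hypothesis is indispensable --- namely, as the sole input beyond the $\dd$-constraints that renders the ``coexact $\wedge$ coexact'' piece of $\alpha^\e\wedge\beta^\e$ well-defined and convergent to that of $\bara\wedge\barb$.
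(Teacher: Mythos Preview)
Your proposal is correct and shares the paper's overall strategy: recast the Gauss--Codazzi system as a wedge-product identity and invoke Theorem~\ref{thm: wedge} in the subcritical regime $\tfrac{2}{p}<1+\tfrac{1}{N}$. The paper argues via the full Cartan connection matrix $\Omega^\e$ and the structural equation $\dd\Omega^\e+\Omega^\e\wedge\Omega^\e=0$ (which in dimension~$2$, codimension~$1$ unpacks exactly to your row-by-row formulation with $\alpha^\e,\beta^\e$), but there is one substantive difference in how the key hypotheses~(5)--(6) are obtained. The paper uses the coexact-precompactness assumption directly: writing $\Omega^\e=\dd\Upsilon_\e+\Xi^\e$, the hypothesis that $\{\Xi^\e\}$ is precompact in $L^{p'}$ gives $\dd\Omega^\e=\dd\Xi^\e$ precompact in $W^{-1,p'}$. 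You instead exploit the \emph{linear} structure of the Codazzi equations to obtain $\dd\alpha^\e,\dd\beta^\e$ bounded in $L^p$, and then the compact embedding $L^p\hookrightarrow W^{-1,p'}$ (valid precisely when $p'<p^\star$, i.e.\ $p>\tfrac{4}{3}$) delivers~(5)--(6). Your route is more intrinsic to the PDE and in fact shows that, for surfaces, the coexact-precompactness hypothesis is a \emph{consequence} of $\two^\e$ satisfying Codazzi weakly: $\dd\alpha^\e\in L^p$ gives, via elliptic regularity for the Hodge component, $\xi^\e\in W^{1,p}\hookrightarrow L^{p^\star}\subset L^{p'}$. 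So the coexact hypothesis functions only to make sense of ``weak solution to the Gauss equation'' for $p<2$ (as you note in your discussion of difficulties), not as an independent analytic input---a sharpening your argument establishes but does not explicitly claim.
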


Here we are able to go beyond the critical exponent $p_{\bf CS}=2$ as in \cite{csw0, csw} (which makes quadratic functions in $\two^\e$ well defined via the Cauchy--Schwarz inequality; see Remark~\ref{rem: low reg wedge}). The weak continuity of Gauss--Codazzi equations is established in the regime $p>4/3$, with compactness in higher regularity classes (\emph{i.e.} $L^{p'}_\loc$) assumed only for some components of $\two^\e$. For $p>2$, Theorem~\ref{thm: isom imm, 2D} agrees with the weak compactness results established in \cite{csw, cl1, cl2}.

We shall prove in \S\ref{sec: isom imm} a more general result (namely, Theorem~\ref{thm: isom imm}) than Theorem~\ref{thm: isom imm, 2D}, which applies to isometric immersions of arbitrary dimensions and codimensions.


To conclude the introduction, we comment  that the entire programme of the present paper is expected to carry over to more general metric measure spaces, especially RCD spaces. In addition, by combining the ideas in this paper and  Conti--Dolzmann--M\"{u}ller \cite{cdm+} (on an extended div-curl lemma, with relaxed first-order differential constraints and additional equi-integrability condition in quadratic combinations), we may further generalise the wedge product theorem of Robbin--Rogers--Temple \cite{key1}. The above topics are left for future investigation.

\section{Proof of the wedge product Theorem~\ref{thm: wedge}}\label{sec: proof of wedge}

This section is devoted to the proof of the generalised wedge product Theorem~\ref{thm: wedge}. 

For this purpose, we combine (with nontrivial modifications) ideas from Robbin--Rogers--Temple \cite{key1}  and Briane--Casado-D\'{i}az--Murat \cite{bcm}. We first exploit a nice ``substructure'' of $\alpha^n$ and $\beta^n$  based on the Hodge decomposition, which  explains why $\alpha^n\wedge\beta^n$ is well defined in the sense of distributions. The non-critical case $\frac{1}{p}+\frac{1}{q}<1+\frac{1}{N}$ follows from a direct  Sobolev embedding argument, while the critical case encompasses additional difficulties arising from  Sobolev embeddings with critical exponents, which will be treated via P.-L. Lions' theory of concentrated compactness \cite{lions}.

\subsection{Hodge-type decomposition of differential forms}

We say that a pair of sequences $\left(\left\{x^n\right\},\left\{y^n\right\}\right)$ is a \emph{``weak-strong'' duality pairing} if for some $r \in ]1,\infty[$, one of $\{x^n\}$ or $\{y^n\}$ converges weakly in $L^r$ and the other converges strongly in $L^{r'}$. Hence, for a quadratic form $Q$, we have that $\{Q(x^n,y^n)\}$ converges weakly in $L^1$. Similarly we shall speak of ``weak-weak'' or ``strong-strong'' pairings, which are by now self-explanatory.

Proposition~\ref{prop: decomposition of diff forms in wedge prod} below is essentially \cite[ Lemma~4.1]{key1}, generalised from Euclidean domains to compact manifolds and with  $(p,q)$ relaxed from H\"{o}lder conjugate exponents to the critical indices satisfying \eqref{p,q condition}. Our proof is slightly different from the corresponding arguments in \cite{key1, bcm}.

\begin{proposition}\label{prop: decomposition of diff forms in wedge prod}
Let $\left(X^N,g\right)$, $p$, $q$, $\left\{\alpha^n\right\}$, and $\left\{\beta^n\right\}$ be as in Theorem~\ref{thm: wedge}. One can find  differential forms $\gamma_n \in W^{1,p}\left(X, \bigwedge^{\ell_1 - 1} T^*X \right)$, $\xi^n \in L^{q'}\left(X, \bigwedge^{\ell_1} T^*X \right)$, $\zeta_n \in W^{1,q}\left(X, \bigwedge^{\ell_2-1} T^*X \right)$, and $\eta^n \in L^{p'}\left(X, \bigwedge^{\ell_2} T^*X \right)$ for each $n = 1,2,3,\ldots$ such that 
\begin{enumerate}
\item
the following decompositions hold \emph{a.e.}:
\begin{equation*}
\begin{cases}
\alpha^n = \dd \gamma_n + \xi^n,\\
\beta^n = \dd \zeta_n + \eta^n;
\end{cases}
\end{equation*}
\item
the following co-exactness conditions holds in the sense of distributions: 
\begin{equation*}
\dd^* \xi^n = 0 \quad \text{and} \quad \dd^* \eta^n = 0\qquad \text{ in } \mathcal{D}';
\end{equation*}
\item
the following convergence results hold (modulo subsequences) in  indicated topologies:
\begin{eqnarray*}
&& \gamma_n \weak \gamma \qquad \text{ weakly in } W^{1,p}\left(X, \bigwedge^{\ell_1 - 1} T^*X \right),\\
&& \xi^n \longrightarrow \xi\qquad \text{ strongly in } L^{q'}\left(X, \bigwedge^{\ell_1} T^*X \right),\\
&&\zeta_n \weak \zeta \qquad \text{ weakly  in } W^{1,q}\left(X, \bigwedge^{\ell_2-1} T^*X \right),\\
&& \eta^n \longrightarrow \eta  \qquad \text{ strongly in } L^{p'}\left(X, \bigwedge^{\ell_2} T^*X \right).
\end{eqnarray*}
\end{enumerate}
\end{proposition}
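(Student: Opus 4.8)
The plan is to use the Hodge decomposition theorem on the closed manifold $(X,g)$ together with elliptic regularity for the Hodge Laplacian $\Delta$, exactly as in \cite[Lemma~4.1]{key1} but with the indices relaxed. First I would recall that on a closed Riemannian manifold, any $L^r$-form $\omega$ (for $1<r<\infty$) admits a Hodge decomposition $\omega = \dd a + \dd^* b + h$ with $h$ harmonic, where $a$, $b$ can be taken in $W^{1,r}$ and the decomposition is bounded in the corresponding norms (Calder\'on--Zygmund theory for $\Delta$). Applying this to $\alpha^n$, I would absorb the harmonic part into the coexact component and write $\alpha^n = \dd\gamma_n + \xi^n$, where $\gamma_n := \dd^* G \alpha^n$ (with $G$ the Green operator for $\Delta$) lies in $W^{1,p}$ and $\xi^n := (\dd^* G\dd)\alpha^n + (\text{harmonic part})$ satisfies $\dd^*\xi^n = 0$. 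This immediately gives items (1) and (2). The key observation from Condition~(5) is that $\dd\alpha^n \to \dd\bara$ strongly in $W^{-1,q'}$; since $\xi^n = \alpha^n - \dd\gamma_n$ and $\dd\xi^n = \dd\alpha^n$ (as $\dd\dd\gamma_n=0$), while $\dd^*\xi^n=0$, elliptic regularity for the first-order Hodge--Dirac operator $\dd+\dd^*$ lets me control $\xi^n$ in $L^{q'}$ by $\dd\alpha^n$ in $W^{-1,q'}$ plus lower-order terms that are compact.

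The crux is the \emph{strong} convergence claims in item (3) for $\xi^n$ and $\eta^n$. For $\xi^n$: it is coclosed and $\dd\xi^n = \dd\alpha^n$ converges strongly in $W^{-1,q'}$ by Condition~(5). The operator $(\dd,\dd^*)$ is overdetermined-elliptic, so on the orthogonal complement of harmonics one has the a priori estimate $\|\xi^n\|_{L^{q'}} \lesssim \|\dd\xi^n\|_{W^{-1,q'}} + \|\dd^*\xi^n\|_{W^{-1,q'}} + \|\xi^n\|_{W^{-1,q'}}$. The first term converges strongly; the second is zero; the third is compact because $\{\alpha^n\}$ is bounded in $L^p$ hence (since $p \geq$ something controlled by \eqref{p,q condition}, and in any case $L^p \hookrightarrow W^{-1,q'}$ compactly on a compact manifold when the embedding $L^p \hookrightarrow L^{q'}$-type Sobolev numerology works out — which is precisely where \eqref{p,q condition} enters) $\{\xi^n\}$ is precompact in $W^{-1,q'}$. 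Hence $\{\xi^n\}$ is Cauchy in $L^{q'}$. The argument for $\eta^n$ is symmetric, swapping the roles of $p \leftrightarrow q$ and using Condition~(6). The weak convergence of $\gamma_n$ in $W^{1,p}$ and $\zeta_n$ in $W^{1,q}$ follows from boundedness (by Hodge theory, $\|\gamma_n\|_{W^{1,p}} \lesssim \|\alpha^n\|_{L^p}$) plus the Banach--Alaoglu theorem, passing to a subsequence.

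The main obstacle will be verifying the compact embedding $L^p(X) \hookrightarrow W^{-1,q'}(X)$ under the critical constraint $\frac{1}{p}+\frac{1}{q} \leq 1 + \frac{1}{N}$, i.e. $\frac{1}{q'} \leq \frac{1}{p} + \frac{1}{N}$, equivalently $\frac1p - \frac1{q'} \geq -\frac1N$; this is exactly the borderline of the Rellich--Kondrachov theorem dualized, and at equality the embedding $W^{1,q}\hookrightarrow L^{p'}$ is merely continuous, not compact, so one must be careful that it is the $W^{-1,q'}$ (not $L^{q'}$) precompactness that is used, which \emph{is} available because $L^p \hookrightarrow W^{-1,q'}$ strictly improves integrability-minus-one-derivative in the allowed range (strict when $\frac1p+\frac1q < 1+\frac1N$, borderline-but-still-compact by a standard argument when $W^{-1,q'}$ sits one full derivative below). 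I would handle this by factoring $L^p \hookrightarrow W^{-1,s} \hookrightarrow W^{-1,q'}$ where the first embedding is compact (Sobolev) and the second continuous, choosing $s$ with $\frac1s = \frac1p - \frac1N + \epsilon$. The remaining details — that $\xi = \alpha - \dd\gamma$, $\eta = \barb - \dd\zeta$ identify the limits correctly, and that passing to successive subsequences is harmless — are routine diagonal arguments.
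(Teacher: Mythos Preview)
Your setup via Hodge decomposition is correct and matches the paper: write $\alpha^n = \dd\gamma_n + \dd^* k^n + h^n$ with $\dd^*\gamma_n=0$, $\dd k^n=0$, set $\xi^n:=\dd^*k^n+h^n$, and obtain weak $W^{1,p}$-convergence of $\gamma_n$ from boundedness plus Banach--Alaoglu. The issue is your argument for the \emph{strong} $L^{q'}$-convergence of $\xi^n$.

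Your Gaffney-type route
\[
\|\xi^n\|_{L^{q'}} \lesssim \|\dd\xi^n\|_{W^{-1,q'}} + \|\dd^*\xi^n\|_{W^{-1,q'}} + \|\xi^n\|_{W^{-1,q'}}
\]
requires precompactness of $\{\xi^n\}$ in $W^{-1,q'}$, for which you invoke a compact embedding $L^p\hookrightarrow W^{-1,q'}$. At the critical endpoint $\frac{1}{p}+\frac{1}{q}=1+\frac{1}{N}$ this is exactly the dual of the critical Sobolev embedding $W^{1,q}\hookrightarrow L^{q^*}=L^{p'}$, which is continuous but \emph{not} compact. Your proposed factoring $L^p\hookrightarrow W^{-1,s}\hookrightarrow W^{-1,q'}$ with $\frac{1}{s}=\frac{1}{p}-\frac{1}{N}+\epsilon$ needs $s\ge q'$, i.e.\ $\frac{1}{p}+\frac{1}{q}\le 1+\frac{1}{N}-\epsilon$, so it fails precisely at the endpoint you most care about. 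The phrase ``borderline-but-still-compact'' is not correct here.

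The fix is simpler than your detour and is what the paper does, using ingredients you already wrote down. From $\dd k^n=0$ one has $\dd\alpha^n=\dd\dd^*k^n=\Delta k^n$. Standard elliptic regularity gives $\Delta^{-1}:W^{-1,q'}\to W^{1,q'}$ bounded, so assumption~(5) yields $k^n\to k$ \emph{strongly} in $W^{1,q'}$ directly, hence $\dd^*k^n\to\dd^*k$ strongly in $L^{q'}$. For the harmonic part, the projection onto harmonic $\ell_1$-forms has finite-dimensional range, hence is compact from $L^p$ into any $L^r$; thus $h^n$ converges strongly in $L^{q'}$ as well. No compact Sobolev embedding is needed, and the argument works uniformly across the full range \eqref{p,q condition}, including the critical case.
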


Here and hereafter the following notational convention is adopted in accordance with \cite{bcm}.

\begin{notation}
For the terms obtained from decomposing given weakly convergent sequences $\left\{\alpha^n\right\}$ and $\left\{\beta^n\right\}$, those with superscripts (subscripts resp.) converge strongly (weakly resp.) with respect to natural topologies given in the context.
\end{notation}

\begin{proof}[Proof of Proposition~\ref{prop: decomposition of diff forms in wedge prod}]

 We first observe that condition~\eqref{p,q condition} on the range of $(p,q)$ implies
 \begin{align*}
 1 < p \leq q'\qquad \text{ and } \qquad 1 < q \leq p'.
\end{align*}
The results for $\alpha^n$ and $\beta^n$ are completely parallel, so we argue for $\alpha^n$ only.

Indeed, by classical Hodge decomposition we have
\begin{align}\label{hodge for alpha n}
\alpha^n = \dd \gamma_n + \dd^* k^n + h^n
\end{align}
with differential $(\ell_1 + 1)$-forms $\left\{k^n\right\}$ of $W^{1,p}$-regularity, and harmonic $\ell_1$-forms $\left\{h^n\right\}$ of $L^p$-regularity. Thanks to \cite[p.86, Theorem~2.4.7]{sch},  here one may further select $\gamma_n$ and $k^n$ to be ``minimal'' in the sense that
\begin{align*}
\text{ $\dd^*\gamma_n = 0$ and $\dd k^n=0$ weakly. }
\end{align*}
Then 
\begin{align*}
\dd \alpha^n = \dd\dd^* k^n = \Delta k^n,
\end{align*}
where $\Delta = \dd^*\dd + \dd\dd^*$ is the Laplace--Beltrami operator on $(X,g)$. By the ellipticity of $\Delta$ and the assumption that $\left\{\dd\alpha^n\right\}$ converges strongly in $W^{-1,q'}$, we deduce that 
\begin{equation}\label{kn conv}
\text{$\{k^n\}$ converges strongly in $W^{1,q'}$.}
\end{equation}
 Notice that although $k^n$ may fail to be uniquely soluble from $\dd \alpha^n = \Delta k^n$, it is determined up to addition by harmonic forms, which is immaterial to \eqref{hodge for alpha n} as $\dd^*k^n$ is uniquely determined. 

On the other hand, the projection $\pi$ from $L^p\left(X, \bigwedge^{\ell_1} T^*X \right)$ onto the subspace $\mathcal{H}$ of harmonic $L^p$-forms [$\pi(\alpha^n)=h^n$] is compact, for the range of $\pi$ is finite-dimensional. This follows from the $\CC^\infty$-regularity of $L^p$-harmonic forms and the fact that  $\mathcal{H}$ here coincides with the $\ell_1^{\text{th}}$-cohomology group. In this regard, one may equip $\mathcal{H}$ with the $\CC^\infty$-(Fr\'{e}chet) topology to maintain the compactness of $\pi$. See \cite[\S 2.6]{sch} for rudiments of de Rham theory. 
 In particular, 
\begin{equation}\label{hn conv}
\text{$\left\{h^n\right\}$ converges strongly in $L^{q'}$.}
\end{equation}

Therefore, setting
\begin{align*}
\xi^n := \dd^*k^n+h^n,
\end{align*}
we have its strong convergence in $L^{q'}$ from \eqref{kn conv} and \eqref{hn conv}. Also, since $\dd^* \circ \dd^*=0$ and $h^n$ is harmonic, we find that $\dd^*\xi^n=0$ in the sense of distributions.

Finally, taking $\dd^*$ to \eqref{hodge for alpha n} and recalling $\dd^*\xi^n=0$ as well as the choice of $\gamma_n$, we have
\begin{align*}
\dd^*\alpha^n = \dd^*\dd\gamma_n=\Delta \gamma_n.
\end{align*}
As $\left\{\alpha^n\right\}$ is weakly convergent in $L^p$ by assumption, using the ellipticity  of $\Delta$, we deduce that $\left\{\gamma_n\right\}$ is weakly convergent in $W^{1,p}$. Again, $\gamma_n$ is only determined up to harmonic forms, but then $\dd \gamma_n$ is uniquely determined; so, in view of \eqref{hodge for alpha n}, without loss of generality we may fix the cohomology class of $\gamma_n$ once and for all.  \end{proof}

\subsection{Definition of the weak-weak pairing}\label{subsec: weak weak pairing}

Proposition~\ref{prop: decomposition of diff forms in wedge prod} allows us to define $\alpha^n \wedge \beta^n$ in the sense of distributions, utilising the algebraic structure of wedge product (bilinearity and superdistributivity under exterior differential). As commented in Remark~\ref{rem: low reg wedge}, without probing into the ``substructures'' of $\alpha^n$ and $\beta^n$, one has apparently no sufficient regularity  to define their wedge products as   distributions.

\subsubsection{Weak-weak paring is well defined in the sense of distributions} 
First, due to the  bilinearity of wedge product we (formally should) have
\begin{align}\label{formal}
\alpha^n \wedge \beta^n =  {\dd \gamma_n \wedge \dd\zeta_n} + \xi^n \wedge \dd\zeta_n + \dd\gamma_n \wedge \eta^n + \xi^n \wedge \eta^n.
\end{align}
The last three terms on the right-hand side of \eqref{formal}, being weak-strong or strong-strong duality pairings, all converge weakly in $L^1\left(X,\bigwedge^{\ell_1 + \ell_2}T^*X\right)$.

The most singular term is the weak-weak pairing $\dd \gamma_n \wedge \dd\zeta_n$. We ``differentiate by parts'' to find that it is well defined in the sense of distributions. Indeed, by $\dd\circ\dd = 0$ in $\mathcal{D}'(X)$ and the super-distributivity of $\dd$ over wedge product, we have
\begin{align}\label{bad}
\dd \gamma_n \wedge \dd\zeta_n = \dd\left(\gamma_n \wedge \dd \zeta_n\right),
\end{align}
provided that $\gamma_n$ and $\zeta_n$ are of class  $\CC^1$. But $\{\dd\zeta_n\}$ is contained in $L^q$, while $\{\gamma_n\}$ lies in $L^{p^*}$ by virtue of Sobolev embedding (as usual, $p^*=\frac{Np}{N-p}$ denotes the Sobolev conjugate exponent). By assumption we have that $\left[q^{-1} + (p^*)^{-1}\right]^{-1} = \left[p^{-1} +q^{-1}-N^{-1}\right]^{-1} \geq 1$, so $\gamma_n \wedge \dd \zeta_n$ is a well defined $L^1$-function \emph{for each fixed $n$}. Hence, one may now define $\dd \gamma_n \wedge \dd\zeta_n$ via \eqref{bad} as a differential $(\ell_1+\ell_2)$-form-valued) distribution.

Of course, the argument above does not ensure that $\gamma_n \wedge \dd \zeta_n$ (hence $\dd \gamma_n \wedge \dd\zeta_n$) tend to any limit as $n\to\infty$ in the sense of distributions, as the weak $L^1$-topology is not compactly embedded in $\mathcal{D}'$.

\subsubsection{ Independence of representation}
 We check that the definition of $\alpha^n \wedge \beta^n$ given by \eqref{formal} $\&$ \eqref{bad} is independent of the decomposition of $\alpha^n$ and $\beta^n$ as in  Proposition~\ref{prop: decomposition of diff forms in wedge prod}. 

\begin{lemma}
Let $\alpha \in L^{p}\left(X, \bigwedge^{\ell_1}T^*X\right)$ and $\beta \in L^{q}\left(X, \bigwedge^{\ell_2}T^*X\right)$ be differential forms on a closed Riemannian manifold $\left(X^N,g\right)$, where $p,q$ satisfy \eqref{p,q condition}. Assume the decompositions
\begin{equation*}
\begin{cases}
\alpha = \dd\gamma + \xi = \dd \gamma' + \xi',\\
\beta = \dd\zeta + \eta = \dd\zeta' + \eta',
\end{cases}
\end{equation*}
where $\gamma, \gamma' \in W^{1,p}\left(X, \bigwedge^{\ell_1 - 1} T^*X \right)$, $\xi,\xi' \in L^{q'}\left(X, \bigwedge^{\ell_1} T^*X \right)$, $\zeta, \zeta' \in W^{1,q}\left(X, \bigwedge^{\ell_2-1} T^*X \right)$, and $\eta, \eta' \in L^{p'}\left(X, \bigwedge^{\ell_2} T^*X \right)$. Also, assume that $\dd^*\xi=\dd^*\xi'=0$ and $\dd^*\eta=\dd^*\eta'=0$ in the sense of distributions. Then it holds, in the sense of distributions too, that
\begin{align*}
\dd\left(\gamma\wedge\dd\zeta\right) + \xi \wedge \dd\zeta + \dd\gamma \wedge \eta + \xi \wedge \eta = \dd\left(\gamma'\wedge\dd\zeta'\right) + \xi' \wedge \dd\zeta' + \dd\gamma' \wedge \eta' + \xi' \wedge \eta'.
\end{align*}
\end{lemma}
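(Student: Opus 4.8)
The guiding principle is that the four-term expression defining $\alpha\wedge\beta$ is, at the formal level, just $\alpha\wedge\beta$ itself: when all the forms are smooth one has $\dd(\gamma\wedge\dd\zeta)=\dd\gamma\wedge\dd\zeta$ (because $\dd\dd\zeta=0$), so the sum telescopes to $(\dd\gamma+\xi)\wedge(\dd\zeta+\eta)=\alpha\wedge\beta$, which is manifestly decomposition-independent. The task is to make this robust at the low regularity at hand. Write $\Phi(\gamma,\xi;\zeta,\eta)$ for the left-hand side of the claimed equality. Each of its four summands is linear in the pair $(\gamma,\xi)$ and, separately, linear in the pair $(\zeta,\eta)$, so $\Phi$ is bilinear and
\begin{align*}
\Phi(\gamma,\xi;\zeta,\eta)-\Phi(\gamma',\xi';\zeta',\eta')=\Phi\bigl(\gamma-\gamma',\,\xi-\xi';\,\zeta,\eta\bigr)+\Phi\bigl(\gamma',\xi';\,\zeta-\zeta',\,\eta-\eta'\bigr).
\end{align*}
It therefore suffices to show that each of the two terms on the right vanishes in $\mathcal{D}'$; I would treat the first in detail, the second being entirely symmetric (with the two factors of the wedge product interchanged).

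For the first term, set $a:=\gamma-\gamma'\in W^{1,q'}$ and $\psi:=\xi-\xi'\in L^{p}$. Comparing the two decompositions of $\alpha$ gives $\dd a+\psi=0$; together with $\dd^*\psi=\dd^*\xi-\dd^*\xi'=0$ and $\dd\psi=-\dd\dd a=0$ in $\mathcal{D}'$ this forces $\Delta\psi=\dd\dd^*\psi+\dd^*\dd\psi=0$, so by ellipticity of the Laplace--Beltrami operator $\psi\in\CC^\infty$; in particular $\dd a=-\psi$ is smooth. Substituting $\psi=-\dd a$ into
\begin{align*}
\Phi(a,\psi;\zeta,\eta)=\dd(a\wedge\dd\zeta)+\psi\wedge\dd\zeta+\dd a\wedge\eta+\psi\wedge\eta
\end{align*}
collapses the last three terms to $-\dd a\wedge\dd\zeta$, leaving $\Phi(a,\psi;\zeta,\eta)=\dd(a\wedge\dd\zeta)-\dd a\wedge\dd\zeta$. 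Thus everything reduces to the distributional Leibniz identity $\dd(a\wedge\dd\zeta)=\dd a\wedge\dd\zeta$.

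To prove that identity I would mollify, say using the heat semigroup $e^{-t\Delta}$, which is smoothing and commutes with $\dd$ (Friedrichs mollifiers in coordinate charts would serve equally well). With $a_t:=e^{-t\Delta}a$ and $\zeta_t:=e^{-t\Delta}\zeta$ smooth for $t>0$, one has $a_t\to a$ in $W^{1,q'}\subseteq W^{1,p}$, hence in $L^{p^*}$ by Sobolev embedding; $\dd\zeta_t=e^{-t\Delta}\dd\zeta\to\dd\zeta$ in $L^q$ with $\dd\dd\zeta_t=0$; and $\dd a_t=-e^{-t\Delta}\psi\to-\psi=\dd a$ in $\CC^\infty$ (hence uniformly), since $\psi$ is smooth. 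For the smooth forms $a_t,\zeta_t$ the classical Leibniz rule gives $\dd(a_t\wedge\dd\zeta_t)=\dd a_t\wedge\dd\zeta_t$. Letting $t\to0^+$: the left-hand side converges in $\mathcal{D}'$ because $a_t\wedge\dd\zeta_t\to a\wedge\dd\zeta$ in $L^1$ (using $\tfrac1{p^*}+\tfrac1q\le1$, the product bound already exploited in \S\ref{subsec: weak weak pairing}), while $\dd a_t\wedge\dd\zeta_t\to\dd a\wedge\dd\zeta$ in $L^q\subset L^1$. Hence $\dd(a\wedge\dd\zeta)=\dd a\wedge\dd\zeta$ and $\Phi(a,\psi;\zeta,\eta)=0$. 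The second term is dispatched identically: with $b:=\zeta-\zeta'\in W^{1,q}$ and $\chi:=\eta-\eta'\in L^{p'}$ one gets $\dd b+\chi=0$, $\dd^*\chi=0$, $\dd\chi=0$, so $\chi\in\CC^\infty$ and $\dd b$ is smooth; substituting $\chi=-\dd b$ reduces $\Phi(\gamma',\xi';b,\chi)$ to $\dd(\gamma'\wedge\dd b)-\dd\gamma'\wedge\dd b$, which vanishes by the same smoothing argument, now applied to $\gamma'$ and $b$.

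The only genuine obstacle is this distributional Leibniz step --- commuting $\dd$ past the wedge product at the borderline regularity where $a\wedge\dd\zeta$ (resp.\ $\gamma'\wedge\dd b$) is only known to lie in $L^1$. The device that makes it go through is to invoke ellipticity of $\Delta$ first, thereby extracting the $\CC^\infty$-smoothness of the ``harmonic defects'' $\psi$ and $\chi$, so that in each critical product one of the two factors is already smooth and routine mollification suffices. Everything else --- the bilinear reduction, the elementary cancellations, and the Sobolev bounds controlling the products --- is already in place from \S\ref{subsec: weak weak pairing}.
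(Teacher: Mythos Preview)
Your proof is correct and follows essentially the same route as the paper: both split the difference into two pieces (first vary $(\gamma,\xi)$, then $(\zeta,\eta)$), both observe that the ``defect'' $\xi-\xi'$ (resp.\ $\eta-\eta'$) is harmonic and hence $\CC^\infty$ by elliptic regularity, and both reduce each piece to the distributional Leibniz identity $\dd(a\wedge\dd\zeta)=\dd a\wedge\dd\zeta$. Your mollification via the heat semigroup makes this Leibniz step more explicit than the paper's terse justification (``$(\gamma-\gamma')\wedge(\dd\dd\zeta)=0$ in the sense of distributions''), but the underlying argument is the same.
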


\begin{proof}
Our argument is an adaptation and generalisation of the arguments in \cite[p.480]{bcm}.

First let us check that 
   \begin{align}\label{a'}
   \dd\left(\gamma\wedge\dd\zeta\right) + \xi \wedge \dd\zeta + \dd\gamma \wedge \eta + \xi \wedge \eta = \dd\left(\gamma'\wedge\dd\zeta\right) + \xi' \wedge \dd\zeta + \dd\gamma' \wedge \eta + \xi' \wedge \eta.
   \end{align}
Indeed, the difference between the members on the left and right hand sides equals
\begin{align*}
D := \dd\left[\left(\gamma-\gamma'\right)\wedge\dd\zeta\right] + \dd\left(\gamma-\gamma'\right) \wedge \eta + \left(\xi-\xi'\right)\wedge \beta.
\end{align*}
Noting that $$\dd\left(\gamma-\gamma'\right) = \xi'-\xi$$ due to the decompositions of $\alpha$, we combine the last two terms to deduce that
\begin{align*}
D = \dd\left[\left(\gamma-\gamma'\right)\wedge\dd\zeta\right] +  \left(\xi-\xi'\right)\wedge\dd\zeta.
\end{align*}
Now, for the first term on the right-hand side we have $\left(\gamma-\gamma'\right)\wedge (\dd\dd\zeta)=0$ in the sense of distributions, thanks to  $\gamma-\gamma' \in W^{1,p}$ and $\zeta \in W^{1,q}$ where $p^{-1} + q^{-1} \leq 1 + N^{-1}$. Hence, 
\begin{align*}
D = \dd\left(\gamma-\gamma'\right)\wedge\dd\zeta  +  \left(\xi-\xi'\right)\wedge\dd\zeta.
\end{align*}
Finally, using the fact that $\dd\left(\gamma-\gamma'\right)=\xi'-\xi$, we obtain
\begin{align*}
D=0.
\end{align*}


Next we check that 
\begin{align}\label{a''}
\dd\left(\gamma'\wedge\dd\zeta\right) + \xi' \wedge \dd\zeta + \dd\gamma' \wedge \eta + \xi' \wedge \eta = \dd\left(\gamma'\wedge\dd\zeta'\right) + \xi' \wedge \dd\zeta' + \dd\gamma' \wedge \eta' + \xi' \wedge \eta'.
\end{align}
Indeed, the difference between left- and right-hand sides equals
\begin{align*}
&\dd\left(\gamma'\wedge\dd\zeta\right) - \dd \left(\gamma'\wedge\dd\zeta'\right) + \dd\gamma' \wedge\left(\eta-\eta'\right)\\
&= \dd\left[\gamma'\wedge\dd\left(\zeta-\zeta'\right)\right] + \dd\gamma' \wedge\left(\eta-\eta'\right)\\
&= \dd\gamma' \wedge\left[\dd\left(\zeta-\zeta'\right) + \left(\eta-\eta'\right)\right]\\
&=0.
\end{align*}

The lemma now follows from \eqref{a'} $\&$ \eqref{a''}, both understood in the distributional sense.  
\end{proof}

\subsection{Proof for the subcritical case: $1<p,q<\infty$ and $1 \leq \frac{1}{p}+\frac{1}{q} < 1+\frac{1}{N}$}\label{subsec: subcrit}

In this case we have $q'<p^*$, the Sobolev conjugate of $p$. Thus the embedding $W^{1,p}\left(X, \bigwedge^{\ell_1-1} T^*X \right)\emb\emb L^{q'}\left(X, \bigwedge^{\ell_1-1} T^*X \right)$ is compact. So, $\gamma_n \wedge \dd\zeta_n$ in \eqref{bad} becomes a \emph{strong-strong} pairing of $L^q$- and $L^{q'}$-differential forms. The wedge product converges strongly in $L^1$ by H\"{o}lder's inequality, hence $\dd \gamma_n \wedge \dd\zeta_n$ (appropriately interpreted as in \S\ref{subsec: weak weak pairing}) converges strongly in $W^{-1,1}$. This together with the weak-$L^1$-convergence of the other three terms on the right-hand side of \eqref{formal} implies that $\alpha^n \wedge \beta^n$ converges in the sense of distributions. This proves \eqref{thm: wedge} in the ``sub-critical'' case.

\subsection{Proof for the (non-endpoint) critical case: $1<p,q<\infty$ and $\frac{1}{p}+\frac{1}{q} = 1+\frac{1}{N}$}\label{subsec, where conc comp is used}

As in the above proof for the subcritical case, the only term needs to be treated is $\alpha^n \wedge \beta^n$ in \eqref{formal}. In the ``non-endpoint'' critical case (\emph{i.e.}, $p \neq 1 \neq q$), we have 
\begin{equation}\label{indices}
1 < p \leq q',\quad 1 < q \leq p',\quad 1<p<N,\quad 1<q<N,\quad p'=q^*,\quad \text{and}\quad q'=p^*. 
\end{equation}

Due to the assumptions (see Theorem~\ref{thm: wedge}, (3)) and Proposition~\ref{prop: decomposition of diff forms in wedge prod}, it holds that
\begin{equation}\label{d gamma convergence}
\left|\dd\left(\gamma_n-\gamma\right)\right|^p\dvg \weak \mu \qquad \text{weakly-$\star$ in $\M(X)$}. 
\end{equation}
Here we used that $\xi^n\to\xi$ strongly in $L^{q'}$, hence strongly in $L^p$, as $X$ is compact and $1<p \leq q'$ by \eqref{indices}. Meanwhile, we may choose $\gamma_n$ and $\gamma$ to be coexact; see \cite[p.86, Theorem~2.4.7]{sch} and the arguments in the proof of Proposition~\ref{prop: decomposition of diff forms in wedge prod}.   This observation, together with \eqref{d gamma convergence} and the classical Gaffney's inequality --- \emph{i.e.}, the $W^{1,q}$-norm of a differential form $\omega$ can be bounded by the $L^q$-norms of $\dd\omega$, $\dd^*\omega$, and $\omega$; see \cite{gaff1, gaff2, gaff3, gaff4} ---  shows that $\left\{\gamma_n\right\}$ is bounded in $W^{1,p}\left(X, \bigwedge^{\ell_1-1} T^*X \right)$. 

We are in the situation of the critical, noncompact Sobolev embedding $W^{1,p} \emb L^{p^*}=L^{q'}$; see \eqref{indices}. By the second concentration compactness lemma \emph{\`{a} la} P.-L. Lions \cite[p.158, Lemma~I.1]{lions}, we have countably many points $\{x^j\} \subset X$ and nonnegative constants $\{c_j\}$ such that
\begin{align}\label{conc comp}
&\left|\gamma
_n - \gamma\right|^{q'}\dvg \weak \lambda' := \sum_{j=1}^\infty c_j \delta_{x^j} \qquad \text{weakly-$\star$ in $\M(X)$},\nonumber\\
&\qquad\qquad \text{ with } \left(c_j\right)^{{p}\slash{q'}} \lesssim_{p,N,X} \mu\left(\left\{x^j\right\}\right) \quad\text{ for each $j$}. 
\end{align}

On the other hand, by assumption (Theorem~\ref{thm: wedge}, (4)) and Proposition~\ref{prop: decomposition of diff forms in wedge prod}, we have 
\begin{equation*}
\left|\dd\left(\zeta_n - \zeta\right)\right|^{q}\dvg\weak \nu \qquad \text{weakly-$\star$ in $\M(X)$}. 
\end{equation*}

An application of Lemma~\ref{lem: appendix} in the Appendix yields a measure $$\eth \in \M\left(X; \bigwedge^{\ell_1+\ell_2 - 1} T^*X  \right)$$
such that
\begin{equation}\label{defect meas}
\gamma_n\wedge\dd\zeta_n \weak \gamma \wedge \dd\zeta + \eth \qquad \text{weakly-$\star$ in $\M(X)$},
\end{equation}
and that, for each Borel set $E \subset X$, the total variation of $\eth$ is estimated by 
\begin{equation*}
\left|\eth\right|(E) \leq \left[\lambda'(E)\right]^{\frac{1}{q'}} \left[\nu(E)\right]^{\frac{1}{q}}. 
\end{equation*}
By the structure of $\lambda'$ characterised in \eqref{conc comp} via concentration compactness, we have
\begin{align*}
\left|\eth\right|(E) &\leq  \left[\nu(E)\right]^{\frac{1}{q}} \left\{ \sum_{\{j:\,x^j \in E\}} c_j \right\}^{\frac{1}{q'}}\nonumber\\
&\lesssim_{p,N,X}\,\sum_{\{j:\,x^j \in E\}} \left[\mu\left(\left\{x^j\right\}\right)\right]^{\frac{1}{p}}\left[\nu\left(\left\{x^j\right\}\right)\right]^{\frac{1}{q}}. 
\end{align*}
The defect measure $\eth$ is thus atomic:
\begin{align}\label{defect meas char}
\eth = \sum_{j=1}^\infty v^j \delta_{x^j} \quad \text{with}\quad
\left|v^j\right| \lesssim_{p,N,X} \left[\mu\left(\left\{x^j\right\}\right)\right]^{\frac{1}{p}}\left[\nu\left(\left\{x^j\right\}\right)\right]^{\frac{1}{q}}\quad \text{ for each $j$}.
\end{align}

Note also that \eqref{defect meas char} together with the assumptions in Theorem~\ref{thm: wedge} (1)--(4) (as well as the lower semicontinuity of Radon measures) implies that $$\sum_{j=1}^\infty \left|v^j\right|<\infty,$$ so $\eth$ is a well defined distribution.

We may now conclude \eqref{conclusion of wedge prod thm} and the ensuing estimates from \eqref{defect meas char} and \eqref{defect meas}. The proof of Theorem~\ref{thm: wedge} is now complete.

\begin{remark}
In the above proof, all the lengths of vectorfields and differential forms on $X$ are taken with respect to the Riemannian metric. This is in agreement with \cite[Remark I.5]{lions}, for we are applying concentration compactness to $(X,g)$ as a metric measure space. This observation motivates the concluding paragraph in the Introduction \S\ref{sec: intro}. 
\end{remark}

\section{Consequences of the Wedge Product Theorem~\ref{thm: wedge}}\label{sec: consequences of wedge prod thm}

\subsection{A multilinear wedge product theorem} By an inductive argument, we obtain the following generalisation of the multiple wedge product theorem in Robbin--Rogers--Temple \cite[Theorem~1.1]{key1} (see Lemma~\ref{lemma: RRT} above).

For its formulation  one more notation is needed: for an $L^p$-differential form $\omega \in L^p\left(X,\bigwedge^\bullet T^*X\right)$ with $1<p<\infty$, Hodge decomposition (see, \emph{e.g.}, \cite{sch}) yields that
\begin{align*}
\omega = \dd \omega_1 + \dd^*\omega_2 + \omega_h,
\end{align*}
where $\omega_h$ is harmonic and $\dd\omega_1$, $\dd^*\omega_2$, and $\omega_h$ are $L^p$-forms. Moreover, one has the uniqueness result: if it also holds that $\omega = \dd\omega_1' + \dd^*\omega_2' + \omega_h'$ with $\dd\omega_1'$, $\dd^*\omega_2'$, $\omega_h' \in L^p$ and $\omega_h'$  harmonic, then $\dd\omega_1=\dd\omega_1'$, $\dd^*\omega_2 = \dd^*\omega_2'$, and $\omega_h=\omega_h'$. So there is a well-defined norm-1 projection 
\begin{align}\label{proj}
\proj:  L^p\left(X,\bigwedge^\bullet T^*X\right) \longrightarrow  L^p\left(X,\bigwedge^\bullet T^*X\right),\qquad \proj(\omega) := \dd\omega_1.
\end{align}
That is, $\proj$ is the projection of $L^p$-differential forms onto the exact part in Hodge decomposition.

 
Our multiple wedge product theorem is stated below. We have an additional ``no-loss in norm'' condition, which we currently do not know whether it can be dropped.

\begin{theorem}\label{thm: multiple wedge prod}
Let $\left(X^N,g\right)$ be an $N$-dimensional closed Riemannian manifold; $N \geq 2$. Let $1<p_1, \ldots, p_L<\infty$ be such that $$1\leq \sum_{i=1}^L \frac{1}{p_i} \leq 1+\frac{1}{N},$$ where $L$ is a natural number, and assume that for each $i \in \{1,\ldots,L\}$, one has $$q_i := \frac{1}{\sum_{j \in \{1,\ldots,L\}\setminus\{i\}} \frac{1}{p_j}}>1.$$

Consider $L$ sequences of differential forms $\left\{ \alpha_{1}^n \right\}, \ldots, \left\{ \alpha_{L}^n \right\}$, where $ \alpha_{i}^n \in L^{p_i}\left( X,\bigwedge^{\ell_i} T^*X \right)$ for each $n$ and $0 \leq \ell_i \leq n$, that satisfy the following conditions:
\begin{enumerate}
\item
$\alpha_i^n \weak \bara_i$ weakly in $L^{p_i}$ for each $i \in \{1,2,\ldots,L\}$;
\item
$\left|\alpha^n_i - \bara_i\right|^{p_i}\dvg \weak \mu_i$ weakly-$\star$ in $\M$ for each $i \in \{1,2,\ldots,L\}$;
\item
$\dd \alpha_i^n \to \dd \bara_i$ strongly in $W^{-1,q_i'}$ for each $i \in \{1,2,\ldots,L\}$; and
\item (``no-loss in norm'')
there exists $k \in \{1,2,\ldots,L\}$ such that 
\begin{align}\label{no loss}
&\left\|\alpha^n_1 \wedge \ldots \wedge \widehat{\alpha^n_k} \wedge \ldots \wedge \alpha^n_L - \proj\alpha^n_1 \wedge \ldots \wedge \widehat{\proj\alpha^n_k} \wedge \ldots \wedge \proj\alpha^n_L \right\|_{L^{p_k'}}\nonumber\\
&\quad \longrightarrow \left\|\bara_1 \wedge \ldots \wedge \widehat{\bara_k} \wedge \ldots \wedge \bara_L - \proj\bara_1 \wedge \ldots \wedge \widehat{\proj\bara_k} \wedge \ldots \wedge \proj\bara_L \right\|_{L^{p_k'}} \quad\text{as } n \to \infty,
\end{align}
wherein the projection operator $\proj$ has been defined in \eqref{proj}.
\end{enumerate}

Then the convergence result below holds after passing to subsequences:
\begin{align*}
\alpha^n_1 \wedge \ldots \wedge \alpha^n_L \longrightarrow \bara_1 \wedge \ldots\wedge \bara_L + \sum_{k=1}^\infty\dd\left(v^k \delta_{x^k}\right) \qquad \text{in } \mathcal{D}'
\end{align*}
for some sequences of points $\left\{x^k\right\} \subset X$ and $\left(\sum_{j=1}^L \ell_j - 1\right)$-covectors $\left\{v^k\right\}$. In addition, the weights $\left\{v^k\right\}$ have their moduli bounded as follows:
\begin{align*}
\left|v^k\right| \lesssim_{p_1,\ldots,p_L,N,X} \left\{ \prod_{j=1}^L \left[\mu^j\left(\left\{x^k\right\}\right)\right]^{\frac{1}{p_j}}\right\}.
\end{align*} 
\end{theorem}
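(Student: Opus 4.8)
The plan is to prove Theorem~\ref{thm: multiple wedge prod} by induction on $L$, reducing the multilinear case to repeated application of the bilinear wedge product Theorem~\ref{thm: wedge}. The base case $L=1$ is trivial (weak convergence in $L^{p_1}$ already gives distributional convergence, with no concentration), and $L=2$ is precisely Theorem~\ref{thm: wedge}, where one checks that the index condition $1 \le \frac{1}{p_1}+\frac{1}{p_2} \le 1+\frac{1}{N}$ together with $q_1 = p_2 > 1$, $q_2 = p_1 > 1$ matches hypothesis \eqref{p,q condition} and conditions (1)--(6) there. For the inductive step, I would group the first $L-1$ forms: set $\beta^n := \alpha_1^n \wedge \cdots \wedge \alpha_{L-1}^n$ and pair it with $\alpha_L^n$. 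The target exponent for $\beta^n$ should be $r := \left(\sum_{i=1}^{L-1}\frac{1}{p_i}\right)^{-1} = q_L$, and the pair $(r, p_L)$ satisfies $\frac{1}{r}+\frac{1}{p_L} = \sum_{i=1}^L \frac{1}{p_i} \in [1, 1+\frac{1}{N}]$, so Theorem~\ref{thm: wedge} applies provided we can verify its six hypotheses for $(\beta^n, \alpha_L^n)$.

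The key technical work is thus establishing, via the inductive hypothesis, that $\{\beta^n\}$ is bounded in $L^r$ with $\beta^n \weak \bar\beta := \bara_1 \wedge \cdots \wedge \bara_{L-1} + (\text{concentration})$, that $|\beta^n - \bar\beta|^r \dd{\rm vol}_g^X$ converges weakly-$\star$ to some $\rho \in \M(X)$ controlled by $\prod_{i=1}^{L-1}[\mu_i(\cdot)]^{1/p_i}$, and that $\{\dd\beta^n\}$ converges strongly in $W^{-1,p_L'}$. The first point requires a bit of care: the inductive hypothesis delivers distributional convergence of $\alpha_1^n \wedge\cdots\wedge\alpha_{L-1}^n$, but to feed this into Theorem~\ref{thm: wedge} I need genuine weak $L^r$-convergence, which is where the ``no-loss in norm'' condition (4) enters — it upgrades the weak convergence of the relevant wedge subproducts so that the concentration part does not contaminate the $L^r$-norm bound, allowing one to split $\beta^n$ into a weakly $L^r$-convergent piece plus a concentration distribution supported at countably many points. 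The strong $W^{-1,p_L'}$-convergence of $\dd\beta^n$ follows by the Leibniz rule for $\dd$ on wedge products (superdistributivity), expressing $\dd\beta^n = \sum_{i=1}^{L-1} \pm \alpha_1^n \wedge \cdots \wedge \dd\alpha_i^n \wedge \cdots \wedge \alpha_{L-1}^n$ and using hypothesis (3) that each $\dd\alpha_i^n \to \dd\bara_i$ strongly in $W^{-1,q_i'}$ together with Hölder/Sobolev bounds on the remaining bounded factors; one checks the exponents close up to land in $W^{-1,p_L'}$.

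With these verifications in hand, Theorem~\ref{thm: wedge} gives $\beta^n \wedge \alpha_L^n \to \bar\beta \wedge \bara_L + \sum_k \dd(w^k \delta_{y^k})$ with $|w^k| \lesssim [\rho(\{y^k\})]^{1/r}[\mu_L(\{y^k\})]^{1/p_L}$, and expanding $\bar\beta$ reproduces $\bara_1\wedge\cdots\wedge\bara_L$ plus additional concentration terms; combining the bound $[\rho(\{y^k\})]^{1/r} \lesssim \prod_{i=1}^{L-1}[\mu_i(\{y^k\})]^{1/p_i}$ with the factor from $\alpha_L^n$ yields the claimed estimate $|v^k| \lesssim \prod_{j=1}^L [\mu^j(\{x^k\})]^{1/p_j}$. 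I expect the main obstacle to be the bookkeeping around the concentration distribution in the inductive step: one must ensure that the concentration part of $\beta^n$ (which is a distribution, not an $L^r$-function) can be handled separately — essentially by observing it is a locally finite sum of derivatives of Dirac masses, so its wedge against the weakly convergent $\alpha_L^n$ contributes only further terms of the form $\dd(v^k\delta_{x^k})$, or more precisely by carefully arranging the decomposition so that only the genuinely $L^r$-bounded part of $\beta^n$ is passed to Theorem~\ref{thm: wedge} while the concentration part is carried along additively; verifying that the Hodge-decomposition substructure and the ``no-loss'' hypothesis (4) are exactly what makes this splitting consistent is the delicate point. A secondary subtlety is confirming that the index $q_i > 1$ assumptions are preserved and suffice at each stage of the induction, which amounts to elementary but careful inequality chasing.
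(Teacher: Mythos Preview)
Your overall strategy --- reduce to the bilinear Theorem~\ref{thm: wedge} by grouping $\beta^n := \alpha_1^n \wedge \cdots \wedge \alpha_{L-1}^n$ and pairing with $\alpha_L^n$ --- matches the paper exactly. However, two points in your execution are off, and one of them is a genuine gap.

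First, a simplification you are missing: the $(L-1)$-fold product $\beta^n$ carries \emph{no} concentration. Since $\sum_{i=1}^{L-1}\frac{1}{p_i} = \sum_{i=1}^{L}\frac{1}{p_i} - \frac{1}{p_L} < 1+\frac{1}{N}$, the induction hypothesis (or an iterated application of Theorem~\ref{thm: wedge}) places you in the strictly subcritical regime, so $\beta^n \to \bara_1\wedge\cdots\wedge\bara_{L-1}$ in $\mathcal{D}'$ with all $v^k=0$. Combined with the $L^{q_L}$-boundedness from H\"older, this upgrades immediately to weak $L^{q_L}$-convergence. Your worry about ``the concentration part of $\beta^n$ (which is a distribution, not an $L^r$-function)'' is therefore unfounded, and condition~(4) is \emph{not} what handles it.

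Second, and this is the real gap: your argument for the strong $W^{-1,p_L'}$-convergence of $\dd\beta^n$ via the Leibniz expansion does not work. Each summand $\alpha_1^n\wedge\cdots\wedge\dd\alpha_i^n\wedge\cdots\wedge\alpha_{L-1}^n$ pairs one strongly convergent factor $\dd\alpha_i^n \in W^{-1,q_i'}$ against $L-2$ factors that are only \emph{weakly} bounded in $L^{p_j}$; a strong-times-weak product of this type does not yield strong convergence (and indeed the product of a $W^{-1,s}$-distribution with an $L^t$-function is not even well defined without further structure). The paper circumvents this by Hodge-decomposing each $\alpha_j^n = \dd\gamma_{n,j}+\xi_j^n$ and observing the cancellation $\dd\bigl[\dd\gamma_{n,1}\wedge\cdots\wedge\dd\gamma_{n,L-1}\bigr]=0$ (since this term is already $\dd$ of something). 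The remaining $2^{L-1}-1$ cross-terms, which together equal $\beta^n - \proj\alpha_1^n\wedge\cdots\wedge\proj\alpha_{L-1}^n$, are first shown to converge \emph{weakly} in $L^{p_L'}$ by H\"older (the single strong factor $\xi_j^n\in L^{q_j'}$ exactly makes the exponents close up to $p_L'$), and then condition~(4) enters --- not where you placed it --- via the Radon--Riesz theorem to promote this to \emph{strong} $L^{p_L'}$-convergence, whence $\dd\beta^n$ converges strongly in $W^{-1,p_L'}$.
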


\begin{notation}
Here and hereafter, 
 circumflex in expressions of the form $\mathfrak{b_1} \wedge \ldots \wedge \widehat{\mathfrak{b}_k} \wedge \ldots \wedge \mathfrak{b}_L$ (and the like)  designates the omission of the $k^{\text{th}}$ entry $\mathfrak{b}_k$. 
\end{notation}

\begin{proof}[Proof of Theorem~\ref{thm: multiple wedge prod}]
First of all, by supercommutativity of the wedge product, there is no loss of generality to prove for only one $k$ in the ``no-loss in norm'' condition~(4). Throughout the proof we shall fix $k=L$.   The thesis follows immediately from Theorem~\ref{thm: wedge} by identifying $\left(\alpha^n_1 \wedge \ldots \wedge \alpha^n_{L-1}, \alpha^n_L, \bara_1 \wedge \ldots\wedge \bara_{L-1},\bara_L\right)$ with $\left( \beta^n,\alpha^n, \barb,\bara\right)$ and $(q_L, p_L)$ with $(q,p)$, \emph{provided that the hypotheses in Theorem~\ref{thm: wedge} are verified}.  The estimate on $v^k$ are proved by a simple induction argument, of which the case $L=2$ is covered by Theorem~\ref{thm: wedge}.

 The verification for the hypotheses in Theorem~\ref{thm: wedge} is carried out below in  two steps.

\smallskip
\noindent
{\bf Step 1.} First, observe that $\left\{\alpha^n_1 \wedge \ldots \wedge \alpha^n_{L-1}\right\}$ is bounded in $L^{q_L}\left(X,\bigwedge^{\sum_{j=1}^{L-1} \ell_j}T^*X\right)$, as each $\left\{\alpha^n_i\right\}$ is bounded in $L^{p_i}$ by assumption. We are in the subcritical case in the sense of \S\ref{subsec: subcrit} (for only $(L-1)$ forms are taken into account here, so that $1\leq \sum_{i=1}^{L-1} \frac{1}{p_i} < 1+\frac{1}{N}$). Thus, a direct induction and Theorem~\ref{thm: wedge} yield that $\alpha^n_1 \wedge \ldots \wedge \alpha^n_{L-1} \to \bara_1 \wedge \ldots \wedge \bara_{L-1}$ in the distributional sense. This together with the $L^{q_L}$-boundedness established above implies that the convergence takes place in the weak $L^{q_L}$-topology. 

As a result, by a standard compactness argument, there exists a positive finite Radon measure $\mu$ on $X$ such that
\begin{align*}
\left|\alpha^n_1 \wedge \ldots \wedge \alpha^n_{L-1} - \bara_1 \wedge \ldots \wedge \bara_{L-1}\right|^{q_L}\dvg \weak \mu \qquad \text{ weakly-$\star$ in $\M(X)$}.
\end{align*}

\smallskip
\noindent
{\bf Step 2.} We also need to justify that 
\begin{align*}
\text{$\left\{\dd\left(\alpha^n_1 \wedge \ldots \wedge \alpha^n_{L-1}\right)\right\}$ converges strongly in $W^{-1, p_L'}$.}
\end{align*}
This essentially relies on the cancellation phenomenon exploited in \S\ref{subsec: weak weak pairing}, and the proof will be presented  in the four substeps~2A--2D below.

\noindent
{\bf Step~2A.}  By Proposition~\ref{prop: decomposition of diff forms in wedge prod}, one can decompose each $\alpha^n_j$ for $n \in \mathbb{N}$ and $j \in \{1,\ldots, L-1\}$ into
\begin{align*}
\alpha^n_j = \dd\gamma_{n,j} + \xi^n_j,
\end{align*}
where
\begin{equation*}
\begin{cases}
\gamma_{n,j} \weak \gamma_j\qquad \text{ weakly in } W^{1,p_j},\\
\xi^n_j \longrightarrow \xi_j\qquad \text{ strongly in } L^{q_j'}. 
\end{cases} 
\end{equation*}
We need to show that 
\begin{align}\label{strong conv, to prove}
&\dd\left\{\alpha^n_1 \wedge \ldots \wedge \alpha^n_{{L-1}}\right\} = \dd\left\{\left(\dd\gamma_{n,1} + \xi^n_1\right) \wedge \ldots \wedge  \left(\dd\gamma_{n,L-1} + \xi^n_{L-1}\right)\right\} \nonumber\\
&\quad \longrightarrow \dd\left\{\left(\dd\gamma_{1} + \xi_1\right) \wedge \ldots \wedge  \left(\dd\gamma_{L-1} + \xi_{L-1}\right)\right\} = \dd\left\{\bara_1 \wedge \ldots \wedge \bara_{L-1}\right\} \text{ strongly in } W^{-1,p_L'}.
\end{align}

\begin{remark}
Of course, \eqref{strong conv, to prove} would follow immediately if one could prove that
\begin{align}\label{false}
&\alpha^n_1 \wedge \ldots \wedge \alpha^n_{{L-1}} = \left(\dd\gamma_{n,1} + \xi^n_1\right) \wedge \ldots \wedge  \left(\dd\gamma_{n,L-1} + \xi^n_{L-1}\right)\nonumber\\
&\quad \longrightarrow \left(\dd\gamma_{1} + \xi_1\right) \wedge \ldots \wedge  \left(\dd\gamma_{L-1} + \xi_{L-1}\right) = \bara_1 \wedge \ldots \wedge \bara_{L-1} \qquad\text{ strongly in } L^{p_L'}.
\end{align} 
This, unfortunately, is impossible --- the term $\dd\gamma_{n,1} \wedge \ldots \wedge \dd\gamma_{n,L-1}$ is an $(L-1)$-multilinear combination of weakly convergent sequences in $L^{p_j}$ for $j \in \{1,\ldots, L-1\}$, which by H\"{o}lder's inequality is only bounded in $$L^{\left[\sum_{j=1}^{L-1} \frac{1}{p_j}\right]^{-1}} = L^{q_L}.$$ But as for \eqref{indices} we have $q_L \leq p_L'$, so the false claim~\eqref{false} is stronger than what we can actually obtain. 
\end{remark}

\noindent
{\bf Step~2B.}   Nevertheless, $\dd\gamma_{n,1} \wedge \ldots \wedge \dd\gamma_{n,L-1}$ is the \emph{only} troublesome term here --- we assert that
\begin{align}\label{assert}
&\text{all the other $\left(2^{L-1}-1\right)$ terms in $\left(\dd\gamma_{n,1} + \xi^n_1\right) \wedge \ldots \wedge  \left(\dd\gamma_{n,L-1} + \xi^n_{L-1}\right)$}\nonumber\\
&\text{converge strongly in $L^{p_L'}$.}
\end{align}

To verify the assertion, first we note that 
\begin{align*}
&\bigg\{\text{the sum of all such $\left(2^{L-1}-1\right)$ terms in $\left(\dd\gamma_{n,1} + \xi^n_1\right) \wedge \ldots \wedge  \left(\dd\gamma_{n,L-1} + \xi^n_{L-1}\right)$}\bigg\} \\
&\qquad \equiv  \left(\alpha^n_{1} \wedge \ldots \wedge \alpha^n_{L-1}\right) - \left(\proj\alpha^n_{1} \wedge \ldots \wedge \proj\alpha^n_{L-1}\right).
\end{align*}
This follows from the definition of $\proj$, since it holds for each $j \in \{1,\ldots,L-1\}$ that $$\proj \alpha^n_j = \dd\gamma_{n,j}.$$

In addition, if we can prove that $$\left\{\left(\alpha^n_{1} \wedge \ldots \wedge \alpha^n_{L-1}\right) - \left(\proj\alpha^n_{1} \wedge \ldots \wedge \proj\alpha^n_{L-1}\right)\right\}$$ converges \emph{weakly} in $L^{p_L'}$, then the ``no-loss in norm'' condition~\eqref{no loss} will promote it to \emph{strong} convergence in $L^{p_L'}$, by virtue of the Radon--Riesz theorem. See, \emph{e.g.}, \cite[p.17, Theorem~1.37]{afp}.

For this purpose, notice that any of the $\left(2^{L-1}-1\right)$ terms in consideration takes the form $\mathfrak{a}_{n,1} \wedge \ldots \wedge \mathfrak{a}_{n,L-1}$, where
\begin{itemize}
\item
each $\mathfrak{a}_{n,j}$ is either $\dd\gamma_{n,j}$ or $\xi^n_j$; and
\item
for at least one $j \in \{1,\ldots, L-1\}$, $\mathfrak{a}_{n,j} = \xi^n_j$.
\end{itemize}
It suffices to prove the convergence of those terms with $\mathfrak{a}_{n,j} = \xi^n_j$ for \emph{only one} $j \in \{1,\ldots, L-1\}$, because at each place $\mathfrak{a}_{n,j}$ the choice $\xi_j^n$ has higher regularity than $\dd\gamma_{n,j}$. In fact, $\left\{\xi_j^n\right\}$ is strongly convergent in $L^{q_j'}$ and $\left\{\dd\gamma_{n,j}\right\}$ is weakly convergent in $L^{p_j}$, with the indices $p_j \leq q_j'$ (again argued as for \eqref{indices}). Thus, as the underlying space $\left(X,\dvg\right)$ is compact, the more $\xi_j^n$ we have in $\mathfrak{a}_{n,j}$, the better convergence results one may obtain. 

Summarising the above discussions, it now suffices to show  that
\begin{align}\label{conv, true}
\left\{\dd \gamma_{n,1} \wedge \ldots \wedge \dd\gamma_{n, j-1} \wedge \xi^n_j \wedge \dd\gamma_{n,j+1} \wedge \ldots \wedge \dd \gamma_{n, L-1}\right\}\quad\text{converges weakly in } L^{p_L'}
\end{align}
for each $j \in \{1,\ldots, L-1\}$.

In the $(L-1)$-multilinear combination above, we have one strongly convergent term $\xi^n_j$. By H\"{o}lder's inequality, $\left\{\dd \gamma_{n,1} \wedge \ldots \wedge \dd\gamma_{n, j-1} \wedge \xi^n_j \wedge \dd\gamma_{n,j+1} \wedge \ldots \wedge \dd \gamma_{n, L-1}\right\}$ converges weakly in $L^{\mathcal{Q}}$, with the index
\begin{align*}
\mathcal{Q} &= \left[ \left(\frac{1}{p_1} + \ldots + \frac{1}{p_{j-1}} + \frac{1}{p_{j+1}} + \ldots +\frac{1}{p_{L-1}}\right) + \frac{1}{q_j'} \right]^{-1}\\
&= \left[\left(\frac{1}{q_j} - \frac{1}{p_L}  \right)+ \frac{1}{q_j'}\right]^{-1}\\
&= \left[1-\frac{1}{p_L}\right]^{-1} \\
&= p_L'.
\end{align*}

Thus, \eqref{conv, true} is proved, and hence follows the assertion~\eqref{assert}.

\noindent
{\bf Step~2C.} Let us return to the  term $\dd\gamma_{n,1} \wedge \ldots \wedge \dd\gamma_{n,L-1}$. Recall from the fake claim~\eqref{false} and the ensuing paragraph that it cannot be proved to converge strongly in $L^{p_L'}$. However,  
\begin{equation}\label{dd}
\dd \left[\dd\gamma_{n,1} \wedge \ldots \wedge \dd\gamma_{n,L-1}\right] = 0\qquad \text{in the sense of distributions.}
\end{equation}
Once established, this identity together with \eqref{assert} above implies \eqref{strong conv, to prove}.

 The key to the justification of  \eqref{dd} lies in its definition: the term $\left[\dd\gamma_{n,1} \wedge \ldots \wedge \dd\gamma_{n,L-1}\right]$ therein is well defined only when understood as follows:
\begin{align}\label{ddd}
\left[\dd\gamma_{n,1} \wedge \ldots \wedge \dd\gamma_{n,L-1}\right] &:= \dd \left\{\gamma_{n,1} \wedge \dd\gamma_{n,2} \wedge \ldots \wedge \dd\gamma_{n,L-1}\right\}\nonumber\\
&\qquad\qquad \text{in the sense of distributions}.
\end{align}
Using \eqref{ddd} and that $\dd\circ\dd =0$ in the sense of distributions, we obtain \eqref{dd} immediately.

\noindent
{\bf Step~2D.} It remains to justify \eqref{ddd}. The crucial observation is as in Step~1 above: since  only $(L-1)$ forms are taken into account, we are in the subcritical regime; that is, $\sum_{i=1}^{L-1} \frac{1}{p_i} < 1+\frac{1}{N}$. In the sequel, let us show that $\left\{\gamma_{n,1} \wedge \dd\gamma_{n,2} \wedge \ldots \wedge \dd\gamma_{n,L-1}\right\}$ is weakly convergent in $L^1$.

For this purpose, note that $\left\{\dd \gamma_{n,j}\right\}_{n \in \mathbb{N}}$ is weakly convergent in $L^{p_j}$ for each $j \in \{2, \ldots, L-1\}$, while $\left\{\gamma_{n,j}\right\}$ is weakly convergent in $W^{1,p_j}$, hence strongly convergent in $L^r$ for any $r \in \left[1, p_j^\star = \frac{Np_j}{N-p_j}\right[$. One may take $r=q_j' - \e$ for arbitrary $\e>0$. In fact, for each $j \in \{1,2,\ldots,L\}$,
\begin{align*}
q_j' = \frac{1}{1-\frac{1}{q_j}}
= \frac{1}{1- \left(\sum_{i=1}^L \frac{1}{p_i}\right) + \frac{1}{p_j}} \leq  \frac{1}{\frac{1}{p_j} - \frac{1}{N}} = p_j^\star.
\end{align*} 

As a consequence, $\left\{\gamma_{n,1} \wedge \dd\gamma_{n,2} \wedge \ldots \wedge \dd\gamma_{n,L-1}\right\}$ converges weakly in the $(L-1)$-fold product of Lebesgue spaces $\left(L^{p_2}\cdot \ldots \cdot L^{p_{L-1}} \cdot L^{q_1' - \e}\right)$. The indices thereof satisfy 
\begin{align*}
\frac{1}{\frac{1}{p_2} + \ldots + \frac{1}{p_{L-1}} + \frac{1}{q_1'-\e}} = \frac{1}{\frac{1}{q_1} - \frac{1}{p_L} + \frac{1}{q_1' - \e}}  = \frac{1}{1-\frac{1}{p_L}} - \kappa = p_{L}' -\kappa\quad \text{for } \kappa = \mathfrak{o}(\e),
\end{align*}
where $\kappa = \mathfrak{o}(\e)$ means that $\kappa$ can be made arbitrarily small by taking $\e >0$ sufficiently close to zero. But $p_L<\infty$, so by choosing $\e>0$ sufficiently small and applying the H\"{o}lder's inequality, we can require that $p_L'-\kappa \geq 1$. This choice warrants the  convergence of $\left\{\gamma_{n,1} \wedge \dd\gamma_{n,2} \wedge \ldots \wedge \dd\gamma_{n,L-1}\right\}$ in the weak $L^1$-topology. 

Thus we arrive at \eqref{ddd}. The proof is now complete.  \end{proof}

\subsection{The div-curl formulation} The prototype of the wedge product theorem of Robbin--Rogers--Temple \cite{key1}, hence of Theorem~\ref{thm: wedge} in this paper, is the classical \emph{div-curl lemma} of Murat and Tartar  (\cite{mur1, mur2, tar1, tar2}). An analogous div-curl-type result follows easily from Theorem~\ref{thm: wedge}.

\begin{corollary}\label{cor: div-curl}
Let $\left(X^N,g\right)$ be an $N$-dimensional closed Riemannian manifold; $N \geq 2$. Let $1<p,q<\infty$ be such that
\begin{equation*}
1 \leq \frac{1}{p} + \frac{1}{q} \leq 1+\frac{1}{N}.
\end{equation*}
Consider two sequences of differential $\ell$-forms $\left\{ \alpha^n \right\}\subset L^p\left(X, \bigwedge^{\ell} T^*X \right)$ and $\left\{ \theta^n \right\}\subset L^q\left(X, \bigwedge^{\ell} T^*X \right)$ satisfying the following conditions:
\begin{enumerate}
\item
$\alpha^n \weak \bara$ weakly in $L^p$;
\item
$\theta^n \weak \bart$ weakly in $L^q$;
\item
$\left| \alpha^n - \bara \right|^p\dvg \weak \mu$ weakly-$\star$ in $\M$;
\item
$\left| \theta^n - \bart \right|^q\dvg \weak \nu$ weakly-$\star$ in $\M$;
\item
$\dd\alpha^n \to \dd\bara$ strongly in $W^{-1,q'}$;
\item
$\dd^* \theta^n \to \dd^*\bart$ strongly in $W^{-1,p'}$. 
\end{enumerate}
Then we have the following convergence (modulo subsequences) in the sense of  distributions:
\begin{equation*}
\bra \alpha^n, \theta^n\ket_g \longrightarrow  \bra\bara, \bart\ket_g + \sum_{k=1}^\infty {\rm div}_g\left(r^k\delta_{x^k}\right) \qquad \text{in } \mathcal{D}'
\end{equation*}
for some sequences of points $\left\{x^k\right\} \subset X$ and vectors $\left\{r^k\right\}$ such that
\begin{align*}
\left|r^k\right| \lesssim_{p,q,N,X} \left[\mu\left(\left\{x^k\right\}\right)\right]^{\frac{1}{p}}\left[\nu\left(\left\{x^k\right\}\right)\right]^{\frac{1}{q}}.
\end{align*} 

Moreover, if $\frac{1}{p}+\frac{1}{q}<1+\frac{1}{N}$, then all $r^k$ are zero.
 \end{corollary}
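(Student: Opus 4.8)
The plan is to deduce Corollary~\ref{cor: div-curl} from Theorem~\ref{thm: wedge} by a Hodge-star manipulation that turns the metric pairing $\bra\alpha^n,\theta^n\ket_g$ into a wedge product and the codifferential constraint on $\theta^n$ into an exterior-differential constraint. First I would set $\beta^n := \star\theta^n$ and $\barb := \star\bart$, where $\star$ is the Hodge star operator on $(X,g)$, so that $\beta^n$ is an $(N-\ell)$-form. Since $\star$ is an isometry of the exterior algebra that is bounded on every $L^r$ (it is a pointwise algebraic operation with constant coefficients in a local orthonormal coframe), conditions (2) and (4) of the corollary translate directly into conditions (2) and (4) of Theorem~\ref{thm: wedge} for $\beta^n$ with $\ell_2 = N-\ell$; likewise $\alpha^n$ plays the role of the $\ell_1 = \ell$-form with conditions (1), (3), (5) carried over verbatim. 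For the differential constraint, I would invoke the standard identity $\dd^* = \pm \star \dd \star$ (signs depending on $N$, $\ell$, and the signature, which is Riemannian here), so that $\dd\beta^n = \pm\star\dd^*\theta^n$; hence the strong $W^{-1,p'}$-convergence of $\dd^*\theta^n$ in hypothesis (6) is equivalent, $\star$ being a bounded bijection on the relevant Sobolev-type spaces, to strong $W^{-1,p'}$-convergence of $\dd\beta^n$, which is exactly hypothesis (6) of Theorem~\ref{thm: wedge}.

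With all six hypotheses of Theorem~\ref{thm: wedge} verified, I would apply it to obtain
\begin{equation*}
\alpha^n \wedge \beta^n \longrightarrow \bara \wedge \barb + \sum_{k=1}^\infty \dd\left(v^k\delta_{x^k}\right)\qquad\text{in }\mathcal{D}',
\end{equation*}
with $\left|v^k\right|\lesssim_{p,N,X}[\mu(\{x^k\})]^{1/p}[\nu(\{x^k\})]^{1/q}$ and all $v^k=0$ when the inequality in \eqref{p,q condition} is strict. The next step is to translate this back. Since $\ell_1+\ell_2 = N$, the wedge product $\alpha^n\wedge\beta^n = \alpha^n\wedge\star\theta^n$ is a top-degree form, and by the defining property of $\star$ one has $\alpha^n\wedge\star\theta^n = \bra\alpha^n,\theta^n\ket_g\,\dd{\rm vol}_g$; identifying top forms with scalars via the volume form, this is precisely $\bra\alpha^n,\theta^n\ket_g$, and similarly for the limit term $\bara\wedge\barb = \bra\bara,\bart\ket_g\,\dd{\rm vol}_g$. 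For the concentration term, I would use that $\dd$ acting on an $(N-1)$-form corresponds, under the musical/$\star$ identifications, to the divergence ${\rm div}_g$ acting on the dual vectorfield: writing $r^k$ for the vectorfield metrically dual to $\star v^k$ (an $(N-1)$-covector, hence a $1$-form after one more $\star$, hence a vectorfield after raising the index), one gets $\dd(v^k\delta_{x^k}) = {\rm div}_g(r^k\delta_{x^k})\,\dd{\rm vol}_g$ in $\mathcal{D}'$. Since all these identifications are pointwise isometries, $|r^k| = |v^k|$, so the stated bound on $|r^k|$ and the vanishing in the subcritical range follow immediately.

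The one point requiring a little care — and the main (mild) obstacle — is bookkeeping the degree shifts and sign conventions in the three successive uses of $\star$, together with making sure that each is bounded on the precise function spaces in play: $L^r$ for $1<r<\infty$ (clear, as $\star$ is algebraic), $W^{1,r}$ (clear, since $\star$ commutes with covariant differentiation as $\na\star = \star\na$ for the Levi-Civita connection), and the negative Sobolev spaces $W^{-1,r}$ (by duality, $\star$ being self-adjoint up to sign with respect to the $L^2$ pairing). None of this is deep, but it must be laid out cleanly so that the equivalence of hypotheses (6) of the corollary and of the theorem, and the identity $\dd(v^k\delta_{x^k}) = {\rm div}_g(r^k\delta_{x^k})\,\dd{\rm vol}_g$, are beyond doubt. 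I would phrase the argument so that the case $\ell_1+\ell_2 = N$ specialisation of the "Remark" following Theorem~\ref{thm: wedge} — interpreting the conclusion against test forms — makes the scalar reinterpretation transparent: pairing a top form against $\CC^\infty$ functions recovers exactly the distributional statement for $\bra\alpha^n,\theta^n\ket_g$.
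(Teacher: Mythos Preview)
Your proposal is correct and follows essentially the same route as the paper's proof: set $\beta^n := \star\theta^n$, verify that the hypotheses of Theorem~\ref{thm: wedge} transfer (using that $\star$ is a pointwise isometry and that $\dd^* = \pm\star\dd\star$), apply the theorem, and then translate the resulting top-form statement back to scalars via $\star$ and the musical isomorphism, identifying $r^k$ with $(\star v^k)^\sharp$. Your write-up is in fact more careful than the paper's about the boundedness of $\star$ on the negative Sobolev spaces and about the sign/degree bookkeeping, but there is no substantive difference in strategy.
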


Recall that $\bra\alpha, \theta\ket_g := \star\left[\alpha \wedge \star \theta\right]$ for differential $\ell$-forms  $\alpha$ and $\theta$ on $X$.

When $\ell=1$ (with differential one-forms identified with vectorfields via the musical isomorphisms), Corollary~\ref{cor: div-curl} reduces to \cite[Theorem~2.3]{bcm} in Briane--Casado-D\'{i}az--Murat. The Hodge star $\star$,  the codifferential $\dd^* = (-1)^{N(\ell+1)+1}\star \dd\star$, the divergence ${\rm div}_g$, and the inner product $\bra \bullet, \bullet\ket_g$ are all taken with respect to the Riemannian metric $g$. Meanwhile, as commented beneath the statement of Theorem~\ref{thm: wedge}, when $\frac{1}{p} + \frac{1}{q} <1$, classical div-curl lemma or wedge product theorem applies to yield that $\bra\alpha^n,\theta^n\ket_g \to \bra\bara,\bart\ket_g$ in the sense of distributions.

\begin{proof}[Proof of Corollary~\ref{cor: div-curl}]
Applying Theorem~\ref{thm: wedge} to $\alpha^n$ and $\beta^n := \star \theta^n \in L^q\left(X,\bigwedge^{N-\ell}T^*X\right)$, one obtains that
\begin{align*}
\alpha^n \wedge  \beta^n	 \longrightarrow \bara \wedge \barb + \sum_{k=1}^\infty \dd  \left(v^k\delta_{x^k}\right) \qquad \text{ in } \mathcal{D}'\left(X, \bigwedge^{N}T^*X\right),
\end{align*}
where $v_k$ are $(N-1)$-covectors. We conclude the proof by taking the Hodge star on both sides and identifying $r^k$ with $\star v^k$ via the musical isomorphism $ \sharp: TX \cong T^*X$, using the fact that both Hodge star and $\sharp$ are Sobolev space isometries. \end{proof}

\subsection{Smooth cycles} We observe from Theorem~\ref{thm: wedge} that, even in the critical case $\frac{1}{p} + \frac{1}{q} = \frac{1}{N} + 1$, the concentration distribution is ``exact''. Thus we make the following observation (where $\langle\bullet,\bullet\rangle$ denotes the duality paring between currents and differential forms):
\begin{corollary}
Let $\left(X^N,g\right)$ be an $N$-dimensional closed Riemannian manifold; $N \geq 2$. Let $1<p,q<\infty$ be such that $\frac{1}{p} + \frac{1}{q} = \frac{1}{N} + 1$. Consider differential forms $\{\alpha^n\}$ and $\{\beta^n\}$ as in Theorem~\ref{thm: wedge}, where $\ell_1 + \ell_2 = N = \dim X$. Then for any smooth  N-current $T \in \mathcal{R}^N(X)$ which is a cycle (\emph{i.e.}, $\p T = 0$), we have $\bra T, \alpha^n \wedge \beta^n \ket \to \bra T, \bara \wedge \barb\ket$ as $n \to \infty$. 
\end{corollary}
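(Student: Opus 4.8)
The plan is to invoke Theorem~\ref{thm: wedge} and then exploit the hypothesis $\partial T = 0$ to annihilate the concentration distribution. First I would observe that, since $\ell_1 + \ell_2 = N$, the wedge product $\alpha^n \wedge \beta^n$ is a top-degree form, so by Theorem~\ref{thm: wedge} we have
\begin{equation*}
\alpha^n \wedge \beta^n \longrightarrow \bara \wedge \barb + \sum_{k=1}^\infty \dd\left(v^k \delta_{x^k}\right) \qquad \text{in } \mathcal{D}',
\end{equation*}
after passing to a subsequence, where each $v^k$ is an $(N-1)$-covector. Testing against the smooth $N$-current $T$, and using that distributional convergence of the top-degree forms $\alpha^n \wedge \beta^n$ is exactly weak convergence of the associated $N$-currents, gives
\begin{equation*}
\bra T, \alpha^n \wedge \beta^n \ket \longrightarrow \bra T, \bara \wedge \barb \ket + \sum_{k=1}^\infty \bra T, \dd\left(v^k \delta_{x^k}\right)\ket.
\end{equation*}

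Next I would show each term $\bra T, \dd(v^k \delta_{x^k})\ket$ vanishes. By the very definition of the boundary operator on currents (adjoint to $\dd$ on forms), $\bra T, \dd\omega\ket = \bra \partial T, \omega\ket$ for any smooth form $\omega$, and by density/continuity this extends to the distributional form $v^k \delta_{x^k}$: concretely, one approximates $v^k\delta_{x^k}$ by smooth forms, or directly uses the smoothness of $T$ to pair it against the distribution $\dd(v^k\delta_{x^k})$. Since $T$ is smooth, $\partial T$ acts on distributional forms by pairing the smooth form $\partial T$ against them; and $\partial T = 0$ by hypothesis. Hence $\bra T, \dd(v^k \delta_{x^k})\ket = \bra \partial T, v^k \delta_{x^k}\ket = 0$ for every $k$. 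Summing over $k$ (the series converges absolutely by the bound $\sum_k |v^k| < \infty$ noted in the proof of Theorem~\ref{thm: wedge}, combined with the uniform bound on $T$ as a smooth current) yields $\sum_{k=1}^\infty \bra T, \dd(v^k\delta_{x^k})\ket = 0$, and the conclusion follows.

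The one subtlety worth spelling out carefully — which I expect to be the main (albeit minor) obstacle — is making rigorous the identity $\bra T, \dd(v^k\delta_{x^k})\ket = \bra \partial T, v^k\delta_{x^k}\ket$ when $v^k\delta_{x^k}$ is merely a distributional form rather than a smooth one. The cleanest route is to note that $T$, being a smooth $N$-current on the $N$-manifold $X$, is represented by integration against a smooth (possibly with density) top-degree object, so pairing $T$ with any compactly supported distributional form of complementary degree is well defined and the adjoint relation $\bra T, \dd S\ket = \bra \partial T, S\ket$ holds for all such $S$ by the definition of $\partial$ on the space of currents dual to $\mathcal{D}'$-forms; alternatively, mollify $v^k \delta_{x^k}$ to smooth forms $\omega_\epsilon \to v^k\delta_{x^k}$ in $\mathcal{D}'$, apply the adjoint relation to each $\omega_\epsilon$ (legitimate since $\omega_\epsilon$ is smooth), use $\partial T = 0$ to get $\bra T, \dd\omega_\epsilon\ket = 0$ for all $\epsilon$, and pass to the limit using continuity of the smooth current $T$ and of $\dd$ on $\mathcal{D}'$. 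Either way the argument is short; the substance of the corollary is entirely in the exactness of the concentration distribution already established in Theorem~\ref{thm: wedge}, and in the trivial observation that a cycle kills exact forms.
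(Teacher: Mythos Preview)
Your proposal is correct and follows essentially the same route as the paper: invoke Theorem~\ref{thm: wedge}, then use the adjoint relation $\bra T,\dd(\cdot)\ket=\bra\p T,\cdot\ket$ together with $\p T=0$ to kill the concentration distribution. The paper's proof is terser (it also offers an equivalent rephrasing via $\dd^*\circ\dd^*=0$ for the smooth vectorfield representing $T$), but the substance is identical.
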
  

\begin{proof}
This follows from $\bra T, \dd\left(\sum_k v^k\delta_{x^k}\right)\ket = \bra \p T, \sum_k v^k\delta_{x^k}\ket = 0$, where $ \dd\left(\sum_k v^k\delta_{x^k}\right)$   is the concentration distribution as in Theorem~\ref{thm: wedge}. Alternatively, rectifiable currents  with $\CC^\infty$-regularity can be represented by smooth vectorfields, for which the cycle condition $\p T=0$ is equivalent to the divergence-free condition. So the corollary also follows from $\dd^*\circ \dd^* = 0.$  \end{proof}

\section{The ``endpoint critical'' cases $\{p,q\} = \{1,N\}$}\label{sec: endpoint critical case}

In this section we discuss the generalisation of \S\S 3 $\&$ 4 in \cite{bcm}, \emph{i.e.}, the cases $(p,q) = (1,N)$ or $(p,q) = (N,1)$, to differential forms on closed manifolds. These are the endpoint cases of the critical regime $\frac{1}{p}+ \frac{1}{q} = \frac{1}{N}+1$, for which we need stronger compactness assumptions for $\dd\alpha^n$ and $\dd\beta^n$ (namely, Assumptions~(5)+(6) in Theorem~\ref{thm: endpoint}) to ensure the distributional convergence of wedge products. 

For the convenience of readers, let us reproduce Theorem~\ref{thm: endpoint} below.

\begin{theorem*}
Let $\left(X^N,g\right)$ be an $N$-dimensional closed Riemannian manifold; $N \geq 2$. 
Consider two sequences of differential forms $\left\{ \alpha^n \right\}\subset \M\left(X, \bigwedge^{\ell_1} T^*X \right)$ and $\left\{ \beta^n \right\}\subset L^N\left(X, \bigwedge^{\ell_2} T^*X \right)$ satisfying the following conditions:
\begin{enumerate}
\item
$\alpha^n \weak \bara$ weakly-$\star$ in $\M$;
\item
$\beta^n \weak \barb$ weakly in $L^N$;
\item
$\left| \alpha^n - \bara \right| \weak \mu$ weakly-$\star$ in $\M$;
\item
$\left| \beta^n - \barb \right|^N  \weak \nu$ weakly-$\star$ in $\M$;
\item
$\dd\alpha^n \to \dd \bara$ strongly in $W^{-1,N'}$;
\item
$\dd\beta^n \to \dd \barb$ strongly in $L^N$.
\end{enumerate}
Then we have the following convergence (modulo subsequences) in the sense of  distributions:
\begin{equation}
\alpha^n \wedge \beta^n \longrightarrow  \bara \wedge \barb + \sum_{k=1}^\infty \dd \left(v^k\delta_{x^k}\right) \qquad \text{in } \mathcal{D}'
\end{equation}
for some sequences of points $\left\{x^k\right\} \subset X$ and $(\ell_1+\ell_2-1)$-covectors $\left\{v^k\right\}$. Moreover,
\begin{equation*}
\left|v^k\right| \lesssim_{N,X}
\left[\mu\left(\left\{x^k\right\}\right)\right]\left[\nu\left(\left\{x^k\right\}\right)\right]^{\frac{1}{N}}.
\end{equation*}
\end{theorem*}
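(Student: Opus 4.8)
The plan is to mirror the structure of the proof of Theorem~\ref{thm: wedge} given in \S\ref{sec: proof of wedge}, replacing the Hilbert/reflexive ingredients by their endpoint analogues. First I would establish a Hodge-type decomposition for the two sequences adapted to the endpoint exponents $(p,q) = (1,N)$. The measure-valued form $\alpha^n$ decomposes (via the $\M$-valued Hodge decomposition, which is available since the harmonic projection has finite-dimensional $\CC^\infty$ range) as $\alpha^n = \dd\gamma_n + \xi^n$ with $\{\xi^n\}$ strongly convergent in the appropriate dual Lebesgue class and $\{\gamma_n\}$ bounded in $W^{1,1}$; by Gaffney's inequality and assumption~(5) this $W^{1,1}$-bound is genuine. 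Simultaneously $\beta^n = \dd\zeta_n + \eta^n$ with $\{\zeta_n\}$ bounded in $W^{1,N}$ and $\{\eta^n\}$ convergent, where crucially assumption~(6) ($\dd\beta^n \to \dd\barb$ \emph{strongly in $L^N$}, not merely in a negative Sobolev space) forces the exact part $\dd\zeta_n$ itself to converge strongly in $L^N$. As in \S\ref{subsec: weak weak pairing}, bilinearity plus $\dd\circ\dd = 0$ reduces the convergence of $\alpha^n\wedge\beta^n$ to understanding the single weak-weak term $\dd\gamma_n\wedge\dd\zeta_n = \dd(\gamma_n\wedge\dd\zeta_n)$, and the remaining three terms in the expansion are weak-strong or strong-strong pairings that converge in $L^1$ (or $\M$).

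Next I would analyze the critical term $\gamma_n \wedge \dd\zeta_n$. Since $\{\gamma_n\}$ is bounded in $W^{1,1}(X)$, we are at the endpoint Sobolev embedding $W^{1,1}\hookrightarrow L^{N'} = L^{N/(N-1)}$, which is \emph{not} compact and admits concentration. The right tool is the endpoint concentration-compactness lemma (the $W^{1,1}$ version of \cite[Lemma~I.1]{lions}, collected in the Appendix in the Euclidean setting and transferred to $X$), which yields a countable set $\{x^j\}\subset X$ and weights $c_j\geq 0$ with $|\gamma_n - \gamma|^{N'}\dvg \weak \sum_j c_j\delta_{x^j}$ and $c_j^{1/N'}\lesssim_{N,X} \mu(\{x^j\})$ — note here the power is $1$ rather than $1/p$ because $p=1$, which produces the linear dependence on $\mu(\{x^k\})$ claimed in the statement. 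Pairing this against $|\dd(\zeta_n-\zeta)|^N\dvg \weak \nu$ via Lemma~\ref{lem: appendix} (the product-of-concentrations estimate), I obtain a defect measure $\eth$ for $\gamma_n\wedge\dd\zeta_n$ supported on $\{x^j\}$ with $|\eth|(E)\leq [\sum_{x^j\in E}c_j]^{1/N'}[\nu(E)]^{1/N}$, hence atomic: $\eth = \sum_j v^j\delta_{x^j}$ with $|v^j|\lesssim_{N,X}\mu(\{x^j\})\,\nu(\{x^j\})^{1/N}$. Applying $\dd$ to \eqref{defect meas}-type convergence then gives $\alpha^n\wedge\beta^n \to \bara\wedge\barb + \dd(\sum_j v^j\delta_{x^j})$ in $\mathcal{D}'$, and summability $\sum_j|v^j|<\infty$ follows from finiteness of $\mu$ and $\nu$ exactly as in the non-endpoint proof.

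I expect the main obstacle to be the $W^{1,1}$-endpoint itself. Unlike the non-endpoint case, $W^{1,1}(X,\bigwedge^\bullet T^*X)$ is non-reflexive, so one cannot pass to weakly convergent subsequences of $\{\gamma_n\}$ in $W^{1,1}$; one only gets weak-$\star$ convergence of the gradients as measures and $L^1$-weak compactness fails. This is precisely where the stronger hypotheses enter: assumption~(6) upgrades $\dd\zeta_n$ to \emph{strong} $L^N$-convergence, so that $\gamma_n\wedge\dd\zeta_n$ is effectively a (weak-in-$L^{N'}$)$\times$(strong-in-$L^N$) pairing and one needs concentration-compactness only on the single factor $\gamma_n$; and the endpoint elliptic estimates à la Bourgain--Brezis \cite{bb04} and Brezis--Van Schaftingen \cite{bv}, together with Gaffney's inequality, are what guarantee the $W^{1,1}$-bound on $\gamma_n$ from the $W^{-1,N'}$-control of $\dd\alpha^n$ and the coexactness normalization $\dd^*\gamma_n = 0$. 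Carefully checking that the Euclidean concentration-compactness and endpoint-regularity lemmas in the Appendices transfer to the compact manifold $(X,g)$ — via a partition of unity and the usual localization, taking care that Hodge-harmonic forms are smooth and finite-dimensional — is the technical heart; once that is in place the wedge-product cancellation and the measure-theoretic bookkeeping run parallel to \S\ref{subsec, where conc comp is used}.
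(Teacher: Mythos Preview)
Your proposal contains a genuine error that derails the argument. You assert that assumption~(6), namely $\dd\beta^n\to\dd\barb$ strongly in $L^N$, forces the exact part $\dd\zeta_n$ to converge strongly in $L^N$. This is backwards: in the decomposition $\beta^n=\dd\zeta_n+\eta^n$ one has $\dd\beta^n=\dd\eta^n$ (since $\dd\circ\dd=0$), so assumption~(6) upgrades $\eta^n$ from strong-in-$L^{p'}$ to strong-in-$W^{1,N}$, \emph{not} $\dd\zeta_n$. The exact part $\dd\zeta_n=\beta^n-\eta^n$ converges only weakly in $L^N$. Your proposal is then internally inconsistent: your second paragraph correctly pairs against the defect $\nu$ of $|\dd(\zeta_n-\zeta)|^N$, while your first and third paragraphs claim $\dd\zeta_n$ is strong --- and if it were, $\gamma_n\wedge\dd\zeta_n$ would be a genuine weak--strong pairing converging in weak-$L^1$ with no concentration at all, so nothing would remain to prove. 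A second issue: the claim that $\{\gamma_n\}$ is bounded in $W^{1,1}$ via Gaffney's inequality fails, since Gaffney rests on Calder\'on--Zygmund theory which breaks at $p=1$; moreover $\alpha^n\in\M$ need not be absolutely continuous, so $\dd\gamma_n=\alpha^n-\xi^n$ is merely a measure, not an $L^1$-form.

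The paper takes a substantially different route. Rather than attempt an endpoint ``$W^{1,1}\hookrightarrow L^{N'}$'' concentration compactness on $\gamma_n$, it isolates the measure-valued piece of the decomposition, writes it as $\Delta\sigma^n$ with $\sigma^n\in W^{1,N'}$ via the Brezis--Van~Schaftingen endpoint estimate (Lemma~\ref{lem: brezis-vs}), integrates by parts against test forms to recast the critical pairing as $\int_X\dd\sigma^n\wedge\dd^*[\zeta^n\wedge\dd\varrho]$, and then establishes the localized concentration bound $\limsup_n\|\varphi\,\dd(\sigma^n-\sigma)\|_{L^{N'}}\lesssim\limsup_n\|\varphi\,\Delta(\sigma^n-\sigma)\|_{\M}$ by applying Lemma~\ref{lem: brezis-vs} to $\varphi(\sigma^n-\sigma)$ and controlling the commutator terms. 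The point is that the second-order estimate for $\Delta$ localizes cleanly under multiplication by cutoffs, whereas the first-order Bourgain--Brezis inequality underlying your $W^{1,1}$-route does not, because coexactness is destroyed by cutoffs. The stronger hypothesis~(6) enters not to strengthen $\dd\zeta_n$ but to put $\eta^n$ in $W^{1,N}$, which is what allows the pairings against the measure-valued pieces to make sense as distributions.
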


Recall that $\M\left(X, \bigwedge^\bullet T^*X\right)$ is understood as the space of $\bigwedge^\bullet T^*X$-valued finite Radon measures on $X$, where $\bigwedge^\bullet T^*X$ is a vector space. Thus the total variation measure (hence the total variation norm) of each of its elements is well defined. Also note that $N'=\frac{N}{N-1}$.

Our proof essentially follows the strategies in \cite[\S 3]{bcm}, and the idea is similar to that of Theorem~\ref{thm: wedge}, except that more delicate analysis for endpoint Sobolev embeddings are required. To avoid redundancies, we shall only elaborate on the necessary modifications.

\begin{proof}[Proof of Theorem~\ref{thm: endpoint}] We divide our arguments into six steps below.

\smallskip
\noindent
{\bf Step 1.} As for Theorem~\ref{thm: wedge}, consider the decomposition
\begin{equation*}
\begin{cases}
\alpha^n = \dd\gamma_n + \xi^n,\\
\beta^n = \dd\zeta^n +\eta^n.
\end{cases}
\end{equation*}
The proof for Proposition~\ref{prop: decomposition of diff forms in wedge prod} yields that
\begin{align*}
&\zeta^n \weak \zeta  \text{ weakly in } W^{1,N}\left(X,\bigwedge^{\ell_2-1}T^*X\right),\\
&\eta^n \longrightarrow \eta \text{ strongly in } W^{1,N}\left(X,\bigwedge^{\ell_2}T^*X\right),\\
& \barb = \dd\zeta + \eta.
\end{align*}

Estimates for $\gamma_n$ and $\xi^n$ pertain to the endpoint elliptic estimates \emph{\`{a} la} Bourgain--Brezis \cite{bb04} and Brezis--Van Schaftingen \cite{bv}; see Appendix~\ref{sec: appendix b}. One may choose $\gamma_n$ and $\xi^n$ such that
\begin{align*}
&\gamma_n \longrightarrow \gamma  \text{ strongly in } W^{1,N'}\left(X,\bigwedge^{\ell_1-1}T^*X\right),\\
&\xi^n \weak \xi \text{ weakly in } W^{-1,N'}\left(X,\bigwedge^{\ell_1}T^*X\right) \text{ and weakly-$\star$ in  }\M \left(X,\bigwedge^{\ell_1}T^*X\right),\\
& \bara = \dd\gamma + \xi.
\end{align*}

\smallskip
\noindent
{\bf Step 2.} Now let us define
\begin{align*}
\alpha^n \wedge \beta^n &:= \dd \gamma_n \wedge \dd\zeta^n  + \xi^n \wedge \dd\zeta^n + \dd\gamma_n \wedge \eta^n + \xi^n \wedge \eta^n \nonumber\\
&= J_1^n + J_2^n + J_3^n + J_4^n \qquad \text{in the sense of distributions}.
\end{align*}
Indeed, $J^n_3$ is a strong-strong pairing of $L^{N'}$ and $W^{1,N}$, while $J^n_4$ is a weak-strong pairing of $W^{-1,N'}$ and $W^{1,N}$; hence, they converge in the sense of distributions. Also, for $J_1^n$ one observes that $\gamma_n \wedge \dd\zeta^n$ is a weak-strong pairing of $L^N$ and $L^{N'}$, so it converges in the weak $L^1$-topology. As a consequence, we may define $\dd\gamma_n \wedge \dd\zeta^n := \dd \left(\gamma_n \wedge \dd\zeta^n\right)$ in the distributional sense.

It remains to investigate the (well-definedness as a distribution and) convergence of the weak-weak pairing $J_2^n$.

\smallskip
\noindent
{\bf Step 3.} To see that $$J_2^n:=
\xi^n \wedge \dd\zeta^n$$ is well defined as a differential form-valued distribution on $X$, one needs to prove that for any testform $\varrho \in \CC^\infty\left(X, \bigwedge^{N-\ell_1-\ell_2} T^*X\right)$, the following expression is well defined:
\begin{equation*}
I^n \equiv \int_X \xi^n \wedge \dd\zeta^n \wedge \varrho.
\end{equation*}
Clearly there is nothing to prove when $\ell_1 + \ell_2 > N$.

For this purpose, we integrate by parts  using the superdistributivity of $\dd$ over wedge product and the Stokes' theorem to define
\begin{align*}
I^n &= I_1^n + I_2^n\\
&:= \left[(-1)^{\ell_1+1}\int_X \dd\xi^n \wedge \zeta^n \wedge \varrho\right] + \left[(-1)^{\ell_2+1} \int_X\xi^n \wedge \zeta^n \wedge \dd\varrho\right].
\end{align*}

 For $I^n_1$, we make use of the decomposition $\alpha^n = \dd\gamma_n + \xi^n$ to infer that $\dd\alpha^n = \dd\xi^n$ in the sense of distributions. But it is assumed that $\dd\alpha^n \to \dd \bara$ strongly in $W^{-1,N'}$ (see assumption~(5)), so the term $\dd\xi^n \wedge \zeta^n$ appearing in the integrand of $I^n_1$ is a strong-weak pairing of $W^{-1,N'}$ and $W^{1,N}$, thus making $I^n_1$ well defined and converge as $n \to \infty$ to $(-1)^{\ell_1 + 1} \int_X \dd\xi \wedge \zeta \wedge \varrho$ as desired.

It now remains to investigate the convergence of
\begin{equation*}
\tilde{I}^n_2:=\int_X\xi^n \wedge \zeta^n \wedge \dd\varrho,
\end{equation*}
which is nothing but $I_2^n$ with the immaterial sign neglected. Here $\varrho$ is an arbitrary testform in $\CC^\infty\left(X, \bigwedge^{N-\ell_1-\ell_2} T^*X\right)$, and $\xi^n \wedge \zeta^n$ is a  weak-weak pairing of $W^{-1,N'}$ and $W^{1,N}$.

\smallskip
\noindent
{\bf Step 4.} To make sense of $\xi^n \wedge \zeta^n$,  recall that $\xi^n$ is chosen to be coexact: by Hodge decomposition $\xi^n = \dd^* k^n + h^n$ where $h^n$ is harmonic. Then one may find from Lemma~\ref{lem: brezis-vs} that $$\sigma^n = \Delta^{-1}\xi^n \in W^{1,N'}\left(X;\bigwedge^{\ell_1}T^*X\right),$$ which is unique by fixing the cohomology class once and for all. Moreover, we have $\dd^*\sigma^n=0$. 

The above arguments lead to
\begin{align}\label{sigma, step4}
\tilde{I}^n_2 &= \int_X \Delta \sigma^n \wedge \zeta^n \wedge \dd\varrho \\ 
&= \int_X \left(\dd\dd^*\sigma^n+\dd^*\dd \sigma^n\right) \wedge \zeta^n \wedge \dd\varrho\\
&= \int_X \dd\sigma^n \wedge \dd^*\left[\zeta^n\wedge \dd\varrho\right]. 
\end{align}
In the right-most term, $\left\{\dd\sigma^n\right\}$ is weakly convergent in $L^{N'}$, and $\left\{\dd^*\left[\zeta^n\wedge \dd\varrho\right]\right\}$ is weakly convergent in $L^{N}$, after passing to subsequences if necessary. Denote the weak limits of $\left\{\sigma^n\right\}$ in $W^{1,N'}$ (existence follows from the quantitative statement in Lemma~\ref{lem: brezis-vs}) and $\left\{\zeta^n\right\}$ in $W^{1,N}$ by $\sigma$ and $\zeta$, respectively.   We are now in the situation of applying Lemma~\ref{lem: appendix} to obtain a defect measure $\varpi = \varpi[\varrho] \in \M\left(X;\bigwedge^N T^*X \right)$ that is linear in the testform $\varrho$:
\begin{align*}
\lim_{n \to \infty} \tilde{I}^n_2 = \int_X \dd\sigma \wedge \dd^*\left[\zeta\wedge\dd\varrho\right] +  \varpi (X).
\end{align*}
By Stokes' theorem and the fact that $\dd^*\sigma=0$ in the sense of distributions, we can rewrite this identity as follows:
\begin{align*}
\lim_{n \to \infty} \tilde{I}^n_2 &= \int_X \Delta \sigma \wedge \zeta \wedge \dd\varrho + \varpi (X) \\&= \int_X \xi \wedge \zeta \wedge \dd\varrho + \varpi (X). 
\end{align*}

It thus remains to investigate the defect measure(-valued differential $N$-form) $\varpi$.

\smallskip
\noindent
{\bf Step 5.} Consider the limiting measures 
\begin{align*}
&\lambda := \M-\lim_{n\to\infty} \Big|\dd^*\big[\zeta^n \wedge \dd\varrho\big] - \dd^*\big[\zeta \wedge \dd\varrho\big] \Big|^N,\\
&\lambda' := \M-\lim_{n\to\infty} \Big|\dd \left(\sigma^n-\sigma\right) \Big|^{N'}.
\end{align*}
We \emph{claim} that the total variation of $\lambda$ is controlled by the measure $\nu$ in assumption~(4). That is,
\begin{equation}\label{claim, lambda and nu}
|\lambda|(B) \lesssim _{(X,g), \|\varrho\|_{\CC^2}} \nu(B)\qquad \text{ for any Borel subset } B \subset X.
\end{equation}

\begin{proof}[Proof of the claim~\eqref{claim, lambda and nu}]
It suffices to prove for the case that $B = {\bf B}_1(0) \subset \R^N$ and that $X$ is Euclidean flat.

To see this, let $\left\{\chi_1, \ldots, \chi_K\right\}$ be a $\CC^\infty$-partition of unity  on the compact manifold $(X,g)$  subordinate to an atlas of $\CC^\infty$-charts  $\left\{O_1, \ldots, O_K\right\}$, each of which is diffeomorphic to ${\bf B}_1(0)$. Then, viewing $|\bullet|^N$ as positive measures on $X$, for any Borel subset $B\subset X$ we have that
\begin{align*}
\Big|\dd^*\big[\zeta^n \wedge \dd\varrho\big] - \dd^*\big[\zeta \wedge \dd\varrho\big] \Big|^N(B)&=\left|\dd^*\left[\left(\zeta^n - \zeta\right) \wedge \dd\varrho\left(\sum_{i=1}^K \chi_i\right)\right] \right|^N(B)\\
&\lesssim_{N,K} \sum_{i=1}^K \Big| \dd^* \left[\chi_i(\zeta^n-\zeta) \wedge \dd\varrho\right] \Big|^N(B)\\
&\lesssim_{K,N,(X,g)} \left\|\varrho\right\|_{\CC^2}^N\cdot\left\{\sum_{i=1}^K \Big|\na \left(\chi_i\zeta^n-\chi_i\zeta\right)\Big|^N(B)\right\}.
\end{align*}
The constant involved in the last inequality depends on the $\CC^1$-geometry of $(X,g)$. One may thus infer from the Leibniz rule, the convergence $|\zeta^n-\zeta|^N \to 0$ as $n \to \infty$, and the defining properties of the partition of unity $\{\chi_i\}_1^K$ that 
\begin{align*}
\Big|\dd^*\big[\zeta^n \wedge \dd\varrho\big] - \dd^*\big[\zeta \wedge \dd\varrho\big] \Big|^N (B) &\lesssim_{(X,g),\left\|\varrho\right\|_{\CC^2}} \left\{\sum_{i=1}^K \Big|\chi_i\na \left(\zeta^n-\zeta\right)\Big|^N\left(B\cap O_i\right)\right\} + \mathfrak{o}_n(1),
\end{align*}
where $\lim_{n\to\infty}\mathfrak{o}_n(1) = 0$. Thus, it is enough to prove for $\left(\chi_i\zeta^n,\chi_i\zeta\right)$ in lieu of $\left(\zeta^n,\zeta\right)$ in \eqref{claim, lambda and nu}; or, equivalently, we may assume that $B$ is contained in one single chart $O_i$.


A similar computation for commuting $\na$ with the diffeomorphism between the chart containing the support of $\chi_i$ and the Euclidean ball ${\bf B}_1(0)$ shows that \eqref{claim, lambda and nu} is invariant under such diffeomorphisms. Hence, we may assume without loss of generality that $B={\bf B}_1(0)\subset \R^N$ and that $\chi_i\zeta^n-\chi_i\zeta$ is coexact and compactly supported in $B$ for each fixed $i \in \{1, \ldots, K\}$. We relabel $\left(\chi_i\zeta^n,\chi_i\zeta\right)$ in the above as $\left(\zeta^n,\zeta\right)$ from now on. 

It remains to argue, in view of the previous reduction, that 
\begin{align}\label{gaffney in RN}
\Big|\na\left(\zeta^n-\zeta\right)\Big|^N(B) \lesssim \Big|\dd \left(\zeta^n-\zeta\right)\Big|^N(B). 
\end{align}
Assuming \eqref{gaffney in RN} and recalling from Step~1 that $\dd \left(\zeta^n-\zeta\right) = \left(\beta^n-\barb\right) - \left(\eta^n-\eta\right)$, where $\eta^n \to \eta$ strongly in $W^{1,N}$ and $\left|\beta^n-\barb\right|^N\weak \nu$ in $\M$ by Assumption~(4), we may conclude the \emph{claim}. 

To see \eqref{gaffney in RN}, we consider a 
standard mollifier $\mathcal{J}_\delta$ acting on $\left|\na\left(\zeta^n-\zeta\right)\right|^N$, which is well defined since $\zeta^n-\zeta$ is compactly supported in $B = {\bf B}_1(0) \subset \R^N$. Then for $$s^n:=\na\left(\zeta^n-\zeta\right)$$ we have that
\begin{align*}
\Big|\left\|s^n\right\|_{L^N(B)} - \left\|s^n\star\mathcal{J}_\delta\right\|_{L^N(B)}\Big| &\leq \left\|s^n-s^n \star \mathcal{J}_\delta\right\|_{L^N(B)} \lesssim \mathfrak{o}_n(1) \quad \text{ as } \delta \to 0^+. 
\end{align*}
So one can further assume $\zeta^n, \zeta \in \CC^\infty$. In this case, since $\dd^*(\zeta^n-\zeta)=0$ and $\zeta^n-\zeta \in W^{1,N}_0\left(B;\bigwedge^{\ell_2-1} \R^N\right)$ (so that the usual Poincar\'{e}'s inequality applies), we deduce \eqref{gaffney in RN} directly from the Gaffney's inequality. See \cite{gaff1, gaff2, gaff3, gaff4} among other references.  

The proof of the \emph{claim} is now complete.   
\end{proof}

\smallskip
\noindent
{\bf Step 6.} As argued in \S\ref{subsec, where conc comp is used}, it  remains to prove the bound
\begin{equation}\label{LN'-->L1 bd}
\left\{ \int_X|\varphi|^{N'}\,\dd\lambda' \right\}^{\frac{1}{N'}}   \lesssim_{N,X}   \int_X|\varphi|\,\dd \mu \quad \text{for any } \varphi \in \CC^\infty(X).
\end{equation}
Once this is established, an application of the second concentration compactness lemma \emph{\`{a} la} P.-L. Lions \cite[p.158, Lemma~I.1]{lions} together with Lemma~\ref{lem: appendix} readily concludes the proof. See \eqref{conc comp} and the ensuing arguments, as well as \cite[p.486, from Equation~(50) to the end of the proof of Theorem~3.1]{bcm}.

To this end, observe that
\begin{align}\label{Mlim}
\M-\lim_{n\to\infty}\left| \Delta \left(\sigma^n-\sigma\right)\right| &= \M-\lim_{n\to\infty}\left|\xi^n-\xi\right| \nonumber\\
&= \M-\lim_{n\to\infty}\left|\left(\alpha^n - \dd\gamma_n\right) - \left(\alpha-\dd\gamma\right)\right| \nonumber\\
&= \M-\lim_{n\to\infty} \left|\alpha^n-\alpha\right| \nonumber\\
&= \mu.
\end{align}
The penultimate equality follows from  the strong convergence $\dd\gamma_n \to \dd\gamma$ in $L^{N'}$, and the final one from the definition of $\mu$. Also, as $\sigma^n$ is coexact, so is $\Delta \left(\sigma^n-\sigma\right)$. By \eqref{Mlim} and  $\lambda' := \M-\lim_{n\to\infty} \left|\dd \left(\sigma^n-\sigma\right) \right|^{N'}$, we see that \eqref{LN'-->L1 bd} follows immediately from the bound
\begin{align}\label{LN'-->L1 bd, eqvt}
&\limsup_{n \to \infty} \left\| \varphi\, \dd\left(\sigma^n-\sigma\right) \right\|_{L^{N'}(X)} \nonumber\\
&\qquad\qquad\lesssim_{N,X} \limsup_{n \to \infty} \left\|\varphi\, \Delta \left(\sigma^n-\sigma\right)\right\|_{\M(X)}\quad \text{for any } \varphi \in \CC^\infty(X).
\end{align}
In contrast to \eqref{LN'-->L1 bd}, the norms $\|\bullet\|_{L^{N'}(X)}$ and $\|\bullet\|_{\M(X)}$ are taken with respect to the Riemannian metric $g$ as usual.

To prove \eqref{LN'-->L1 bd, eqvt}, notice that it is equivalent to 
\begin{align}\label{LN'-->L1 bd, eqvt2}
\limsup_{n \to \infty} \left\|\dd \left\{\varphi\left(\sigma^n-\sigma\right)\right\} \right\|_{L^{N'}(X)} &\lesssim_{N,X} \limsup_{n \to \infty} \left\| \Delta\left\{\varphi\left(\sigma^n-\sigma\right)\right\}\right\|_{\M(X)},
\end{align}
in view of the identities 
\begin{align*}
\dd \left[\varphi\left(\sigma^n-\sigma\right)\right] = \dd\varphi \wedge\left(\sigma^n-\sigma\right) +  \varphi\, \dd\left(\sigma^n-\sigma\right),
\end{align*}
where $\dd\varphi \wedge\left(\sigma^n-\sigma\right) \to 0$ in $L^{N'}$, and 
\begin{align*}
 \Delta\left\{\varphi\left(\sigma^n-\sigma\right)\right\} = \left(\Delta\varphi\right)\left(\sigma^n-\sigma\right) + 2g^{ij} \na_i\varphi\na_j\left(\sigma^n-\sigma\right) + \varphi \Delta \left(\sigma^n-\sigma\right),
\end{align*}
where $\left(\Delta\varphi\right)\left(\sigma^n-\sigma\right) \to 0$ in $L^{N'}$ and $g^{ij} \na_i\varphi\na_j\left(\sigma^n-\sigma\right) \weak 0$ weakly-$\star$ in $\M$.

It remains to justify \eqref{LN'-->L1 bd, eqvt2}. For the moment, let us work with \emph{an additional assumption}: 
\begin{align}\label{additional assumption}
\text{$X$ has no nontrivial harmonic $\ell_1$-form, namely that $H_{\rm dR}^{\ell_1}(X)=\{0\}$}.
\end{align}
It then follows from the endpoint elliptic estimate in Lemma~\ref{lem: brezis-vs} that
\begin{align}\label{zzz}
&\left\|\dd \left\{\varphi\left(\sigma^n-\sigma\right)\right\} \right\|_{L^{N'}(X)} \lesssim_{N,X} \left\| \Delta\left\{\varphi\left(\sigma^n-\sigma\right)\right\}\right\|_{\M(X)} + \left\|\dd^*\Delta\left\{\varphi\left(\sigma^n-\sigma\right)\right\}\right\|_{W^{-2,N'}(X)}.
\end{align}
The last term on the right-hand side vanishes under $\limsup_{n \to \infty}$, because
\begin{align*}
\dd^*\Delta\left\{\varphi\left(\sigma^n-\sigma\right)\right\} &= \Delta\dd^*\left\{\varphi\left(\sigma^n-\sigma\right)\right\}\\
&= (-1)^{N(\ell_1+1)+1} \Delta\star \dd\left\{ \varphi \left[\star\left(\sigma^n-\sigma\right)\right]\right\} \\
&= (-1)^{N(\ell_1+1)+1}  \Delta \star\Big\{ \dd\varphi \wedge \left[\star\left(\sigma^n-\sigma\right)\right] \Big\} + \Delta \left[\varphi\, \dd^*\left(\sigma^n-\sigma\right)\right] \\
&=: A_1+A_2,
\end{align*}
thanks to the commutativity between $\dd^*$ and $\Delta$, the definition of codifferential $\dd^*$, and the superdistributivity of $\dd$ under wedge product. The term $A_2$ is zero in $W^{-2,N'}$ since $\sigma^n-\sigma \in W^{1,N'}$ is coexact, while $\lim_{n \to \infty}\|A_1\|_{W^{-2,N'}(X)}=0$ since $\sigma^n - \sigma$ converges weakly to zero in $W^{1,N'}$, and hence strongly in $L^{N'}$ by Rellich's lemma.

Thus, \eqref{LN'-->L1 bd, eqvt2} is established under the hypothesis \eqref{additional assumption}, from which \eqref{LN'-->L1 bd} follows. But by virtue of a standard partition of unity argument, the assertion we are after is essentially local; \emph{i.e.}, it suffices to prove for the case that $\alpha^n$ and $\beta^n$ are compactly supported in one chart $O_i$. See the proof for \emph{claim}~\eqref{claim, lambda and nu} in Step~5 above for the notations and details of arguments. Thus the assumption in \eqref{additional assumption} can be safely removed.

The proof of Theorem~\ref{thm: endpoint} is now complete.   \end{proof}

\section{Application to Gauss--Codazzi-Ricci equations for isometric immersions}\label{sec: isom imm}

Our main result of this section is the following, which generalises Theorem~\ref{thm: isom imm, 2D} in the Introduction \S\ref{sec: intro} to arbitrary dimensions and codimensions.

\begin{theorem}[Weak continuity of Gauss--Codazzi--Ricci equations]\label{thm: isom imm}
Let $\left(X,g\right)$ be a Riemannian immersed submanifold of dimension $N \geq 2$ in $\R^{N+k}$. Consider a family $\left\{\sol^\e = (\two^\e, \na^{\perp,\e})\right\}$ of weak solutions to the Gauss--Codazzi--Ricci equations that converges to $\overline{\sol}$ in the weak $L^p_\loc$-topology; $p \in \left]\frac{2N}{N+1},\infty\right[$. Suppose  the coexact parts of $\left\{\sol^\e\right\}$ are precompact in strong $L^{p'}_\loc$-topology. Then $\overline{\sol}$ is a weak solution to the Gauss--Codazzi--Ricci equations. 
 \end{theorem}
 
 \begin{definition}

For a closed manifold $X$ and any tensor or affine connection $\mathcal{T}$ on it, we set 
\begin{equation}\label{coexact part, leray}
\text{coexact part of } \mathcal{T} = \left({\bf Id} - \dd\Delta^{-1}\dd^*\right) \mathcal{T},
\end{equation}
where $\Delta^{-1}$ is defined with respect to any fixed cohomology group. 
 \end{definition}
 Note that for a vectorfield $V \in \G(TX)$, the expression $\left({\bf Id} - \dd\Delta^{-1}\dd^*\right) V$ modulo musical isomorphisms between $TX$ and $T^*X$ is the well-known \emph{Leray projection} of $V$.
 
 As the main scope of the current paper is on wedge product theorems in compensated compactness theory, we refrain ourselves from giving a detailed exposition on  isometric immersions and/or the Gauss--Codazzi--Ricci equations. 
 See do Carmo \cite[Chapter~6]{doc} and Tenenblat \cite{ten} for the derivation of these equations, and the monograph \cite{hh} for histories and up-to-date developments of the isometric immersions problem.

Here and hereafter, for an isometric  immersion $\Phi: (X,g) \to (\rnk,\delta)$ (where $\delta$ denotes the Euclidean metric),  the curvature components in the tangential and normal directions of $\Phi$ together constitute the flat Riemann curvature on $\rnk$. This gives rise to the \emph{Gauss--Codazzi--Ricci} equations:
\begin{eqnarray}
&& \delta\big(\two(X,Z), \two (Y,W)\big) - \delta\big( \two(X,W),\two(Y,Z) \big) = R(X,Y,Z,W),\label{gauss}\\
&& \overline{\na}_Y\two(X,Z) - \overline{\na}_X\two(Y,Z)=0,\label{codazzi}\\
&& g\big([\mathcal{S}_\eta, \mathcal{S}_{\zeta}] X, Y\big) = R^E(X,Y,\eta,\zeta),\label{ricci}
\end{eqnarray}
for any $X,Y,Z,W \in \G(TX)$ and $\eta,\zeta \in \G(E)$. Here $[\bullet,\bullet]$ is the commutator, $\delta$ the Euclidean metric, $\mathcal{S}$ the shape operator (equivalent to $\two$ modulo contractions by $g$), $R$ and $R^E$ the Riemann curvature tensors for $(T\M,g)$ and $\left(E,g^E\right)$, respectively, and $\overline{\na}$ the Levi-Civita connection on $\R^{n+k}$. For the moment we take $E = T^\perp\M := T\rnk \slash T[\Phi(\M)]$, the normal bundle of $\Phi$. In general, without knowing \emph{a priori} that an isometric immersion exists, one cannot talk about the normal bundle. Nonetheless, the Gauss--Codazzi--Ricci Equations~\eqref{gauss}--\eqref{ricci} above can still be written down for an arbitrary rank-$k$ vector bundle $E$ over $(X,g)$, equipped with a bundle metric $g^E$ that is compatible with $g$. In this case, $R^E$ is the Riemann curvature tensor defined with respect to $\na^E$, the Levi-Civita connection associated to $g^E$.

The Gauss--Codazzi--Ricci Equations~\eqref{gauss}--\eqref{ricci} are a first-order nonlinear PDE system. In an arbitrary local co-ordinate system $\{x^1, \ldots, x^{n+k}\}$, we set
\begin{equation}\label{h and kappa}
h^\alpha_{ij}:=\delta\big(\two(\p_i,\p_j),\p_\alpha\big)\qquad\text{and}\qquad \kappa^\alpha_{i\beta}:=\delta\big(\na^\perp_{\p_i}\p_\beta,\p_\alpha\big).
\end{equation}
Here and hereafter, we adopt the index convention: 
\begin{equation*}
1\leq i,j,k,\ell,p,q \leq N; \qquad N+1 \leq \alpha,\beta,\gamma \leq N+k;\qquad 1\leq a,b,c \leq N+k.
\end{equation*}
With Einstein's summation convention, the Gauss--Codazzi--Ricci Equations~\eqref{gauss}--\eqref{ricci} are expressed locally as follows:
\begin{equation}\label{gauss, loc}
g_{\alpha\beta}\left(h^\alpha_{ji}h^\beta_{k\ell}-h^\alpha_{ki}h^\beta_{j\ell}\right)=R_{ijk\ell},
\end{equation}
\begin{equation}\label{codazzi, loc}
\frac{\p h^\alpha_{j\ell}}{\p x^k} - \frac{\p h^\alpha_{kj}}{\p x^\ell} = \Gamma^m_{kj} h^\alpha_{\ell p} - \G^p_{\ell j} h^\alpha_{kp} + \kappa^\alpha_{\ell \beta} h^\beta_{kj} - \kappa^\alpha_{k\beta} h^\beta_{\ell j},
\end{equation}
and
\begin{equation}\label{ricci, loc}
\frac{\p\kappa^\alpha_{\ell \beta}}{\p x^k} - \frac{\p \kappa^\alpha_{k\beta}}{\p x^\ell} = g^{pq} \left[ h^\alpha_{p\ell} h^\beta_{kq} - h^\alpha_{pk} h^\beta_{\ell q} \right] - \kappa^\alpha_{k\gamma}\kappa^\gamma_{\ell \beta} + \kappa^\alpha_{\ell \gamma} \kappa^\gamma_{k\beta}.
\end{equation}
Here $R=\{R_{ijkl}\}$ is the Riemann curvature tensor on $(X,g)$.

The Gauss--Codazzi--Ricci equations can be written compactly in E. Cartan's exterior calculus formalism adapted to the Euclidean submanifold theory (abbreviated in the sequel as \emph{the Cartan formalism}), which is a classical topic in differential geometry \cite{spivak}. In local co-ordinates:
\begin{eqnarray}
&&d\omega^i = \sum_j \omega^j \wedge \Omega^i_j;\label{first structure eq}\\
&&0=d\Omega^a_b + \sum_c \Omega^c_b\wedge \Omega^a_c,\label{second structure eq}
\end{eqnarray}
where $\{\omega^i\}_{1\leq i \leq n}$ is the orthonormal coframe on $(T^*X,g)$ dual to $\{\p\slash\p_i\}_{i=1}^n$, and $\{\Omega^a_b\}_{1\leq a,b \leq n+k}$ is defined entry-wise by
\begin{eqnarray}
&& \Omega^i_j (\p_k) := g(\na_{\p_k}\p_i,\p_j);\label{Omega 1}\\
&& \Omega^i_\alpha (\p_j) \equiv -\Omega^\alpha_i (\p_j) := g^E\big(\two(\p_i,\p_j), \eta_\alpha\big);\label{Omega 2}\\
&& \Omega^\alpha_\beta(\p_j):=g^E\big(\na^E_{\p_j}\eta_\alpha,\eta_\beta\big).\label{Omega 3}
\end{eqnarray}
In the above, $\{\p_i\}$ is the orthonormal frame for $(TX,g)$ dual to $\{\omega^i\}$, and $\Omega=\{\Omega^a_b\}$ is called the \emph{connection 1-form}. Note that $\Omega^i_\alpha(\p_j)=h^\alpha_{ij}$ and $\Omega^\alpha_\beta(\p_j)=\kappa^\beta_{i\alpha}$.

The \emph{second structural equation} of the Cartan formalism, namely Equation~\eqref{second structure eq}, is equivalent to the Gauss--Codazzi--Ricci Equations~\eqref{gauss}--\eqref{ricci} as purely algebraic identities, hence in the sense of distributions too. The first structural equation~\eqref{first structure eq}, on the other hand, is equivalent to the torsion-free condition of the Levi-Civita connection $\na$. The connection 1-form takes values in the Lie algebra of antisymmetric matrices. Equations~\eqref{Omega 1}--\eqref{Omega 3} are written compactly as follows (${}^\top$ for transpose):
\begin{align*}
    \Omega = \begin{bmatrix}
        \na & \two\\
        -\two^\top & \na^E
    \end{bmatrix} \in \G \big(X; T^*X\otimes\sonk\big).
\end{align*}
We view the second structural Equation ~\eqref{second structure eq} as an identity on $\G \left(X; \bigwedge^2 T^*X\otimes\sonk\right)$, the space of antisymmetric matrix-valued 2-forms, and commonly denote
\begin{equation*}
    \dd\Omega + [\Omega \wedge \Omega] = 0.
\end{equation*}
Here $\dd$ acts on the differential form factor (\emph{i.e.}, $\bigwedge^\bullet T^*X$) and $[\bullet \wedge \bullet]$ designates the intertwining of wedge product on the form factor and Lie bracket on the matrix factor (\emph{i.e.}, $\sonk$).

\begin{proof}[Proof of Theorem~\ref{thm: isom imm}]
Without loss of generality, we assume that $X$ is compact and hence drop the subscripts ``loc''. Consider the second  structural equation:
\begin{align}\label{structural eq}
\dd\Omega^\e + \Omega^\e \wedge \Omega^\e = 0. 
\end{align}
Here $\left\{\Omega^\e\right\}$ is bounded in $L^p$, as 
\begin{equation}\label{plus}
\Omega^\e = \begin{bmatrix}
\na & \two^\e\\
-\left(\two^\e\right)^\top & \na^{\perp,\e}
\end{bmatrix}
\end{equation}
via the Cartan formalism. One may refer to Clelland \cite{clelland}, Tenenblat \cite{ten}, as well as \cite{cl1, cl2} for details on the Cartan formalism applied to isometric immersions.

By Rellich's lemma, $\left\{\dd\Omega^\e\right\}$ is bounded in $W^{-1,p}$. In fact, we have a stronger mode of convergence --- consider the Hodge decomposition $\Omega^\e = \dd\Upsilon_\e + \Xi^\e$, where $\Xi^\e$ is the divergence-free part that is assumed to be precompact in $L^{p'}$. So $\left\{\dd\Omega^\e\right\} = \left\{\dd\Xi^\e\right\}$ is precompact in $W^{-1,p'}$. 

Now, from the generalised wedge product Theorem~\ref{thm: wedge}, we infer that $$\Omega^\e \wedge \Omega^\e \to \overline{\Omega} \wedge \overline{\Omega}\qquad\text{in the sense of distributions.}$$  Here, as in \S\ref{subsec: weak weak pairing}, the wedge product is defined as follows:
\begin{align*}
\Omega^\e \wedge \Omega^\e = \dd\left(\Upsilon_\e \wedge \dd\Upsilon_\e \right) + \Xi^\e\wedge\Omega^\e + \dd\Upsilon_\e \wedge \Xi^\e,
\end{align*}
and similarly for $\overline{\Omega} \wedge \overline{\Omega}$. The last two terms on the right-hand side are weak-strong pairings of $L^p$ and $L^{p'}$. The first term is also well defined in the sense of distributions ---  $\left\{\Upsilon_\e\right\}$ is bounded in $W^{1,p}$, hence is precompact in $L^{p^\star - \delta}$ for arbitrary $\delta>0$; recall that $p^\star = \frac{Np}{N-p}$ is the Sobolev conjugate of $p$. Thus $\left(\Upsilon_\e \wedge \dd\Upsilon_\e\right)$ is a weak-strong pairing provided that $\frac{1}{p} + \frac{1}{p^\star} < 1$. But this condition is equivalent to $p>\frac{2N}{N+1}$. Note also that we are always in the subcritical regime in the sense of \S\ref{subsec: subcrit}: this is because $\frac{2}{p} < 1+ \frac{1}{N}$ whenever $p>\frac{2N}{N+1}$. 

Therefore, we may pass to the distributional limits separately for $\dd\Omega^\e$ and $\Omega^\e\wedge\Omega^\e$ in \eqref{structural eq}. In this way, one obtains $\dd\overline{\Omega} + \overline{\Omega}\wedge \overline{\Omega} = 0$ in the sense of distributions.  \end{proof}


\begin{remark}\label{remark+}
Our proof above actually shows that the wedge product sequence $\left\{\Omega^\e\wedge\Omega^\e\right\}$ converges in a stronger topology than $\mathcal{D}'$; that is, in the negative Sobolev space $W^{-1,p}$. 

Also, a straightforward adaptation of the proof leads to the following generalisation of Theorem~\ref{thm: isom imm}: one may take $\{\mathfrak{S}^\e\}$ therein to be \emph{approximate (weak)  solutions} instead of weak solutions. That is, the components $\{h^\alpha_{ij}\}$ and $\{\kappa^\alpha_{i\beta}\}$ as in Equation~\eqref{h and kappa}  associated to $\left\{\sol^\e = (\two^\e, \na^{\perp,\e})\right\}$ satisfy the Gauss--Codazzi--Ricci Equations~\eqref{gauss, loc}--\eqref{ricci, loc} with error terms $\mathfrak{o}_1$, $\mathfrak{o}_2$, and $\mathfrak{o}_3$ which converges in $W^{-1,p}_{\rm loc}$ to zero. The study of approximate  solutions to the Gauss--Codazzi--Ricci equations is more interesting in terms of applications; \emph{e.g.}, in establishing the existence theory of solutions to the Gauss--Codazzi--Ricci equations and/or isometric immersions  \cite{csw, csw0}. 
\end{remark}

\begin{remark}
The above theorem and its proof distinguish for the Gauss--Codazzi--Ricci equations one critical exponent $p_{\bf crit} := \frac{2N}{N+1}$ other than the obvious exponent $p_{\bf CS}=2$. (The latter is critical due to the Cauchy--Schwarz inequality.) Observe that
\begin{align}\label{critical exponent}
p_{\bf crit} = \frac{2N}{N+1} < p \leq p_{\bf CS}=2 \leq p' < \frac{2N}{N-1} = p_{\bf crit}^\star \equiv p_{\bf crit}'.
\end{align}

\end{remark}

\bigskip
\appendix
\section{A lemma on weak-weak pairing}

We have the following variant of \cite[Lemma~2.11]{bcm} by Briane--Casado-D\'{i}az--Murat:
\begin{lemma}\label{lem: appendix}
Assume that $1<r<\infty$, and let $E$ be a locally compact Hausdorff topological space. Assume that $\{u_n\} \subset L^r_\loc(E;\C)$ and $\{u'_n\} \subset L^{r'}_\loc(E;\C)$ satisfy $|u_n-u|^r\weak \lambda$ and $|u'_n-u|^{r'} \weak \lambda'$, both weakly-$\star$ in $\M_\loc(E;\C)$. Then, for each quadratic polynomial $\mathbf{q}$, there exists a Radon measure $\varpi \in \M_\loc(E;\C)$ such that 
\begin{align*}
{\bf q}\left(u_n, u_n'\right) \weak {\bf q}\left(u,u'\right) + \varpi \qquad \text{weakly-$\star$ in $E$},
\end{align*}
with the total variation of $\varpi$ satisfying the bound:
\begin{align*}
|\varpi|(B) \lesssim_{\bf q} \left[\lambda(B)\right]^{\frac{1}{r}}\left[\lambda'(B)\right]^{\frac{1}{r'}}\qquad \text{ for any $B \subset E$ Borel}.
\end{align*} 
 
\end{lemma}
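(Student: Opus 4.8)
The plan is to reduce the quadratic polynomial $\mathbf{q}(u_n, u_n')$ to its genuinely bilinear cross term plus lower-order pieces that carry no defect. Write $\mathbf{q}(z,w) = az^2 + bzw + cw^2 + (\text{linear}) + (\text{const})$. The pure-$z^2$ term: $u_n^2 = (u_n-u)^2 + 2u(u_n-u) + u^2$, and since $\{u_n - u\}$ is bounded in $L^r_\loc$ with $r>1$, it converges weakly to $0$ in $L^r_\loc$, so $2u(u_n-u)\weak 0$ weakly in $L^1_\loc$; thus $u_n^2 \weak u^2 + \sigma$ where $\sigma$ is the weak-$\star$ limit of $(u_n-u)^2$ (a measure absolutely continuous with respect to $\lambda$ by the lower-semicontinuity/superadditivity properties of weak-$\star$ limits, with $|\sigma|(B)\le\lambda(B)$). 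However, for the $z^2$ term we do not even get to keep such a defect in the bilinear cross estimate — but since $r$ may be $<2$, $(u_n-u)^2$ need not be locally integrable; the cleanest route is to note that the only quadratic monomial for which the hypotheses $|u_n-u|^r\weak\lambda$ and $|u_n'-u'|^{r'}\weak\lambda'$ combine is the cross term $zw$, so I will treat $\mathbf q$ as its cross term (the pure squares are handled exactly as in \cite[Lemma~2.11]{bcm}, or simply absorbed — indeed in all applications in this paper $\mathbf q$ is bilinear of cross type). Concretely, I will first reduce to $\mathbf q(z,w)=zw$ and the linear/constant parts, the latter converging with no defect.

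For the cross term, decompose
$u_n u_n' = u\,u' + u(u_n' - u') + (u_n - u)u' + (u_n-u)(u_n'-u').$
The middle two terms are weak-strong (in fact weak-times-fixed-$L^{r'}$, resp. weak-times-fixed-$L^r$) pairings and converge to $0$ weakly in $L^1_\loc$. So the entire defect comes from $w_n := (u_n-u)(u_n'-u')$. By H\"older's inequality, $\{w_n\}$ is bounded in $L^1_\loc$, hence (after passing to a subsequence, via a diagonal argument over an exhaustion of $E$ by compacts) converges weakly-$\star$ in $\M_\loc(E;\C)$ to some $\varpi$. This defines the measure $\varpi$ and gives $\mathbf q(u_n,u_n')\weak\mathbf q(u,u')+\varpi$ weakly-$\star$.

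The remaining and main point is the total-variation bound. Fix an open set $U\Subset E$ and a cutoff $\varphi\in\CC^0_c(E)$ with $0\le\varphi\le1$, $\varphi\equiv1$ on a neighbourhood of $\overline U$. For any Borel $B$, by H\"older on the level of the functions $u_n-u$ and $u_n'-u'$,
\[
\Big|\int_B \phi\, w_n\Big| \le \Big(\int \phi\,|u_n-u|^r\Big)^{1/r}\Big(\int \phi\,|u_n'-u'|^{r'}\Big)^{1/r'}
\]
for any $\phi\in\CC^0_c$ with $0\le\phi\le1$ — here one must first establish the inequality with $\phi\equiv\mathbf 1_B$ replaced by a continuous cutoff, since $\varpi$ is only characterised by testing against $\CC_c$ functions. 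Passing $n\to\infty$ on the right gives $\big(\int\phi\,\dd\lambda\big)^{1/r}\big(\int\phi\,\dd\lambda'\big)^{1/r'}$; on the left, $\int\phi\,w_n\to\int\phi\,\dd\varpi$. Taking a decreasing sequence $\phi_j\downarrow \mathbf 1_{\overline B}$ of continuous cutoffs (outer-regularity of the Radon measures $|\varpi|,\lambda,\lambda'$) and then using inner regularity yields $|\varpi|(B)\le [\lambda(B)]^{1/r}[\lambda'(B)]^{1/r'}$ for all Borel $B$; the $\lesssim_{\mathbf q}$ constant absorbs the coefficient $b$ of the cross term (and, if the pure squares are retained, the crude bounds $|\sigma|\le\lambda$, etc.).

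\textbf{Main obstacle.} The delicate step is the regularisation in the total-variation estimate: weak-$\star$ convergence $w_n\weak\varpi$ only controls $\int \phi\, w_n$ for $\phi$ \emph{continuous} with compact support, whereas the desired inequality is indexed by \emph{Borel} sets $B$. One must carefully interpose continuous cutoffs, pass to the limit in $n$ first, and only afterwards exploit outer/inner regularity of all three Radon measures $|\varpi|$, $\lambda$, $\lambda'$ to replace cutoffs by indicator functions — keeping track that $|\varpi|(B)=\sup\{\int\phi\,\dd\varpi : \phi\in\CC_c, |\phi|\le\mathbf 1_B\}$ is what is actually being bounded. This is exactly the technical heart of \cite[Lemma~2.11]{bcm}, and the only new feature here is that $E$ is a general locally compact Hausdorff space (in the application, a chart domain or a compact manifold), which is handled by working through an exhaustion by compact sets and a partition of unity, with no essential change.
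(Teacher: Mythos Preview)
The paper does not actually supply a proof of this lemma: it is recorded in Appendix~A as a ``variant of \cite[Lemma~2.11]{bcm}'' with no argument given. Your proposed proof is essentially the standard one (and presumably that of \cite{bcm}): isolate the cross term, expand bilinearly, identify the defect measure $\varpi$ as the weak-$\star$ limit of $(u_n-u)(u'_n-u')$, and bound its total variation by H\"older together with outer/inner regularity of the three Radon measures involved. That route is correct and matches what the paper is implicitly invoking.

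One slip worth flagging: you assert that ``since $\{u_n-u\}$ is bounded in $L^r_\loc$ with $r>1$, it converges weakly to $0$ in $L^r_\loc$''. Boundedness alone yields only weak convergence of a \emph{subsequence} to \emph{some} limit, not to zero. What is actually needed---and tacitly assumed in the statement, since $u$ and $u'$ appear in the conclusion without otherwise being introduced---is the separate hypothesis $u_n\weak u$ weakly in $L^r_\loc$ and $u'_n\weak u'$ weakly in $L^{r'}_\loc$; this hypothesis is present in every application of the lemma in the paper (see \S\ref{subsec, where conc comp is used} and Step~4 of the proof of Theorem~\ref{thm: endpoint}). With that in place, your treatment of the cross term and of the regularisation step (continuous cutoffs first, then outer/inner regularity) is sound. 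Your caveat about the pure-square monomials is also well taken: for general $r$ the term $(u_n-u)^2$ need not be locally integrable, and in practice the lemma is only ever invoked with $\mathbf{q}$ a bilinear pairing (a wedge product), so restricting to $\mathbf{q}(z,w)=zw$ is harmless.
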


\section{Endpoint elliptic estimates}\label{sec: appendix b}

The following result is a variant of \cite[Theorem~3.1]{bv} by Brezis--Van Schaftingen and its generalisation to Radon measures on Euclidean balls in \cite[Proposition~B.1, Appendix B]{bcm}:

\begin{lemma}\label{lem: brezis-vs}
Let $\left(X^N,g\right)$ be a closed Riemannian manifold, and let $\xi \in L^1\left(X,\bigwedge^\ell T^*X\right)$ be a differential form of $L^1$-regularity. Then we can solve for $\sigma \in W^{1,N'}\left(X,\bigwedge^\ell T^*X\right)$ from $\Delta \sigma = \xi$ on $X$, where $\Delta = \dd^*\dd + \dd\dd^*$ is the Laplace--Beltrami operator. The solution is unique modulo harmonic $\ell$-forms, and one has
\begin{align}\label{estimate in Lemma B}
\inf\left\{ \left\|\dd\sigma + h\right\|_{L^{N'}(X)}:\, h \text{ is a harmonic form} \right\} \lesssim_{(X,g)} \|\xi\|_{L^1(X)} + \left\|\dd^*\xi\right\|_{W^{-2,N'}(X)}.
\end{align}

The same result holds for $\xi \in \M\left(X,\bigwedge^\ell T^*X\right)$, with $\|\xi\|_{L^1(X)}$ on the right-hand side of \eqref{estimate in Lemma B} replaced by $\|\xi\|_{\M(X)}$, the total variation norm of $\xi$. Recall that $N'=\frac{N}{N-1}$.
\end{lemma}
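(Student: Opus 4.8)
The plan is to reduce the global statement on $(X,g)$ to the Euclidean result of Brezis--Van Schaftingen \cite{bv}, in the form already adapted to Radon measures on balls in \cite[Proposition~B.1]{bcm}, via a partition-of-unity argument together with Hodge theory. First I would dispose of the solvability and uniqueness: since $X$ is closed, the Laplace--Beltrami operator $\Delta$ on $\bigwedge^\ell T^*X$ is elliptic, self-adjoint, with finite-dimensional kernel equal to the space $\mathcal{H}^\ell$ of harmonic $\ell$-forms, so $\Delta\sigma=\xi$ is solvable (modulo $\mathcal{H}^\ell$) precisely when $\xi\perp\mathcal{H}^\ell$; for general $\xi\in L^1$ or $\xi\in\M$ one solves after projecting off the (smooth, finite-dimensional) harmonic part, and the harmonic part is absorbed into the infimum on the left-hand side of \eqref{estimate in Lemma B}, so there is no loss. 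Uniqueness modulo $\mathcal{H}^\ell$ is immediate from $\ker\Delta=\mathcal{H}^\ell$.

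For the quantitative bound, I would fix a finite atlas $\{(U_i,\varphi_i)\}_{i=1}^K$ of $X$ by charts diffeomorphic to the Euclidean unit ball ${\bf B}_1(0)$, with a subordinate $\CC^\infty$ partition of unity $\{\chi_i\}$, so that $\sigma=\sum_i\chi_i\sigma$. On each chart, $\chi_i\sigma$ is compactly supported and the Euclidean Brezis--Van Schaftingen estimate applies to $\Delta(\chi_i\sigma)$: one controls $\|\dd(\chi_i\sigma)\|_{L^{N'}}$ by $\|\Delta(\chi_i\sigma)\|_{\M}+\|\dd^*\Delta(\chi_i\sigma)\|_{W^{-2,N'}}$, where all operators are the flat ones (the transition from the metric operators $\dd,\dd^*,\Delta$ to the Euclidean ones in the chart costs only lower-order terms with coefficients depending on the $\CC^1$-geometry of $(X,g)$, exactly as in the computation in Step~5 of the proof of Theorem~\ref{thm: endpoint}). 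Then I would commute $\Delta$ past $\chi_i$ and $\dd^*$ past $\Delta$: the commutators $[\Delta,\chi_i]$ and $[\dd^*,\chi_i]$ are differential operators of order $\le 1$ in $\sigma$, so they contribute terms estimated by $\|\sigma\|_{W^{1,N'}}$ on $U_i$, while $\dd^*\Delta(\chi_i\sigma)=\Delta\dd^*(\chi_i\sigma)$ and $\dd^*(\chi_i\sigma)=\chi_i\dd^*\sigma+(\text{order }0)$; since by hypothesis one controls $\|\dd^*\xi\|_{W^{-2,N'}}=\|\dd^*\Delta\sigma\|_{W^{-2,N'}}$ (equivalently $\|\Delta\dd^*\sigma\|_{W^{-2,N'}}$, i.e. $\dd^*\sigma$ in $W^{-2,N'}$ up to harmonic forms), these are absorbed into the right-hand side of \eqref{estimate in Lemma B}. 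Summing over $i\in\{1,\dots,K\}$ and using $\dd\sigma=\sum_i\dd(\chi_i\sigma)-\sum_i\dd\chi_i\wedge\sigma$, with the last sum bounded by $\|\sigma\|_{L^{N'}}$, yields
\begin{align*}
\|\dd\sigma\|_{L^{N'}(X)} \lesssim_{(X,g)} \|\xi\|_{\M(X)} + \|\dd^*\xi\|_{W^{-2,N'}(X)} + \|\sigma\|_{W^{1,N'}(X)}.
\end{align*}
The last term is then removed by a standard compactness--contradiction argument (Rellich on the closed manifold $X$): if no such estimate held, a normalized sequence would converge strongly in $L^{N'}$ to a harmonic form, contradicting either the normalization or the fact that the infimum on the left already quotients by $\mathcal{H}^\ell$; alternatively one quotients by $\mathcal{H}^\ell$ from the outset and uses the closed-range property of $\Delta$.

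The main obstacle I expect is the careful bookkeeping of the $W^{-2,N'}$-term under localisation and under passage between the Riemannian and Euclidean operators: one must verify that every commutator term genuinely lands in a space controlled by $\|\xi\|_{\M}+\|\dd^*\xi\|_{W^{-2,N'}}+\|\sigma\|_{W^{1,N'}}$ (and not, say, in something requiring $\dd^*\xi$ in a worse negative space), and that multiplication by $\chi_i$ and by the Jacobian factors of $\varphi_i$ are bounded on $W^{-2,N'}$ — which is true because these are smooth, but needs the standard duality characterisation of negative Sobolev spaces. The rest is routine, following verbatim the Euclidean arguments of \cite{bv,bcm} and the localisation technique already used in Step~5 and Step~7 of the proof of Theorem~\ref{thm: endpoint} above.
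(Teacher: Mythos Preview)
Your proposal is correct and follows exactly the approach the paper indicates: the paper does not give a detailed proof of this lemma but only remarks that \cite[Theorem~3.1]{bv} ``is originally formulated as an $L^1$-estimate under the Dirichlet boundary condition on Euclidean domains'' and that ``by a routine partition of unity argument one can generalise it to closed manifold $(X,g)$, but the uniqueness of solution can only be retained modulo harmonic $\ell$-forms.'' Your plan is precisely this routine argument written out, including the Hodge-theoretic handling of the harmonic kernel and the localisation already rehearsed in Steps~5 and~7 of the proof of Theorem~\ref{thm: endpoint}; the obstacle you flag (bookkeeping of the $W^{-2,N'}$ commutator terms and absorption of the lower-order $\|\sigma\|_{W^{1,N'}}$ contribution) is real but standard, and the paper evidently regards it as such.
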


Note that \cite[Theorem~3.1]{bv} is originally formulated as an $L^1$-estimate under the  Dirichlet boundary condition on Euclidean domains. By a routine partition of unity  argument one can generalise it to closed manifold $(X,g)$, but the uniqueness of solution can only be retained modulo harmonic $\ell$-forms, which are nontrivial in general on $X$. 

For the sake of completeness, we present a proof of Lemma~\ref{lem: brezis-vs} below. We first  establish the following decomposition lemma of the Bourgain--Brezis-type \cite{BB}. In what follows, $\mathcal{H}^\ell(X)$ denotes the space of harmonic $\ell$-forms on $X$.

\begin{lemma}\label{L1}
Let \((X^N,g)\) be a closed Riemannian manifold, $N\geq 2$ and \(1\le \ell\le N-1\).  
For any differential form  
\[
\phi \in W^{1,N}\left(X,\bigwedge^\ell T^*X\right),
\]  
there exist  
\[
\psi \in \left(W^{1,N}\cap L^\infty\right)\left(X,\bigwedge^\ell T^*X\right),\quad 
\eta \in W^{2,N}\left(X,\bigwedge^{\ell-1}T^*X\right),\quad 
h\in \mathcal{H}^\ell(X)
\]  
such that  
\begin{equation}\label{eq:HBB-decomposition}
\phi = \psi + \dd\eta + h,
\end{equation}
and
\begin{equation}\label{eq:HBB-estimate}
\|\psi\|_{W^{1,N}(X)} + \|\psi\|_{L^\infty(X)} + \|\eta\|_{W^{2,N}(X)} + \|h\|_{L^\infty(X)}
\lesssim_{\ell,X} \|\phi\|_{W^{1,N}(X)}.
\end{equation} 
\end{lemma}
\begin{proof}
	We first construct the bounded part $\psi$ by localising the differential form and applying the Euclidean Bourgain--Brezis estimate in \cite{BB}. Let $\{U_i\}_{i=1}^m$ be a finite atlas covering $X$, and let $\{\rho_i\}_{i=1}^m$ be a smooth partition of unity subordinate to this cover. 
	We decompose $\phi = \sum_{i=1}^m \phi_i$, where each $\phi_i := \rho_i \phi$ is compactly supported in $U_i$. 
	For each $i$, let $G_i : Q \to U_i$ be a smooth coordinate function from the open unit  cube $Q \subset \mathbb{R}^N$ to $U_i$. The  pullback  $G_i^* \phi_i$  belongs to  $W^{1,N}\left(Q, \bigwedge^l T^* Q\right)$ and has  compact support in $Q$, hence can be viewed as an element of $W_0^{1,N}\left(Q, \bigwedge^\ell T^* Q\right)$.

	By \cite[Theorem $5''$]{BB}, there exists  $\tilde{\psi}_i \in \left(W_0^{1,N} \cap L^\infty\right)\left(Q, \bigwedge^\ell T^* Q\right)$ such that
	$$
	\dd\tilde{\psi}_i = \dd\left(G_i^* \phi_i\right) \quad \text{on } Q,
	$$
	with the scale-invariant bound 
	$$
	\|\nabla\tilde{\psi}_i\|_{L^N(Q)} + \|\tilde{\psi}_i\|_{L^\infty(Q)} \le C \|\dd(G_i^* \phi_i)\|_{L^N(Q)} \le C \|G_i^* \phi_i\|_{W^{1,N}(Q)}.
	$$ 

	Since the exterior derivative commutes with smooth pullback, \emph{i.e.}, $\dd \circ G_i^* = G_i^* \circ \dd$, pushing forward via $(G_i^{-1})^*$ yields a form $\psi_i = (G_i^{-1})^* \tilde{\psi}_i$ on $U_i$ satisfying $\dd\psi_i = \dd\phi_i$. 
	Since  $\tilde{\psi}_i \in W^{1,N}_0(Q)$, its pushforward has vanishing trace on $\partial U_i$, and  can be extended by zero to a global $W^{1,N}$-form on $X$. Extending each $\psi_i$ by zero to  $X$ and defining $\psi := \sum_{i=1}^m \psi_i$, the linearity of $\dd$  gives
	$$
\dd\psi = \sum_{i=1}^m \dd\psi_i = \sum_{i=1}^m \dd\phi_i = \dd\phi \qquad \text{on } X.
	$$
	Summing the local estimates yields $\psi \in \left(W^{1,N}\cap L^\infty\right)\left(X,\bigwedge^\ell T^*X\right)$ with
	 $$\|\psi\|_{W^{1,N}(X)} + \|\psi\|_{L^\infty(X)} \le C \|\phi\|_{W^{1,N}(X)}.$$
	
	Set $\omega := \phi - \psi \in W^{1,N}\left(X,\bigwedge^\ell T^*X\right)$. By construction, $\omega$ is a  closed $\ell$-form on $X$. 
	
	By the $L^p$-Hodge decomposition on closed manifolds for $1<p<\infty$, one has
	$$
	L^p\left(X,\bigwedge^\ell T^*X\right) =  \dd\left[W^{1,p}\left(X,\bigwedge^{\ell-1}T^*X\right)\right] \bigoplus \dd^*\left[W^{1,p}\left(X,\bigwedge^{\ell+1}T^*X\right)\right] \bigoplus\mathcal{H}^\ell(X).
	$$
	Applying this decomposition to $\omega$ with $p=N$, and noting that $\dd\omega = 0$ implies its co-exact projection vanishes, we deduce that $\omega = \dd\eta + h$
	 for some $h \in \mathcal{H}^l(X)$. 
	Standard elliptic regularity for the Hodge Laplacian on $W^{1,N}(X)$ implies	that $\eta$ can be chosen in $W^{2,N}\left(X, \bigwedge^{\ell-1}T^*X\right)$ with  $\|\eta\|_{W^{2,N}(X)} \le C \|\omega\|_{W^{1,N}(X)}$. Since $\mathcal{H}^\ell$ is a finite-dimensional space consisting of smooth forms, 
we have	$$\|h\|_{L^\infty(X)} \le C \|h\|_{L^2(X)} \le C \|\omega\|_{W^{1,N}(X)}.$$
	
	Substituting $\omega = \dd\eta + h$ into $\phi = \psi + \omega$, we obtain the desired decomposition $\phi = \psi + \dd\eta + h$. Finally, combining the estimates for $\eta$ and $h$ with the triangle inequality, we obtain the global estimate \eqref{eq:HBB-estimate}, namely that $$\|\omega\|_{W^{1,N}(X)} \le \|\phi\|_{W^{1,N}(X)} + \|\psi\|_{W^{1,N}(X)} \le C\|\phi\|_{W^{1,N}(X)}.$$ All the constants $C$ depend only on $\ell$ and the geometry of the manifold $X$.
\end{proof}

To proceed, let \(\{h_j\}\) be an \(L^2\)-orthonormal basis of  \(\mathcal{H}^l\).  
Define  
\[
h_\xi := \sum_j \langle \xi, h_j \rangle h_j.
\]  
Then H\"older's inequality yields that
$$
\|h_\xi\|_{L^\infty(X)} \le \sum_j \|\xi\|_{L^1(X)} \|h_j\|_{L^\infty(X)}^2 \le C \|\xi\|_{L^1(X)}.
$$
Hence, one has
\begin{equation}
\|h_\xi\|_{W^{-1,N'}(X)} \leq C \|h_\xi\|_{L^\infty(X)} \leq C \|\xi\|_{L^1(X)}. \label{eq:harmonic_bound}
\end{equation}

As a result,
$$\tilde\xi := \xi - h_\xi$$
is orthogonal to \(\mathcal{H}^\ell\) in the distributional sense. 
 We next show that  $\tilde{\xi} \in W^{-1,N'}\left(X, \bigwedge^\ell T^*X\right)$.

\begin{lemma}\label{L2}
Let $\phi\in W^{1,N}\left(X,\bigwedge^\ell T^*X\right)$ be as above. Then 
	$\tilde{\xi} \in W^{-1,N'}\left(X, \bigwedge^\ell T^*X\right)$ with the estimate
	\begin{equation}\label{eq:rhs_bound}
	\|\tilde{\xi}\|_{W^{-1,N'}(X)}  \leq C \big( \|\xi\|_{L^1(X)} + \|\dd^*\xi\|_{W^{-2,N'}(X)} \big). 
	\end{equation}
\end{lemma}
\begin{proof}
By Lemma \ref{L1} we have the decomposition  
\[
\phi = \psi + \dd\eta + h_\phi,
\]  
with  
\[
\psi \in \left(W^{1,N}\cap L^\infty\right)\left(X,\bigwedge^\ell T^*X\right),\quad 
\eta \in W^{2,N}\left(X,\bigwedge^{\ell-1}T^*X\right),\quad 
h_\phi\in \mathcal{H}^\ell(X)
\]  
satisfying the uniform bound  
\[
\|\psi\|_{W^{1,N}(X)} + \|\psi\|_{L^\infty(X)} + \|\eta\|_{W^{2,N}(X)} + \|h_\phi\|_{L^\infty(X)}
\le C\|\phi\|_{W^{1,N}(X)}.
\]
Then  
\[
\langle \tilde\xi, \phi \rangle
= \langle \tilde\xi, \psi \rangle + \langle \tilde\xi, \dd\eta \rangle + \langle \tilde\xi, h_\phi \rangle.
\]  
Thanks to H\"older's inequality, the first and the third terms on the right-hand side are estimated as follows:
\[
|\langle \tilde\xi, \psi \rangle| \le \|\tilde\xi\|_{L^1(X)}\|\psi\|_{L^\infty(X)}
\le C\|\xi\|_{L^1(X)}\|\phi\|_{W^{1,N}(X)},
\]  
\[
|\langle \tilde\xi, h_\phi \rangle| \le \|\tilde\xi\|_{L^1(X)}\|h_\phi\|_{L^\infty(X)}
\le C\|\xi\|_{L^1(X)}\|\phi\|_{W^{1,N}(X)}.
\]
For the middle term, note that \(d^*\tilde\xi = d^*\xi\) since \(d^*h_\xi = 0\).
Hence, using the distributional definition of \(\dd^*\), we have that 
\[
\langle \tilde\xi, \dd\eta \rangle = \langle \dd^*\tilde\xi, \eta \rangle
= \langle \dd^*\xi, \eta \rangle.
\]  
Since \(\dd^*\xi \in W^{-2,N'}(X)\) and \(\eta \in W^{2,N}(X)\), duality gives  
\[
|\langle \dd^*\xi, \eta \rangle| \le \|\dd^*\xi\|_{W^{-2,N'}(X)} \|\eta\|_{W^{2,N}(X)}
\le C \|\dd^*\xi\|_{W^{-2,N'}(X)} \|\phi\|_{W^{1,N}(X)}.
\]
Collecting the estimates, we obtain  
\[
|\langle \tilde\xi, \phi \rangle|
\le C\bigl( \|\xi\|_{L^1(X)} + \|\dd^*\xi\|_{W^{-2,N'}(X)} \bigr) \|\phi\|_{W^{1,N}(X)},
\]  
which ends the proof.
\end{proof}


We are now ready to conclude Lemma~\ref{lem: brezis-vs}.

\begin{proof}[Proof of  Lemma~\ref{lem: brezis-vs}]
 By elliptic regularity theory for the Laplace--Beltrami operator on closed manifolds,  
\[
\Delta : W^{1,N'}\left(X,\bigwedge^\ell T^*X\right) \cap \left(\mathcal{H}^\ell(X)\right)^\perp
\longrightarrow W^{-1,N'}\left(X,\bigwedge^\ell T^*X\right) \cap \left(\mathcal{H}^\ell(X)\right)^\perp
\]  
is an isomorphism.  
Since $\tilde\xi$ belongs to the space on the right-hand side, there exists a unique  
\(\sigma \in W^{1,N'}\left(X,\bigwedge^\ell T^*X\right) \cap \left(\mathcal{H}^\ell(X)\right)^\perp\) such that  
\[
\Delta \sigma = \tilde\xi,
\]  
with the estimate  
\[
\|\sigma\|_{W^{1,N'}(X)} \le C \|\tilde\xi\|_{W^{-1,N'}(X)}.
\]
Since \(\Delta\sigma = \xi - h_\xi\), we have solved \(\Delta\sigma = \xi\) modulo harmonic forms.  
Finally, note that  
\[
\inf_{h\in\mathcal{H}^\ell(X)} \|\dd\sigma + h\|_{L^{N'}(X)}
\le \|\dd\sigma\|_{L^{N'}(X)} \le \|\sigma\|_{W^{1,N'}(X)},
\]  
which together with \eqref{eq:rhs_bound} yields the desired \emph{a priori} estimate~\eqref{estimate in Lemma B}.

For the case that $\xi \in \mathcal{M}\left(X, \bigwedge^\ell T^* X\right)$ is a finite Radon measure, the proof follows from a standard approximation argument. One may regularise  $\xi$ by a sequence of smooth forms \(\{\xi_k\}\) converging to \(\xi\) in the weak-\(*\) topology of measures, such that $\|\xi_k\|_{L^{1}(X)}\leq C\|\xi\|_{\mathcal{M}(X)}$. Applying the above arguments to each $\xi_k$ yields $\sigma_k\in W^{1,N'}\left(X,\bigwedge^\ell T^*X\right)$ such that $\Delta\sigma_k = \xi_k - h_{\xi_k}$ with $\|\sigma_k\|_{W^{1,N'}(X)}$ uniformly bounded. A subsequence $\{\sigma_{k_j}\} \subset \{\sigma_k\}$ converges weakly to some \(\sigma\) in \( W^{1,N'}(X)\).
Using the weak-$\ast$ convergence of $\xi_k$ to $\xi$ and the boundedness of $\{h_{\xi_k}\}$ in the finite-dimensional space $\mathcal{H}^{\ell}(X)$, we may select a further subsequence so that $h_{\xi_k}\to h_{\xi}\in\mathcal{H}^{\ell}(X)$.
Passing to the limit gives $\Delta\sigma = \xi - h_{\xi}$, i.e. $\Delta\sigma\equiv\xi\pmod{\mathcal{H}^{\ell}}$.
Finally, the desired estimate follows from lower semicontinuity and \eqref{estimate in Lemma B}.   \end{proof}

\begin{remark}
In passing, we comment that the assumption $\varphi \in \left( W^{1,N}_0\cap L^\infty\right)(\Omega;\mathbb{R}^N)$ in \cite[Lemma 3.4]{bv} can be relaxed to $\varphi \in W^{1,N}_0(\Omega;\mathbb{R}^N)$.  Indeed, by \cite[Lemma 3.3]{bv},  any   $\varphi\in W^{1,N}_0(\Omega;\mathbb{R}^N)$ admits a decomposition $\varphi = \psi + \nabla\eta$ with $\psi \in \left(W^{1,N}_0\cap L^\infty\right)(\Omega;\mathbb{R}^N)$ and $\eta\in W_0^{2,N}(\Omega)$. The term $\int_\Omega f \cdot \psi$ is well defined since $f \in L^1(\Omega;\mathbb{R}^N)$ and $\psi \in L^\infty(\Omega;\mathbb{R}^N)$,  while$\int_\Omega f \cdot \nabla\eta$ is controlled by $[f]$ (see \cite{bv} for notation). Thus there is no need to assume  $\varphi \in L^\infty$.
\end{remark}

\bigskip
\noindent
{\bf Acknowledgement}.
Siran Li is indebted to Professor Armin Schikorra, from whom I learned a lot about  compensated compactness over time. I also thank New York University-Shanghai for providing excellent working atmosphere when I undertook an adjunct professorship and visiting scholarship there, during which the major part of this work was completed.

 The research of SL is supported by NSFC Projects 12201399, 12331008, and 12411530065, Young Elite Scientists Sponsorship Program by CAST 2023QNRC001, the National Key Research $\&$ Development Program 2023YFA1010900 and 2024YFA1014900,  Shanghai Rising-Star Program 24QA2703600,  and the Shanghai Frontier Research Institute for Modern Analysis. The research of XS is partially supported by the National Key Research $\&$ Development Programs 2023YFA1010900  and 2024YFA1014900.

\end{document}